\theoremstyle{plain}
\theoremstyle{plain}\newtheorem{assumption}{Assumption}
\crefname{assumption}{Assumption}{Assumptions}
\newtheorem{theorem}{Theorem}[section]
\newtheorem{lemma}[theorem]{Lemma}
\newtheorem{corollary}[theorem]{Corollary}
\newtheorem{proposition}[theorem]{Proposition}
\newtheorem{remark}[theorem]{Remark}
\newtheorem{definition}{Definition}
\newcommand{\E}{\mathbb{E}}
\newcommand{\R}{\mathbb{R}}
\newcommand{\N}{\mathbb{N}}
\renewcommand{\P}{\mathbb{P}}
\newcommand{\change}{\color{black}}
\newcommand{\rev}{\color{black}}
\title{Fractional interacting particle system: drift parameter estimation via Malliavin calculus}
\author{Chiara Amorino\thanks{Universitat Pompeu Fabra and Barcelona School of Economics, Department of Economics and Business, Ram\'on Trias Fargas 25-27, 08005, Barcelona, Spain. Corresponding author's email: chiara.amorino@upf.edu. CA gratefully acknowledges financial support of PID2022-138268NB-I00/AEI/10.13039/501100011033.}, Ivan Nourdin\thanks{Unit\'e de Recherche en Math\'ematiques, Universit\'e du Luxembourg, Maison du Nombre, 6 avenue de la Fonte, L-4364 Esch-sur-Alzette, Grand Duchy of Luxembourg. IN's research is supported by the Luxembourg National Research Fund (Grant: O22/17372844/FraMStA).}, Radomyra Shevchenko\thanks{Universit\'e Cote d’Azur, Laboratoire J.A. Dieudonn\'e, UMR CNRS 7351, Nice, 06108, France, and Centrale Méditerranée.}}
\date{\today}
\begin{document}
\maketitle

\begin{abstract}
We address the problem of estimating the drift parameter in a system of $N$ interacting particles driven by additive fractional Brownian motion of Hurst index \( H \geq 1/2 \). Considering continuous observation of the interacting particles over a fixed interval \([0, T]\), we examine the asymptotic regime as \( N \to \infty \). Our main tool is a random variable reminiscent of the least squares estimator but unobservable due to its reliance on the Skorohod integral. We demonstrate that this object is consistent and asymptotically normal by establishing a quantitative propagation of chaos for Malliavin derivatives, which holds for any \( H \in (0,1) \). Leveraging a connection between the divergence integral and the Young integral, we construct computable estimators of the drift parameter. These estimators are shown to be consistent and asymptotically Gaussian. Finally, a numerical study highlights the strong performance of the proposed estimators. \\
\\
\noindent
 \textit{Keywords:} Fractional Brownian motion, interacting particle system, Malliavin calculus, drift parameter estimation, McKean-Vlasov equations \\ 
 
\noindent
\textit{AMS 2010 subject classifications:} Primary 62M09; secondary 60G18, 60H07, 60H10.

\end{abstract}

\tableofcontents

\section{Introduction}
Diffusion phenomena, often modeled as solutions to stochastic differential equations (SDEs), are widely studied in fields ranging from probability theory to functional analysis and differential geometry. The study of statistical properties for such models has gained prominence due to extensive applications in areas such as finance, biology \cite{46Est}, neurology \cite{29Est}, and economics \cite{11Est}, as well as classical fields like physics \cite{44Est} and mechanics \cite{36Est}. In pharmacology, SDEs model variability in biomedical experiments, as reviewed in \cite{22Wang, 21Wang, 23Wang,24Wang, Wang}.

Interacting particle systems (IPS) extend the applications mentioned for classical SDEs by modeling the dynamics of multiple interacting entities, which are crucial in applications ranging from mathematical biology and social sciences to data science and optimization. Boltzmann's seminal work \cite{24poc} introduced the kinetic theory of gases based on interacting particle system and molecular chaos, i.e. statistical independence of particles in the limit. Later, Kac \cite{104poc} established the notion of the propagation of chaos, further explored by McKean \cite{61imp} through diffusion models.

Recent decades have seen IPS applied to self-organization models in biology and social sciences \cite{2poc,59poc,132poc,134poc}, mean-field games \cite{30poc,CarDel18}, and data science, particularly in neural networks \cite{47poc,56poc,142poc,147poc}, optimization \cite{39poc,90poc,155poc}, and MCMC methods \cite{65poc}.

In the classical SDE framework, replacing standard Brownian motion with fractional Brownian motion has proven highly effective in better modeling certain real-world phenomena. Indeed, fractional Brownian motion (fBm) has emerged as a powerful tool for modeling processes driven by a long-range dependent and self-similar noise. It has been used in diverse fields, including finance \cite{Chr12, Com12, Fle11, Gra04, Ros09}, geophysics \cite{Earthquakes, Hurst}, traffic patterns \cite{Cag04, Nor95, Will}, and medical research \cite{Lai00}. These developments have naturally driven interest in statistical inference for fBm.

Naturally, one might wonder whether this extension is equally fruitful to the interacting case. It is thus fitting to enhance the interacting particle systems  by incorporating fractional noise, aiming for a closer alignment with reality. This leads to the focus of our work: IPS driven by fBm. This model offers a robust framework for capturing interactions, such as those between financial assets or neurons in neuroscience.

Recently, IPS have also been applied to opinion dynamics (see \cite{op1, op2}), modeling how interactions among individuals influence opinion shifts. This has been particularly relevant in understanding public opinion on COVID-19 vaccinations, as evidenced by several studies, including \cite{cov3, cov1, cov2}. Notably, as new interactions disseminate information with varying memory effects, triggering shifts that may be either abrupt or gradual, IPS driven by fBm appear well-suited to capture these dynamics. The Hurst parameter \( H \in (0, 1) \) governs the extent of memory in the system, making it a powerful tool for analyzing such phenomena. Building on these insights, we focus on IPS driven by fBm, which provide a richer framework for capturing memory effects and long-range dependencies in opinion dynamics.

{\change In the geophysical context, IPS-based models have recently begun to emerge for processes related to turbulent motion, such as turbulent kinetic energy (see \cite{Bossy2022}). At the same time, empirical observations suggest that non-Markovian driving noise may offer a more accurate representation of such phenomena (see \cite{Franzke2015, Lilly2017}), thereby motivating the study of IPS driven by fBm.

A further application arises in a recent study \cite{House2025}, which examines the growth of $N$ axon cells in vertebrate brains. While it is typically modeled using independent fBm paths $B^{H,i}$, the authors argue that a self-repulsion effect observed in these cells indicates the need for a more realistic modeling approach, one that accounts for interactions between particles, as in the IPS framework. A simple version of such a model is captured by the SDE
$$ dX_t^{i,N} =  \theta^0 \left(X^{i,N}_t-\frac{1}{N}\sum_{j=1}^N X^{j,N}_t\right)dt +dB^{H,i}_t,$$
where $\theta^0\neq 0$ quantifies the degree of attraction or repulsion of each particle $X^{i,N}$ relative to the empirical mean. A dedicated simulation study for this model will be presented in the article, alongside the (much more general) theoretical analysis.
}

Our aim consists in studying a drift parameter estimation problem for the following IPS, driven by a fBm
\begin{equation}
\begin{cases}
 d X_t^{\theta,i, N} =  \, \sum_{m=1}^p \theta_m b_m \big(X_t^{\theta,i, N}, \mu_t^{\theta, N} \big) dt +  \sigma d B_t^{H,i},  \qquad i = 1, ... , N, \quad t\in [0, T], \\[1.5 ex]
 \mathcal{L} \big( X_0^{{\theta,} 1, N}, ... , X_0^{{\theta,} N, N} \big) : = \mu_0 \times ... \times \mu_0.
\end{cases}
 \label{eq: model}
\end{equation}
Here the unknown parameter vector $\theta:= (\theta_1,\dots ,\theta_p)^T$ belongs to some compact and convex set $\Theta \subset \R^p$, the processes $(B^{H,i}_t)_{t \in [0, T]}$, $i=1,\dots,N$, are independent $\mathbb{R}$-valued fractional Brownian motions of Hurst index $H \in (0, 1)$, independent of the initial value $(X^{\theta,1,N}_0, \dots, X^{\theta,N,N}_0)$ of the system, $\sigma >0$ is a volatility parameter, and $\mu_t^{\theta, N}$ is the empirical measure of the system at time $t$, i.e.\
\[
\mu_t^{\theta, N} := \frac{1}{N} \sum_{i = 1}^N \delta_{X_t^{\theta, i, N}}.
\]
The drift coefficient $b=(b_1,\dots, b_p)$ consists of functions $b_m: \R \times \mathcal{P}_2(\R) \rightarrow \R$, $m=1,\dots , p$, where $\mathcal{P}_l$ denotes the set of probability measures on $\R$ with a finite $l$-th moment, endowed with the Wasserstein $l$-metric
\begin{equation}
W_l(\mu, \nu) := \Big( \inf_{m \in \Gamma (\mu, \nu)} \int_{\R^2} |x - y|^l m(dx, dy) \Big)^{\frac 1 l},
\label{eq: wass}
\end{equation}
where $\Gamma(\mu, \nu)$ denotes the set of probability measures on $\R^2$ with marginals $\mu$ and $\nu$. The observations considered in this work are given by  
\begin{equation}{\label{eq: observations}}
\big(X_{t}^{\theta,i,N}\big)_{t \in [0,T]}^{i=1, \ldots, N},
\end{equation}  
where the time horizon \( T \) is fixed, and the number of particles \( N \) tends to infinity. In this asymptotic framework, the central idea for analysing the properties of the system is the propagation of chaos phenomenon, that is, weak convergence of the measure $\mu_t^{\theta, N}$ to the measure $\bar{\mu}_t^{\theta}=\mathcal{L}(\bar{X}_t^{\theta})$ in the McKean-Vlasov SDE
\begin{equation}
d \bar{X}_t^{\theta} =  \sum_{m=1}^p \theta_m b_m \big(\bar{X}_t^{\theta}, \bar{\mu}_t^{\theta} \big) dt + \sigma d B_t^H,  \quad t\in [0, T],
\label{eq: McK}
\end{equation}
driven by an fBm $B^H$, see also Section \ref{s: poc}. This result allows for a type of averaging, similarly to the ergodic theorem in the asymptotic regime where \( N = 1 \) and \( T \to \infty \).


Even when the system is driven by Brownian noise, the literature on statistical inference for interacting particle systems remains limited. Early work, such as Kasonga’s contribution \cite{Kas}, marks the field’s inception. However, initial research primarily focused on microscopic particle systems arising from statistical physics, which were not directly observable. Recent years have seen a shift, fueled by the emergence of diverse applications (mentioned above) generating new data, sparking interest in this field among statisticians.  

Significant advancements in nonparametric and semiparametric statistical inference for these systems can be found in works like \cite{Pol, Vyt, DelHof}, and \cite{Richard}. Parameter estimation, based on observations of interacting particle systems and their associated McKean-Vlasov equations, has been investigated in various asymptotic regimes; see, for example, \cite{Amo23, Bis, GenLar1, GenLar2, Giesecke, Liu, Imp, Wen}, and references therein. Notably, most of these contributions have emerged in the past five years, reflecting the novelty and growing interest in this field.  

Despite this progress, no work currently addresses statistical inference for interacting particle systems driven by fBm — a gap our study seeks to fill. The absence of statistical results in this framework is not surprising since the probabilistic tools central to this endeavor, such as the propagation of chaos (detailed in Section \ref{s: poc}), are still under development. This motivates our investigation of this promising area, which we believe has substantial potential for future applications.  

It is important to note that statistical inference for IPS driven by standard Brownian motion heavily relies on the Markovian and semimartingale properties of the process. These properties cannot be directly extended to the fractional Brownian framework due to the non-Markovian and non-semimartingale nature of fBm, which introduces unique challenges. Several foundational contributions have been made in the context of parameter estimation for ergodic SDEs driven by fBm, which we summarize below to provide context for our study.

The most common approaches in the literature are the maximum likelihood estimator (MLE) and the least squares estimator (LSE). Notably, in the case of Brownian motion, the least squares estimator coincides with the maximum likelihood estimator; however, this equivalence no longer holds when considering fractional Brownian motion (see \cite{HuNua10, HuNuaZho19, KleLeB02, TudVie07}).

For continuous observations, Kleptsyna and Le Breton \cite{KleLeB02} proved the consistency of the MLE for a fractional Ornstein-Uhlenbeck process when \( H > 1/2 \). Later, Brouste and Kleptsyna demonstrated a central limit theorem for the MLE, which was also independently established by Bercu et al. \cite{Bercu}, using a different approach. Both studies focus on the case \( H > 1/2 \). The consistency of the MLE for any \( H \in (0, 1) \) was established in the seminal work by Tudor and Viens \cite{TudVie07}. Using Malliavin calculus, they extended their results to include a linear drift in the parameter, demonstrating the consistency for both the continuous MLE and its discrete approximation.

On the other hand, the LSE was proposed in \cite{HuNua10} for estimating the drift parameter in a fractional Ornstein-Uhlenbeck process. Consistency and asymptotic normality were proven in this setting for \( H \in [1/2, 3/4) \). These results were later extended in \cite{HuNuaZho SISP}, using a novel method that established validity for any \( H \in (0,1) \). Their approach relies on Malliavin calculus, and it is worth highlighting that the asymptotic Gaussianity of the LSE crucially depends on the process being an Ornstein-Uhlenbeck process. This specific structure allows the explicit expression of the estimator, showing that it belongs to the second Wiener chaos, thereby enabling the application of a central limit theorem for multiple stochastic integrals.

A more general setting, involving multidimensional but linear drifts, is considered in \cite{HuNuaZho19}. The authors proved the consistency of the LSE for \( H > 1/4 \). However, this LSE involves Skorohod integrals, making it impractical for computation. To address this, Neuenkirch and Tindel \cite{NeuTin} demonstrated the strong consistency of a version of LSE based on discrete observations. Their approach employs Young’s inequality from rough path theory to manage Skorohod integrals, and it is valid for \( H > 1/2 \).

An alternative approach to parameter estimation based on identifying the invariant measure was also proposed and extensively studied in \cite{PTV20,HarHu21,HarRic23}.


In this work, we focus on estimating \(\theta\) from the observations specified in \eqref{eq: observations}. At the heart of our approach lies an unobservable quantity, \(\tilde{\theta}_N\), inspired by a least-squares-type estimator. However, this quantity cannot be considered a true estimator, as it depends on the data through a Skorohod integral, rendering it inherently unobservable. Consequently, we will refer to this random variable as a 'fake-stimator' or 'estim-actor' in the sequel. To address this problem, we exploit a well-known relationship between the Skorohod integral and the Stratonovich integral, which holds for \(H > 1/2\). This allows us to construct computable estimators, as detailed in Section \ref{s: ass}.

As one might wish to compare our results with those existing in the literature, primarily for single SDEs with \(T \to \infty\), it is worth noting that, in recent studies on interacting particle systems driven by standard Brownian motion, the roles of \(T\) and \(N\) have often been observed to be analogous. This analogy facilitates nonparametric estimation of the drift function in an IPS over a fixed time horizon, as demonstrated in \cite{DelHof}, and parametric estimation in \cite{Amo23}. Other works highlighting this observation include \cite{AmoPil, Shiwi, Imp}, among others.  

In the case of IPS driven by independent Brownian motions, the interaction between particles is weak and manifests only in the coefficients, rendering the particles nearly independent. For \(H = \frac{1}{2}\), this independence makes it intuitive why the asymptotic behavior over \(T\) can be replaced by that over \(N\): observations from the second particle can be conceptually concatened onto the end of the first, and the system's Markovianity ensures the validity of such inference. However, this Markovianity (and consequently, this argument) no longer holds in our setting with \(H \neq \frac{1}{2}\). It is therefore more surprising that we can still recover information about the drift coefficient, even with observations limited to a finite time horizon.  

Our main results establish that the proposed fake-stimator for parameter estimation in the drift is both consistent and asymptotically Gaussian. Furthermore, we demonstrate that the errors introduced when transitioning from this theoretical estim-actor to the computable estimators are negligible. As a result, the desirable asymptotic properties of \(\tilde{\theta}_N\) extend to the actual estimators.

To the best of our knowledge, in all the aforementioned literature on parameter estimation for classical SDEs driven by fBm, the only results that establish a central limit theorem for the proposed estimators pertain to fractional Ornstein-Uhlenbeck processes. These results heavily rely on the specific structure of the drift, making it somewhat surprising that we succeed in developing a central limit theorem for our estimator in a more complex setting. By employing empirical projections, one can demonstrate that our system of \(N\) equations in \(\mathbb{R}\) can be reformulated as a single equation in \(\mathbb{R}^N\). This transformation reveals that our fake-stimator has the same form as the least squares estimator found in the existing literature (see Remark \ref{rk: projection} for details). However, there are notable differences in the asymptotic regimes. On the one hand, our fixed time horizon \(T\) simplifies certain computations; on the other hand, the increasing dimensionality of the fBm, as \(N \to \infty\), introduces substantial additional challenges. This makes the parallelism between \(T\) and \(N\), discussed earlier, less apparent in our setting, where the absence of Markovianity adds further complexity.  

In a framework closely related to ours, Comte and Marie \cite{ComMar19} analyze \(N\) i.i.d. copies of fractional SDEs over a fixed time horizon. They propose a nonparametric estimator for the drift based on Skorohod integrals and establish its consistency, along with rates of convergence for both the estimator and its computable approximation. Separately, Marie \cite{Mar23} examines \(N\) i.i.d. copies of our model with a one-dimensional parameter \(\theta\). He establishes convergence results for a computable approximation of the LSE, based on a fixed point argument, valid for \(H > 1/3\).

Comparing our results to those mentioned above, it is important to note that the interactions among the particles introduce several additional challenges. In particular, due to the presence of the empirical measure in \eqref{eq: model}, the Malliavin derivative of particle \(i\) with respect to the fractional Brownian motion of particle \(j\) is non-zero, which contrasts with the case of independent particles where this derivative is zero. This factor significantly complicates our analysis. For example, in the absence of interactions, one can derive explicit expressions for the Malliavin derivative (see \eqref{eq: expression DX}). However, when interactions are present, such equalities no longer hold, and errors arise when transitioning from the Malliavin derivatives to other quantities. We must carefully bound these errors in order to successfully transfer the asymptotic properties of the estim-actor to the computable estimator.

The challenges introduced by the interactions are even more pronounced when proving the asymptotic properties of the estim-actor. To establish its consistency, the main tool we rely on is the propagation of chaos, which enables us to approximate interacting particles by independent ones. However, since our analysis heavily depends on stochastic calculus, the mere propagation of chaos for the particles is insufficient to derive our results. It is also crucial to prove that the Malliavin derivatives of the interacting particles converge, as \(N \to \infty\), to the Malliavin derivatives of the independent particles.

More precisely, the core tool that allows us to derive our main results is the propagation of chaos for the Malliavin derivatives of the process (see Theorem \ref{cor: PoC-deriv}), which holds for any \(H \in (0,1)\). Furthermore, the convergence rate for this propagation of chaos, off the diagonal, is faster than the rate for the particles themselves. This difference is the key reason we are able (quite surprisingly) to recover the asymptotic normality of the fake-stimator (see Section \ref{s: hunt} for details).


Passing from the fake-stimator to computable estimators leads to the appearance of the Malliavin derivative of the drift, which is also unobservable and requires approximation. We propose two different versions of this approximation. The first, denoted \(\hat{\theta}_{N, \epsilon}\), is based on the increments of the process with respect to the initial condition. The second, \(\hat{\theta}_N^{(fp)}\), is based on a fixed-point argument. On one hand, we prove that, by choosing \(\epsilon\) small enough, \(\hat{\theta}_{N, \epsilon}\) retains the same properties as the fake-stimator (see Theorem \ref{th: estim increments}). On the other hand, the approximation provided by \(\hat{\theta}_N^{(fp)}\) incurs some loss in precision. Specifically, we show that this computable estimator is both consistent and asymptotically Gaussian, though with a larger variance compared to that of the estim-actor (see Theorem \ref{th: fp}). Additionally, we introduce an iterative estimator that converges to the fixed-point estimator and demonstrate that it behaves similarly to its limit. Finally, we present numerical results that highlight the strong performance of the computable estimators, with practical results that align well with our theoretical findings. {\rev In addition, in our numerical study we also consider an estimator obtained via a contrast function, which provides a natural framework when only discrete observations of the system are available. This approach, inspired by the classical contrast-based methods for drift estimation in diffusion models, performs well in practice and can be seen as a promising direction toward the extension of our results to the discrete-observation setting. A rigorous theoretical analysis of this estimator, as well as a comprehensive study of the impact of discretization errors, is left for future research.}\\
\\
The structure of the paper is as follows. In Section \ref{s: ass}, we introduce the estimators to be analyzed throughout the paper and outline the assumptions on the model. Under these assumptions, we derive our results on the propagation of chaos, which are presented in Section \ref{s: poc}. Section \ref{s: main} contains the statements of our main results, divided into those concerning the fake-stimator (in Section \ref{s: fake}) and those pertaining to the computable estimators (in Section \ref{s: computable}). Section \ref{s: numerical} presents the numerical results, while Section \ref{s: preliminaries} provides the preliminary concepts and tools necessary for the proofs of our main results. The proof of all key results is given in Section \ref{s: proof main}, and the details required for the preliminary results are provided in Section \ref{s: proof technical}.

\subsection*{Notation}
Throughout the paper all positive constants are denoted by $c$. All vectors are row vectors, $\| \cdot \|$ denotes the Euclidean norm for vectors and $\langle \cdot, \cdot \rangle$ the associated scalar product. For any $f: \R \times \mathcal{P}_l \rightarrow \R$, we denote by $\partial_x f$ the partial derivative of a function $f(x, \mu )$ with respect to $x$ and by $\partial_\mu f$ the partial derivative of a function $f(x, \mu )$ with respect to the measure $\mu$ in the sense of Lions (see Section \ref{s: lions}). We say that a function $f:\R \times \mathcal{P}_l \to \R$ has polynomial growth if 
\begin{align} \label{eq: pol growth}
|f(x,\mu)| \le C (1 +|x|^k +W_2^l(\mu,\delta_0) )
\end{align}
for some {$k,l =0,1,\dots$ and all $(x,\mu)\in \R\times \mathcal{P}_l$. Moreover, we say that $f:\R \times \mathcal{P}_l(\R) \to \R$ is locally Lipschitz if for all $(x_1,\mu_1), (x_2, \mu_2) \in \R\times \mathcal{P}_l(\R)$ and 
for some $k,l =0,1,\dots$
\begin{align}{\label{eq: def loc lip}}
|f(x_1,\mu_1) - f(x_2,\mu_2)| \le C (|x_1 - x_2| + W_2(\mu_1, \mu_2)) (1 +|x_1|^k + |x_2|^k +W_2^l(\mu_1,\delta_0) +W_2^l(\mu_2,\delta_0) ).
\end{align} We denote by $\theta^0$ the true value of the parameter vector and we suppress the dependence of several objects on the true parameter $\theta^0$. In particular, we write $\P := \P^{\theta^0}$, $\E := \E^{\theta^0}$  
$X_t^{i,N}:=X_t^{\theta^0, i, N}$, $\bar X_t^{i} := \bar X^{\theta^0, i}_t$, $\mu_t^{N}:=\mu_t^{\theta^0, N}$ and $\bar \mu_t := \bar \mu^{\theta^0}_t$. Furthermore, we denote by $\xrightarrow{\mathbb{P}}$, $\xrightarrow{\mathcal{L}}$, 
$\xrightarrow{L^p}$ the convergence in probability, in law and in $L^p$, respectively.

\section{Estimators and assumptions}{\label{s: ass}}
This section introduces the concept of what we have referred to in the introduction as the 'fake-stimator' or 'estim-actor', which is based on the least-squares-type estimator, along with the computable estimators that we use to approximate it. \\
\\
Consider a classical SDE with \(N=1\) and \(T \to \infty\):
\[
dX_t^\theta = \langle \theta, b(X_t^\theta) \rangle dt + \sigma dB_t^H.
\]
The least-squares-type estimator is derived by minimizing the error term \(\int_0^T |\dot{X}_t - \langle \theta, b(X_t) \rangle|^2 dt\) with respect to \(\theta\), where \(X_t = X_t^{\theta^0}\).

In the literature on parameter estimation of interacting particle system, for \(H = \frac{1}{2}\), the maximum likelihood estimator for the classical SDE has been extensively replaced by an estimator that maximizes the sum of the likelihood functions computed for each particle. Therefore, it seems natural to replace the error expression above with the sum of the errors across all particles, aiming to minimize the quantity:
$$\sum_{i=1}^N \int_0^T |\dot{X}_t^{i,N} - \langle \theta ,\,b(X_t^{i,N}, \mu_t^N) \rangle |^2 dt.$$
This is equivalent to minimizing the expression:
$$\sum_{i=1}^N \int_0^T |\dot{X}_t^{i,N}|^2 dt - 2\sum_{i=1}^N \int_0^T \langle \theta ,\, b(X_t^{i,N}, \mu_t^N)\rangle dX_t^{i,N} +  \sum_{i=1}^N \int_0^T \langle \theta ,\, b(X_t^{i,N}, \mu_t^N)\rangle^2 dt,$$
which forms a quadratic function of \(\theta\), although the term \(\int_0^T |\dot{X}_t^{i,N}|^2 dt\) does not exist. The formal minimizer of this quadratic form is given by the $\R^p$ vector
\begin{equation}\label{eq: def estimator}
\tilde{\theta}_N := \Psi_N^{-1}\cdot \sum_{i=1}^N \int_0^T b(X_t^{i,N}, \mu_t^N) dX_t^{i,N} = \theta^0 +  \Psi_N^{-1}\cdot \sum_{i=1}^N \int_0^T b(X_t^{i,N}, \mu_t^N) \sigma dB_t^{i,H},
\end{equation}
where the integrals are taken componentwise, and $\Psi_N$ is the $p\times p$ matrix given by
$$\left(\Psi_N\right)_{lj}:=\left(\sum_{i=1}^N \int_0^T b_l(X_t^{i,N}, \mu_t^N)b_j(X_t^{i,N}, \mu_t^N) dt\right)_{lj}.$$
To obtain this, it is enough to observe that
\begin{align*}
    &\int_0^T b(X_t^{i,N}, \mu_t^N) dX_t^{i,N} = \int_0^T b(X_t^{i,N}, \mu_t^N) ((\theta^0 )^T b (X_t^{i,N}, \mu_t^N) )^Tdt + \int_0^T b(X_t^{i,N}, \mu_t^N) \sigma dB_t^{i,H}\\
    &= \int_0^T b(X_t^{i,N}, \mu_t^N) b (X_t^{i,N}, \mu_t^N)^T \theta^0 dt + \int_0^T b(X_t^{i,N}, \mu_t^N) \sigma dB_t^{i,H}= \Psi_N \theta^0 + \int_0^T b(X_t^{i,N}, \mu_t^N) \sigma dB_t^{i,H}.\\
\end{align*}
In this context, we interpret the stochastic integral \(\int_0^T b(X_t^{i,N}, \mu_t^N) \sigma dB_t^{i,H}\) as a divergence-type (or Skorohod) integral.
Recall that when \(H = \frac{1}{2}\), the divergence-type integral simplifies to the standard Itô stochastic integral, which can be approximated using forward Riemann sums. However, for \(H > \frac{1}{2}\), the Skorohod integral is defined as the limit of Riemann sums involving the Wick product (see \cite{Duncan2000}). 
This Wick product-based approximation, however, is impractical for numerical simulations since the Wick product cannot be directly computed from the values of the factors involved. Fortunately, a well-known relationship between the Skorohod integral and the Stratonovich integral, involving the Malliavin derivatives, offers a pathway to develop a computationally feasible estimator (see Proposition 5.2.3 in \cite{NuaBook} for details):  

\begin{equation}{\label{eq: link divergence Young}}
\int_0^T b(X_t^{i,N}, \mu_t^N) \circ dB_t^{i,H} = \int_0^T b(X_t^{i,N}, \mu_t^N) dB_t^{i,H} + \int_0^T \int_0^T D^i_s b(X_t^{i,N}, \mu_t^N) \phi(t,s) \, ds \, dt,
\end{equation} 
where we have denoted as $\int_0^T b(X_t^{i,N}, \mu_t^N) \circ  dB_t^{i,H}$ the Stratonovich integral and we have introduced the notation
 \begin{equation}{\label{eq: def phi}}
\phi(t,s) := H(2H -1) |t-s|^{2H-2}.
 \end{equation}
Moreover, \( D^i_s b(X_t^{i,N}, \mu_t^N) \) represents the Malliavin derivative, rigorously computed below, in \eqref{eq: DFx}. However, the Malliavin derivatives of the particles are not directly observable, so they must be approximated.  

It is worth noting that, in the absence of interactions, one can easily obtain the following result {\rev (see Proposition~2.7 in \cite{ComMar19} for a proof of the equality below, based on the dynamics of the processes under consideration)}: 
\begin{equation}{\label{eq: expression DX}}
D_s X_t = \sigma \frac{\partial_{x_0} X_t}{\partial_{x_0} X_s} 1_{s \leq t}  
= \sigma \exp\left(\int_s^t \langle \theta^0, \partial_x b(X_r) \rangle \, dr\right) 1_{s \leq t},
\end{equation}
where \( x_0 \) is the initial condition. 

However, when interactions are introduced, the situation becomes significantly more complex. Controlling the error produced in the approximation poses a considerable challenge. Nevertheless, this argument ultimately justifies the introduction of the following (computable) estimators. The first equality in \eqref{eq: expression DX} leads us to
\begin{align}{\label{eq: def estimator increments}}
&\hat{\theta}_{N, {\epsilon}}^{(2)} := \Psi_N^{-1}{\sum_{i=1}^N \int_0^T b(X_t^{i,N}, \mu_t^N) \circ dX_t^{i,N}}\nonumber \\
&\quad- \Psi_N^{-1}{\sum_{i=1}^N \int_0^T \int_0^t \partial_x b(X_t^{i,N}, \mu_t^N) \frac{\frac{1}{\epsilon}(X_t^{i, x_0^i + \epsilon}- X_t^{i, x_0^i} )}{\frac{1}{\epsilon}(X_s^{i, x_0^i + \epsilon}- X_s^{i, x_0^i} ) \lor 1} \sigma \phi(t,s) ds \, dt}.
\end{align}
This estimator is practical in cases where one can observe two different paths of \( X^i \) for each individual \( i \), with different but close initial conditions. The proximity of the initial conditions is determined by \( \epsilon \). 

The second approximation of the Malliavin derivative as in \eqref{eq: expression DX}, using the exponential representation outlined above, motivates the introduction of the following fixed-point estimator, provided that such an object exists:  
\begin{align}\label{eq: def fp estimator}
    \hat{\theta}_{N}^{(fp)} & :=  \Psi_N^{-1} \sum_{i=1}^N \int_0^T b(X_t^{i,N}, \mu_t^N) \circ dX_t^{i,N} \\
    & \quad - \Psi_N^{-1} \sum_{i=1}^N \int_0^T \int_0^t \partial_x b(X_t^{i,N}, \mu_t^N)
    \exp\!\left(\int_s^t \langle \hat{\theta}_{N}^{(fp)}, \partial_x b(X_r^{i,N}, \mu_r^N) \rangle \, dr\right)
    \sigma \phi(t, s) \, ds \, dt. \nonumber
\end{align}
We denote this expression compactly as  
\[
\hat{\theta}_{N}^{(fp)} =: F_N(\hat{\theta}_{N}^{(fp)}),
\]
{\rev where $F_N : \R^p \to \R^p$ is the mapping defined by
\begin{align*}
F_N(\theta)
&= \Psi_N^{-1} \sum_{i=1}^N \int_0^T b(X_t^{i,N}, \mu_t^N) \circ dX_t^{i,N} \\
&\quad - \Psi_N^{-1} \sum_{i=1}^N \int_0^T \int_0^t \partial_x b(X_t^{i,N}, \mu_t^N)
\exp\!\left(\int_s^t \langle \theta, \partial_x b(X_r^{i,N}, \mu_r^N) \rangle \, dr\right)
\sigma \phi(t, s) \, ds \, dt.
\end{align*}
}

Furthermore, we introduce a sequence of iterative estimators designed to converge to the fixed-point estimator. 

Our primary goal is to analyze these estimators and establish their desirable asymptotic properties, including consistency and asymptotic Gaussianity. These properties are derived from the corresponding results obtained for the fake-stimator defined in \eqref{eq: def estimator}, combined with the observation that the errors introduced in transitioning between the two are negligible. {\rev Note that, in order to estimate the parameter in the drift, the proposed estimator is based on observations of the interacting particle system. However, the analysis of its asymptotic properties requires moving from the interacting particles to an independent particle system, so that classical limit results such as the law of large numbers and the central limit theorem for averages of i.i.d. random variables can be applied. The key tool that allows this transition is the propagation of chaos. In particular, we need to establish this property not only for the particles themselves but also for their Malliavin derivatives, as both play a fundamental role in the derivation of our theoretical results.}}


Notice that our system can be interpreted as a single equation in \(\mathbb{R}^N\), rather than as \(N\) separate equations in \(\mathbb{R}\). This is formalized through the concept of \textit{empirical projection}, as defined in Definition \ref{def: empirical projection} below. To this end, we introduce the function \(B^N : \mathbb{R}^N \to \mathbb{R}^{N\times p}\), defined as:
$$B^N({x}) := (b^{1,N}({x}), \dots, b^{N,N}({x}))^T := \left(b(x^1, \mu^N), \dots, b(x^N, \mu^N)\right)^T,$$
where \(\mu^N\) represents the empirical distribution and $M^T$ denotes the transpose of the matrix $M$.

Using the empirical projection, we can reformulate the interacting particle system in \eqref{eq: model} as the following high-dimensional SDE in \(\mathbb{R}^N\):
$$d{X}^\theta_t = B^N({X}^\theta_t)  \theta dt + \sigma  d {B}_t^H,$$
where \({B}^H = (B^{1,H}, \dots, B^{N,H})^T\) is an \(N\)-dimensional fractional Brownian motion.

\begin{remark}{\label{rk: projection}}
It is worth noting that in this context, the least squares fake-stimator takes the same form as in \cite{HuNuaZho19}:
$$\tilde{\theta}_N := \theta^0 + \left( \int_0^T (B^N)^T B^N({X}_s) ds \right)^{-1} \int_0^T B^N({X}_s) \sigma  d{B}_s^H.$$
At first glance, one might (mistakenly) assume that our result is similar or comparable to the one in \cite{HuNuaZho19}. However, this is not the case due to the significant differences in the asymptotic regimes. 

In our framework, the time horizon \(T\) is fixed, which simplifies some computations. On the other hand, the dimension of the fractional Brownian motion increases to infinity under our assumptions, introducing many new challenges.
\end{remark}

In the sequel, we will always assume that the following assumptions hold true. 
\begin{assumption} \label{as: lip}
(Lipschitz condition) The drift coefficient is Lipschitz continuous in $(x,\mu)$, i.e.\ there exists a constant $c > 0$ such that for all $(x,\mu), (y,\nu) \in \R \times {\cal P}_2(\R)$, and for each $m=1,\dots , p$
$$|b_m(x, \mu) - b_m (y, \nu)| \le c ( |x - y| + W_2(\mu, \nu) ).$$
\end{assumption}

\begin{assumption} \label{as: bound}
There exists a constant $c > 0$ such that for each $m=1,\dots , p$, we have $|b_m(0, \delta_0)| \le c$, where $\delta_0$ denotes the Kronecker delta.
\end{assumption}

Observe that Assumptions \ref{as: lip} and \ref{as: bound} together imply the linear growth of $b_m$, $m=1,\dots , p$, in the following sense: $\forall x \in \R$, $\forall \mu \in \mathcal{P}_2(\R)$; there exists a constant $c > 0$ such that
\begin{equation}{\label{eq: linear b}}
    |b_m(x, u)| \le c(1 + |x| + W_2(\mu, \delta_0)). 
\end{equation}

\begin{assumption} \label{as: moments}
(Boundedness moments) For all $k \ge 1$
$$C_k:=\int_{\R} |x|^k \mu_0(dx)<\infty.$$
\end{assumption}

Under the Assumptions \ref{as: lip}, \ref{as: bound} and \ref{as: moments}, both Equations \eqref{eq: model} and \eqref{eq: McK} are well-posed, meaning that their solutions exist and are unique. The well-posedness of \eqref{eq: model} is relatively standard and becomes intuitively clear when considering its reformulation through empirical projection. However, the well-posedness of \eqref{eq: McK} is more intricate and can be found in Theorem 3.1 of \cite{FanHuang}.

{\rev Observe that the assumptions stated above are quite standard. In \cite[Section 2.2]{Amo23}, several examples of drift functions satisfying the same assumptions are discussed in the context of drift and diffusion coefficient estimation (with a more general diffusion coefficient) for particles driven by standard Brownian motion. Specifically, the drift functions in their Examples (i), (ii), and (iii) satisfy our assumptions. While this is straightforward for Examples (i) and (iii), Example (ii) also satisfies our assumptions provided that the parameter \(\theta_{0,1,2}\) is assumed known and only \(\theta_{0,1,1}\) is estimated. The general case
$$b_m(x,\mu)=\int_{\R}\tilde{b}_m(x,y)d\mu(y),$$
where $\tilde{b}_m$ is Lipschitz continuous in both variables, is also covered by Assumption \ref{as: lip}, as explained in \cite{DelHof}.
}

\begin{assumption} \label{as: identif}
(Identifiability) For almost every $(x, \mu) \in\R \times \mathcal{P}_2(\R)$, for $\theta^1$, $\theta^2\in\Theta$
$$\langle \theta^1,\,b(x,\mu)\rangle = \langle \theta^2,\,b(x,\mu)\rangle \text{ implies }\theta^1=\theta^2.$$

\end{assumption}

While the identifiability condition assumed above is quite standard in our context (see, for example, A5 in \cite{Amo23} for the case \( H = \frac{1}{2} \)), verifying this condition in practice can be challenging. We refer to Section 2.4 of \cite{Hof2}, where the authors provide a detailed analysis of the identifiability condition. Specifically, for estimating the drift from continuous observations in the case of interacting particle systems driven by standard Brownian motion, they identify explicit criteria that ensure both identifiability and the non-degeneracy of the Fisher information matrix.

Furthermore, to establish our main results, we must assume that the denominator in the estim-actor does not diverge in probability, as formalized in the following assumption.

\begin{assumption} \label{as: denominator}
Assume that $\mathbb{P} \left( \int_0^T b_m^2(\bar{X}_s, \bar{\mu}_s) ds > 0 \right) > 0$ for all $m=1,\dots , p$, where $\bar{X}_s$ is the solution of the McKean-Vlasov equation \eqref{eq: McK}, and $\bar{\mu}_s$ is its law. 
\end{assumption}

{\change Note that this assumption is very mild. For instance, under the continuity conditions on \( b \), it is satisfied as long as the functions \( b_m \) are not identically zero in a neighborhood of the initial conditions.}

\begin{remark}
    Let us briefly discuss the  estimation question of the parameters in the diffusion term, $\sigma$ and $H$. In order to obtain these parameters, we do not need to use the techniques related to the propagation of chaos, since one can construct consistent and asymptotically normal estimators of $\sigma$ and $H$ from high-frequency observations of a single trajectory for a fixed $N$ following the quadratic variations ansatz. This is in analogy with the estimation of $\sigma$ and $H$ from observations over a bounded interval in the classical SDE case (see, for example, \cite{HuNuaZho SISP} and \cite{GaiImkSheTud}), and, indeed, the same method can be used for interacting particle. The crucial idea is that the drift term
    $$Y^{i,N}:=\left(\int_0^t \langle \theta^0,\, b(X^{i,N}_s, \mu^N_s)\rangle ds\right)_{t\in [0,\,T] }$$
    is sufficiently smooth and does not impact the limit of the empirical quadratic variations of the trajectories. More precisely, let us define empirical quadratic variations of order $2$ of a process $(R_t)_{t\in [0,\,T]} $ as
    $$V^M(R):= \sum_{i=2}^{M} \left(R_{\frac{iT}{M}}-2R_{\frac{(i-1)T}{M}}+R_{\frac{(i-2)T}{M}} \right)^2.$$
    Then under the Assumptions \ref{as: lip}, \ref{as: bound} and \ref{as: moments}, it is straightforward to show that
    $$M^{-\frac{1}{2}+2H}V^M(Y^{i,N})\to 0 $$
    in probability, and it follows by Minkowski inequality that $V^M(X^{i,N})$ behaves asymptotically like $V^M(\sigma B^{i,H})$, namely, one can obtain
    $$\sqrt{M}( M^{-1+2H} V^M(X^{i,N}) - c\sigma^2 T)\stackrel{d}{\to} \mathcal N(0, T\sigma^4)$$
    with a known, explicit constant $c$, see Corollary 3.4 in \cite{HuNuaZho SISP}. From this convergence result one can construct consistent and asymptotically normal estimators for $\sigma$ and $H$, and from a multivariate version for $(V^{M_k}(X^{i,N}))$, ${k=1,\dots , p}$, joint estimation of $(\sigma, H)$ is possible following the approach in \cite{IstLan} and \cite{Coeur}. Therefore, as we are considering continuous observations, $\sigma$ and $H$ can be assumed known in our manuscript.
\end{remark}

To establish our main results, we will primarily rely on Malliavin calculus and the propagation of chaos. A brief introduction to Malliavin calculus is provided in Section \ref{s: preliminaries}, while the specifics of the propagation of chaos are discussed in the subsequent subsection.

\subsection{Propagation of chaos}{\label{s: poc}}
The interacting particle system is naturally associated to its
mean field equation as $N \rightarrow \infty$. The latter is described by the 1-dimensional McKean-Vlasov SDE
\begin{equation}
d \bar{X}_t^{\theta} =  \sum_{m=1}^p \theta_m b_m \big(\bar{X}_t^{\theta}, \bar{\mu}_t^{\theta} \big) dt + \sigma d B_t^H,  \quad t\in [0, T],
\end{equation}
where $\bar{\mu}_t^{\theta}$ is the law of $\bar{X}_t^{\theta}$ and $(B_t^H)_{t \in [0, T]}$ is a fractional Brownian motion with Hurst index $H$, independent of the initial value having the law $\bar \mu_0^\theta := \mu_0$.
This equation is non-linear in the sense of McKean, see e.g.\ \cite{60imp,61imp,79imp}. It means, in particular, that the drift coefficient depends not only on the current state but also on the current distribution of the solution. {\rev If we now consider $N$ independent fractional Brownian motions $(B_t^{1,H})_{t \in [0,T]}, \dots, (B_t^{N,H})_{t \in [0,T]}$, each with the same Hurst index $H$, we can define a corresponding system of independent particles $(\bar{X}_t^{1,\theta}, \dots, \bar{X}_t^{N,\theta})$ satisfying
\[
d \bar{X}_t^{i,\theta} =  \sum_{m=1}^p \theta_m b_m \big(\bar{X}_t^{i,\theta}, \bar{\mu}_t^{\theta} \big) dt + \sigma d B_t^{i,H}, \quad i = 1, \dots, N.
\]
In this formulation, each particle $\bar{X}_t^{i,\theta}$ is driven by its own fractional Brownian motion $B_t^{i,H}$, and the collection $(\bar{X}_t^{i,\theta})_{i=1}^N$ consists of i.i.d.\ copies of the solution to the McKean–Vlasov equation. We denote by $\bar{X}_t^{i}$ the process corresponding to the true parameter value, that is, $\bar{X}_t^{i} := \bar{X}_t^{i,\theta_0}$.}

When \( H = \frac{1}{2} \), it is well known that, even with more general models and under appropriate assumptions on the coefficients, one can observe a phenomenon commonly referred to as \textit{propagation of chaos} (see, for example, \cite{79imp}). This means that the empirical measure \( \mu_t^{\theta, N} \) converges weakly to \( \bar{\mu}_t^{\theta} \) as \( N \rightarrow \infty \). Among the key works, we highlight \cite{poc, poc2}, where the propagation of chaos is proven in very general settings, including those with singular coefficients.

For \( H \neq \frac{1}{2} \), however, the literature is significantly more sparse, and additional assumptions on the coefficients are required, even to guarantee the well-posedness of the limiting McKean-Vlasov equation.
It is worth noting that \cite{CDFM20} demonstrates, by revisiting an old proof by Tanaka \cite{Tan84}, that the connection between interacting and independent particle systems holds for a wide variety of driving noises. Some of the ideas in \cite{CDFM20} build upon the work in \cite{BCD20}, which provides a robust solution theory for random rough SDEs of mean field type. The paper \cite{BCD21} is devoted to the propagation of chaos in the same setting.

In a context closer to ours, \cite{Galeati} addresses the case where the interacting particle system is driven by an additive fBm for $H \in (0,1)$, $H \neq \frac{1}{2}$. Although the main contribution of that paper is the proof of propagation of chaos in the presence of a singular drift, their result also applies to the case of Lipschitz drift (with the regular drift case detailed in Appendix A). Due to the complexity of their problem, the drift in their framework is assumed to be linear in the measure component, which contrasts with our setting. {\change A similar linear structure is also considered in \cite{HuRam}, where the authors apply a mimicking theorem approach to establish sharp convergence rates. In light of these differences, we provide a dedicated proof of propagation of chaos tailored to our specific model. }

\begin{theorem}{\label{th: poc}}
{\rev Let $H \in (0,1)$.} Assume Assumptions \ref{as: lip}, \ref{as: bound}, and \ref{as: moments} hold. Then, for each $q \ge 2$, there exists a constant $c > 0$ such that, for any $i \in \{1, ... , N \}$:
$$ \E \Big[ \sup_{t \in [0, T]}  |X_t^{i,N} - \bar{X}_t^{i}|^q \Big] \le c \Big( \frac{1}{\sqrt{N}}\Big), \qquad \E \Big[ \sup_{t \in [0, T]}  W_2^q(\mu_t^N, \bar{\mu}_t) \Big] \le c \Big( \frac{1}{\sqrt{N}}\Big).$$
\end{theorem}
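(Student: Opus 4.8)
The plan is to run a synchronous coupling: I compare $X^{i,N}$ with the independent copy $\bar X^{i}$ from \eqref{eq: McK} driven by the \emph{same} fractional Brownian motion $B^{H,i}$. Since the noise enters additively, it cancels in the difference, leaving the pathwise identity
\begin{equation*}
X_t^{i,N}-\bar X_t^{i}=\sum_{m=1}^p\theta^0_m\int_0^t\big(b_m(X_s^{i,N},\mu_s^N)-b_m(\bar X_s^{i},\bar\mu_s)\big)\,ds .
\end{equation*}
By Assumption~\ref{as: lip} the integrand is bounded by $c\big(|X_s^{i,N}-\bar X_s^{i}|+W_2(\mu_s^N,\bar\mu_s)\big)$, so the whole analysis reduces to controlling the Wasserstein term.

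\textbf{Reduction to a Grönwall inequality.} I introduce the empirical measure of the independent system, $\bar\mu_t^N:=\frac1N\sum_{j=1}^N\delta_{\bar X_t^{j}}$. The triangle inequality gives $W_2(\mu_s^N,\bar\mu_s)\le W_2(\mu_s^N,\bar\mu_s^N)+W_2(\bar\mu_s^N,\bar\mu_s)$, while matching the atoms $X_s^{j,N}\leftrightarrow\bar X_s^{j}$ yields $W_2^2(\mu_s^N,\bar\mu_s^N)\le\frac1N\sum_{j=1}^N|X_s^{j,N}-\bar X_s^{j}|^2$. Taking the supremum over $s\le t$, raising to the power $q$ (Jensen in the time integral, power-mean in $j$ using $q\ge2$), taking expectations and invoking exchangeability of the particles, I obtain, for $g_t:=\E\big[\sup_{s\le t}|X_s^{i,N}-\bar X_s^{i}|^q\big]$, an inequality of the form
\begin{equation*}
g_t\le c\int_0^t\Big(g_r+\E\big[W_2^q(\bar\mu_r^N,\bar\mu_r)\big]\Big)\,dr .
\end{equation*}
The decisive point is that the inhomogeneous term is evaluated at a \emph{fixed} time $r$ and only afterwards integrated, so no uniformity in time is needed to close it.

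\textbf{Closing the first estimate.} For each fixed $r$ the samples $\bar X_r^{1},\dots,\bar X_r^{N}$ are i.i.d.\ with law $\bar\mu_r$ having finite moments of every order, uniformly in $r\in[0,T]$; these a priori moment bounds follow from the linear growth \eqref{eq: linear b}, Assumption~\ref{as: moments} and the finiteness of the moments of fBm. The quantitative rate for the convergence of empirical measures of i.i.d.\ real samples (of Fournier--Guillin type) then gives $\E[W_2^q(\bar\mu_r^N,\bar\mu_r)]\le cN^{-1/2}$ uniformly in $r$, since in dimension one, for any exponent $\ge2$ and under finiteness of sufficiently high moments, this rate is $N^{-1/2}$. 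Grönwall's lemma then yields $g_T\le cN^{-1/2}$, which is the first assertion.

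\textbf{The empirical--measure estimate and the main obstacle.} For the second bound I use once more $\sup_{t}W_2(\mu_t^N,\bar\mu_t)\le\sup_{t}W_2(\mu_t^N,\bar\mu_t^N)+\sup_{t}W_2(\bar\mu_t^N,\bar\mu_t)$. The first term is dominated by $\big(\frac1N\sum_j\sup_t|X_t^{j,N}-\bar X_t^{j}|^2\big)^{1/2}$, whose $q$-th moment is controlled by the bound $g_T\le cN^{-1/2}$ just established. The genuine difficulty is the second term, the \emph{uniform-in-time} convergence of the i.i.d.\ empirical measure, i.e.\ $\E\big[\sup_{t\le T}W_2^q(\bar\mu_t^N,\bar\mu_t)\big]\le cN^{-1/2}$. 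This is a statement about independent copies that is decoupled from the interaction, so it cannot be reduced to the particle differences, and a naive chaining over a time grid combined with the Hölder regularity of $t\mapsto\bar X_t$ only produces a rate worse than $N^{-1/2}$ that moreover deteriorates as $H\to0$. To obtain the stated $H$-independent rate I would instead exploit the one-dimensional structure: write $W_2^2(\bar\mu_t^N,\bar\mu_t)$ through the quantile/CDF representation and control the two-parameter empirical process $(t,x)\mapsto F_t^N(x)-F_t(x)$ by a uniform (in both $t$ and $x$) Glivenko--Cantelli estimate with rate $N^{-1/2}$, valid because the underlying class of events $\{\mathbf{1}_{\{\bar X_t\le x\}}:t\in[0,T],\,x\in\R\}$ is suitably small (e.g.\ of Vapnik--Chervonenkis type), and then pass from the sup-norm of the distribution functions to $W_2$ by truncation using the uniform moment bounds. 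This step, rather than the coupling itself, is where the main work lies, and it is also what makes the conclusion valid for every $H\in(0,1)$.
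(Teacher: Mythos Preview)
Your reduction to the Gr\"onwall inequality and the closing of the first estimate is exactly the paper's argument: synchronous coupling, Lipschitz bound on the drift, insertion of the intermediate empirical measure $\bar\mu_r^N$, the pointwise Fournier--Guillin rate $\E[W_2^q(\bar\mu_r^N,\bar\mu_r)]\le cN^{-1/2}$, and Gr\"onwall. This part is correct and matches the paper line by line.

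Where you go beyond the paper is in isolating the real obstacle in the second bound. The paper itself dismisses $\E[\sup_t W_2^q(\mu_t^N,\bar\mu_t)]\le cN^{-1/2}$ as ``naturally following'' from what precedes, whereas you correctly observe that after splitting via $\bar\mu_t^N$ one is left with $\E[\sup_t W_2^q(\bar\mu_t^N,\bar\mu_t)]$, for which a fixed-time Fournier--Guillin bound gives nothing. So on this point you are being more careful than the paper.

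That said, your proposed fix does not close. The class $\{\omega\mapsto 1_{\bar X_t(\omega)\le x}:t\in[0,T],\,x\in\R\}$, viewed as indicators on path space, is \emph{not} a VC class: for any $n$ one can build $n$ continuous paths such that every subset $S\subseteq\{1,\dots,n\}$ is realised as $\{i:\phi_i(t)\le x\}$ for some $(t,x)$ (prescribe the values at $2^n$ times and interpolate continuously). Hence no finite VC dimension is available, and the uniform Glivenko--Cantelli bound over both $t$ and $x$ at rate $N^{-1/2}$ does not follow from the VC machinery you invoke. Even granting such a bound, the conversion to $W_2$ is not a truncation step: in one dimension $W_2^2$ equals the $L^2$ distance of the \emph{quantile} functions, and control of $\sup_{t,x}|F_t^N(x)-F_t(x)|$ translates naturally to $W_1$, not $W_2$. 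In short, the uniform-in-time version of the second bound at rate $N^{-1/2}$ is not established by your sketch; note, however, that every subsequent use of Theorem~\ref{th: poc} in the paper only needs the pointwise estimate $\sup_t\E[W_2^q(\mu_t^N,\bar\mu_t)]\le cN^{-1/2}$, which \emph{does} follow directly from the arguments for the first bound.
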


It is important to highlight that Malliavin calculus for fractional Brownian motion will play a central role in establishing our main results. Consequently, we will frequently rely on the fact that the Malliavin derivative of an interacting particle is well approximated by that of the independent particle, similar to the propagation of chaos discussed earlier. This finding, which extends the propagation of chaos to Malliavin derivatives, is summarized in Proposition \ref{cor: PoC-deriv} below. To establish this result, it is essential to introduce an assumption concerning the derivatives of $b$. Observe that it involves the derivative with respect to the measure, known as the $L$-derivative, where the $L$ stands for "Lions". The role and properties of this derivative are explained in Section \ref{s: lions}.

\begin{assumption}[$f$] \label{as: deriv}
The function \( f(x, \mu) \) is continuously differentiable in \( x \), with \( \partial_x f(x, \mu) \) uniformly bounded in both \( x \) and \( \mu \). Furthermore, the map \( \mu \mapsto f(x, \mu) \) is \( \mathbb{P} \)-a.s. continuous in the topology induced by \( W_2 \), and \( \mathbb{P} \)-a.s. \( L \)-differentiable at every \( \mu \in \mathcal{P}_2(\mathbb{R}) \). The derivative \( \partial_\mu f(x, \mu)(v) \) has a \( \mathbb{P} \)-a.s. continuous and uniformly bounded version. Additionally, the functions \( \partial_x f \) and \( \partial_\mu f \) are Lipschitz continuous in both variables.
\end{assumption}

To clearly present the propagation of chaos for Malliavin derivatives, we begin by understanding how the Malliavin derivatives for both the independent and interacting particle systems depend on $\bar{X}^i$ and ${X}^{i,N}$. For any $i, j \in \{1, ... , N \}$ and \( s \leq t \), they satisfy
\begin{equation}{\label{eq: Ds barX}}
D_s^j \bar{X}^i_t = 1_{\{i=j\}}\left(\sigma  + \int_s^t \langle\theta^0,\, \partial_x b(\bar{X}^i_r, \bar{\mu}_r)\rangle D^i_s \bar{X}^i_r \, dr\right),  
\end{equation}
\begin{equation}{\label{eq: DsX}}
D_s^j {X}^{i,N}_t = 1_{\{i=j\}} \sigma  + \int_s^t  \Big( \langle\theta^0,\,\partial_x b({X}^{i,N}_r, {\mu}^N_r)\rangle D_s^j {X}^{i,N}_r \, + \frac{1}{N}\sum_{k=1}^N \langle\theta^0,\,(\partial_{\mu}b)({X}^{i,N}_r, \mu^N_r)(X^{k, N}_r)\rangle D_s^j X^k_r \Big) \, dr,    
\end{equation}
and for \( s > t \), these derivatives are zero.

For more details on Malliavin calculus for fractional Brownian motion, see Section \ref{s: malliavin}, and for explicit computations of the Malliavin derivatives above, refer to Point 1 of Lemma \ref{l: deriv-bds}.

Observe that, when addressing the problem of the convergence of the Malliavin derivatives of interacting particle systems towards those of the independent particles, the only result we are aware of is Proposition 4.8 in \cite{DosWil}. In that work, for particles driven by standard Brownian motions, the authors prove that \( D^j X_t^{i,N} \to D^j \bar{X}_t^i \) in \( L^2 \) as \( N \to \infty \). However, no quantitative estimates are provided.

We are now prepared to extend the propagation of chaos result, presented in Theorem \ref{th: poc}, to include the Malliavin derivatives in a quantitative manner. The detailed proof of this extension is provided in Section \ref{s: proof main}.
Our quantitative result applies also when the particles are driven by a standard Brownian motion.

\begin{theorem}{\label{cor: PoC-deriv}}
Let $H \in (0, 1)$. Under Assumptions \ref{as: lip}, \ref{as: bound}, \ref{as: moments}, and \ref{as: deriv}(b), there exists a constant \( c > 0 \) such that, for any \( i, j \in \{1, \dots, N \} \), the following holds, for any $s \in [0, T]$:
\begin{enumerate}
\item For each $q \ge 1$ $$\sup_{t \in [s, T]} |D_s^j {X}^{i,N}_t - D_s^j \bar{X}^i_t|^q = \sup_{t \in [s, T]} |D_s^j {X}^{i,N}_t |^q  \le c \left( \frac{1}{N} \right)^q \qquad \text{for} \ j \neq i.$$
    \item For each $q \ge 2$, $$ \E \Big[ \sup_{t \in [s, T]} |D_s^i {X}^{i,N}_t - D_s^i \bar{X}^i_t|^q \Big] \le c \left( \frac{1}{\sqrt{N}} \right).$$
\end{enumerate}
\end{theorem}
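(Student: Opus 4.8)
The plan is to treat both \eqref{eq: Ds barX} and \eqref{eq: DsX} as linear (random) integral equations for the derivative processes, whose coefficients $\langle\theta^0,\partial_x b\rangle$ and $\langle\theta^0,\partial_\mu b\rangle$ are uniformly bounded, say by a deterministic constant $C$, thanks to Assumption~\ref{as: deriv}. The whole argument rests on one structural observation: the off-diagonal derivatives $D^j_s X^{i,N}$ with $i\neq j$ are of order $1/N$ \emph{pointwise} (Point 1), which is an order of magnitude smaller than the diagonal propagation-of-chaos error $1/\sqrt N$ of Theorem~\ref{th: poc}. Consequently, in Point 2 the measure-derivative coupling term — precisely the term that is \emph{absent} from the limiting equation \eqref{eq: Ds barX}, since $\bar\mu_r$ is deterministic and hence has vanishing Malliavin derivative — will contribute only at order $1/N$, and the dominant $1/\sqrt N$ rate will come solely from feeding the particle propagation of chaos through the Lipschitz continuity of $\partial_x b$. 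I would prove Point 1 first and then use it inside Point 2.

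For Point 1, fix $s$ and $j$ and write $u^i_t:=D^j_s X^{i,N}_t$, so that \eqref{eq: DsX} reads $u^i_t=\sigma\,\mathbf{1}_{\{i=j\}}+\int_s^t\big(a^i_r u^i_r+\tfrac1N\sum_{k=1}^N c^{ik}_r u^k_r\big)\,dr$ with $|a^i_r|,|c^{ik}_r|\le C$ deterministically; note that only the $j$-th equation carries the source $\sigma$. Summing the moduli over $i$ and using that the double average collapses ($\sum_i\tfrac1N\sum_k=\sum_k$) gives $W_t:=\sum_i|u^i_t|\le\sigma+2C\int_s^t W_r\,dr$, hence $W_t\le\sigma e^{2CT}=:K$ by Grönwall. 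Bootstrapping, for $i\neq j$ the only inhomogeneity is the aggregate term $\tfrac1N W_r\le K/N$, so $|u^i_t|\le \tfrac{CKT}{N}+C\int_s^t|u^i_r|\,dr$ and a second Grönwall yields $\sup_{t\in[s,T]}|u^i_t|\le c/N$. Since $D^j_s\bar X^i_t=0$ for $i\neq j$ by \eqref{eq: Ds barX}, this is exactly the claimed bound after raising to the power $q$.

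For Point 2, set $D_t:=D^i_s X^{i,N}_t$ and $\bar D_t:=D^i_s\bar X^i_t$ and subtract \eqref{eq: Ds barX} from \eqref{eq: DsX}. I would decompose the $\partial_x b$ contribution as $\langle\theta^0,\partial_x b(X^{i,N}_r,\mu^N_r)\rangle(D_r-\bar D_r)+\langle\theta^0,\partial_x b(X^{i,N}_r,\mu^N_r)-\partial_x b(\bar X^i_r,\bar\mu_r)\rangle\bar D_r$: the first summand is bounded by $C|D_r-\bar D_r|$ and feeds Grönwall, while the second is bounded by $C(|X^{i,N}_r-\bar X^i_r|+W_2(\mu^N_r,\bar\mu_r))$ using that $\partial_x b$ is Lipschitz and that $\bar D_r$ is bounded ($|\bar D_r|\le\sigma e^{CT}$ by Grönwall on \eqref{eq: Ds barX}). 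The measure-derivative term $\tfrac1N\sum_k c^{ik}_r D^i_s X^{k,N}_r$ is split into $k=i$, bounded by $CK/N$ since $|D_r|\le K$ by the aggregate bound of Point 1, and $k\neq i$, bounded by $C/N$ by Point 1; altogether it is $O(1/N)$. Writing $E_t:=\sup_{u\in[s,t]}|D_u-\bar D_u|$, Grönwall gives $E_T\le e^{CT}\big(C\int_s^T(|X^{i,N}_r-\bar X^i_r|+W_2(\mu^N_r,\bar\mu_r))\,dr+\tfrac{c}{N}\big)$. Taking $q$-th moments, applying Jensen to the time integral and invoking Theorem~\ref{th: poc} (which controls $\E[(|X^{i,N}_r-\bar X^i_r|+W_2(\mu^N_r,\bar\mu_r))^q]$ at rate $1/\sqrt N$) yields $\E[E_T^q]\le c/\sqrt N$, using $1/N^q\le 1/\sqrt N$ for $q\ge 2$.

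The main obstacle — and the conceptual novelty — is Point 1: one must recognize that the $1/N$ factor in front of the empirical-measure sum, combined with the fact that only a single equation of the coupled linear system carries the $\sigma$ source, forces the off-diagonal derivatives down to order $1/N$. The technical subtlety there is the two-step Grönwall argument (first on the aggregate $W_t$, then on the individual $u^i_t$); obtaining a genuinely \emph{pointwise}, deterministic bound rather than merely an $L^q$ bound is what renders the coupling term in Point 2 negligible and is ultimately responsible for the faster-than-$\sqrt N$ off-diagonal rate emphasized in the introduction. Once Point 1 is in hand, Point 2 is a fairly standard Grönwall-plus-propagation-of-chaos estimate.
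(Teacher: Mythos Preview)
Your proof is correct and follows essentially the same approach as the paper: the two-step Gr\"onwall argument for Point 1 (first on the aggregate, then on the individual off-diagonal entry) is exactly how the paper establishes the bound $|D_s^j X_t^{i,N}|\le c/N$ in Lemma~\ref{l: deriv-bds}, Point 2, and your Point 2 combines the Lipschitz continuity of $\partial_x b$, the $O(1/N)$ bound on the measure-derivative coupling term, and Gr\"onwall with the particle propagation of chaos of Theorem~\ref{th: poc}, just as the paper does. The only cosmetic differences are that you apply Gr\"onwall pathwise before taking $q$-th moments (the paper takes moments first), and you use the decomposition $ab-a'b'=a(b-b')+(a-a')b'$ where the paper uses $(a-a')b+a'(b-b')$; both variants work equally well here.
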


\begin{remark}{\label{rk: Malliavin}}
The following observations stem from the propagation of chaos for Malliavin derivatives outlined above:

Firstly, the almost sure bound in Point 1 for the off-diagonal elements of the Malliavin matrix is feasible due to the boundedness of the derivatives of \( b \). If this condition were to be relaxed, only \( L^p \) results would be achievable, and even those would require additional work.

Secondly, it is worth noting that in Point 1, we achieve a convergence rate of \( N^{-q} \), which surpasses the classical rate typically associated with propagation of chaos. {A reader familiar with Malliavin calculus might be reminded of the regularizing effect of Malliavin derivatives: while the $L^p$ bounds on the increments of a fractional SDE are governed by the noise, the Malliavin derivative eliminates the noise contribution and improves such bounds (see, e.g., Proposition 4.1 in \cite{HuNuaZho19}). Indeed, one can think that a similar effect could facilitate a faster convergence in the propagation of chaos than in the convergence of interacting particles to independent ones.} While Point 1 seems to support this perspective, Point 2 does not.

The slower convergence rate in Point 2, compared to Point 1, stems from the fact that the diagonal elements of the Malliavin matrix are estimated using the classical propagation of chaos result from Theorem \ref{th: poc}, which only provides the standard rate of \( N^{-\frac{1}{2}} \). This also explains why the result in Point 2 is restricted to \( q \ge 2 \). Nevertheless, the proof clearly shows that any improvement in the convergence rate of the original propagation of chaos would directly yield a corresponding improvement in the rate for the Malliavin derivatives in Point 2.

\end{remark}


Let us note one can easily verify that 
for any \( i,j \in \{1, \ldots, N \} \) and any \( t \in [s, T] \), we have
\begin{equation}{\label{eq: Df barX}}
D_s^j f(\bar{X}^i_t, \bar{\mu}_t) = 1_{\{i = j\}} \partial_x f(\bar{X}^i_t, \bar{\mu}_t) D_s^j \bar{X}^i_t,
\end{equation}
\begin{equation}{\label{eq: DFx}}
D_s^j f(X^{i,N}_t, \mu_t^N) = \partial_x f(X^{i,N}_t, \mu_t^N) D_s^j X^{i,N}_t + \frac{1}{N} \sum_{k = 1}^N (\partial_\mu f)(X^{i,N}_t, \mu_t^N)(X_t^{k,N}) D_s^j X^{k,N}_t.
\end{equation}
Then, from the propagation of chaos for Malliavin derivatives as in Theorem \ref{cor: PoC-deriv}, one can obtain the following corollary, whose proof is given in Section \ref{s: proof main}. We state these bounds here for simplicity, as they will be repeatedly used in the proofs of our main results.

\begin{corollary}{\label{cor: Df}}
Let $f: \R \times \mathcal{P}_2(\R)$ satisfy Assumption \ref{as: deriv}(f). Then, under Assumptions \ref{as: lip}, \ref{as: bound}, \ref{as: moments}, and \ref{as: deriv}(b), there exists a constant \( c > 0 \) such that, for any \( i, j \in \{1, \dots, N \} \), the following holds for any $s \in [0, T]$:
\begin{enumerate}
    \item For any $q \ge 1$, $$\sup_{t \in [s, T]} |D_s^j f(X^{i,N}_t, \mu_t^N) - D_s^j f(\bar{X}^i_t, \bar{\mu}_t)|^q \le c \left( \frac{1}{N} \right)^q \qquad \text{for} \ j \neq i.$$
    \item For any $q \ge 2$,  $$  \E \Big[ \sup_{t \in [s, T]} |D^i_s f({X}^{i,N}_t, \mu_t^N) - D^i_s f(\bar{X}^i_t, \bar{\mu}_t)|^q \Big] \le c \left( \frac{1}{\sqrt{N}} \right).$$
\end{enumerate}
\end{corollary}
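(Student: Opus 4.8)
The plan is to start directly from the two explicit expressions \eqref{eq: Df barX} and \eqref{eq: DFx} for the Malliavin derivatives of $f$ at the independent and the interacting systems, subtract them, and estimate the resulting terms one by one. The three inputs I would use throughout are: the uniform boundedness and Lipschitz continuity of $\partial_x f$ and $\partial_\mu f$ from Assumption \ref{as: deriv}(f); the classical propagation of chaos for the particles and their empirical measures from Theorem \ref{th: poc}; and the propagation of chaos for the Malliavin derivatives from Theorem \ref{cor: PoC-deriv}, whose two regimes (the almost-sure $N^{-q}$ off-diagonal rate in Point 1 and the $L^q$ diagonal rate $N^{-1/2}$ in Point 2) dictate the final rates. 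I would also invoke the uniform, deterministic (Gronwall) bounds on the Malliavin derivatives $D_s^i \bar X^i_t$ and $D_s^j X^{k,N}_t$ supplied by Lemma \ref{l: deriv-bds}.

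For Point 1 ($j\neq i$), the right-hand side of \eqref{eq: Df barX} vanishes, so it suffices to bound $|D_s^j f(X^{i,N}_t,\mu_t^N)|$ using \eqref{eq: DFx}. In the first term, $\partial_x f$ is bounded and $|D_s^j X^{i,N}_t|\le c/N$ by Point 1 of Theorem \ref{cor: PoC-deriv}, so this contribution is $O(1/N)$ uniformly in $t$. In the measure term I would split the sum over $k$: the single index $k=j$ gives a factor $\tfrac1N$ times the bounded diagonal derivative $D_s^j X^{j,N}_t$, hence $O(1/N)$; the remaining $N-1$ indices $k\neq j$ are off-diagonal, so $|D_s^j X^{k,N}_t|\le c/N$ again by Point 1, and each contributes $O(1/N^2)$, summing to $O(1/N)$. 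Since all these bounds are almost sure and uniform in $t$, this yields the a.s. estimate $\sup_{t}|D_s^j f(X^{i,N}_t,\mu_t^N)|^q\le c(1/N)^q$.

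For Point 2 ($j=i$) I would write the difference as $A+B$, where $A:=\partial_x f(X^{i,N}_t,\mu_t^N)D_s^i X^{i,N}_t-\partial_x f(\bar X^i_t,\bar\mu_t)D_s^i\bar X^i_t$ and $B:=\tfrac1N\sum_{k}(\partial_\mu f)(X^{i,N}_t,\mu_t^N)(X^{k,N}_t)D_s^i X^{k,N}_t$ is the measure-derivative term, which has no counterpart in \eqref{eq: Df barX}. For $A$ I would add and subtract $\partial_x f(X^{i,N}_t,\mu_t^N)D_s^i\bar X^i_t$: the first piece uses boundedness of $\partial_x f$ together with Point 2 of Theorem \ref{cor: PoC-deriv} ($L^q$-rate $N^{-1/2}$), and the second uses the Lipschitz property of $\partial_x f$, the boundedness of $D_s^i\bar X^i_t$, and Theorem \ref{th: poc} applied to $|X^{i,N}_t-\bar X^i_t|$ and $W_2(\mu_t^N,\bar\mu_t)$, again giving $N^{-1/2}$ in $L^q$. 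For $B$ I would separate $k=i$ from $k\neq i$: the term $k=i$ carries the prefactor $\tfrac1N$ times bounded factors and is $O(1/N)$, while for $k\neq i$ the derivative $D_s^i X^{k,N}_t$ is off-diagonal, hence $O(1/N)$ by Point 1, so $B=O(1/N)$ almost surely. Collecting the contributions gives $\E[\sup_t|D^i_s f({X}^{i,N}_t, \mu_t^N) - D^i_s f(\bar{X}^i_t, \bar{\mu}_t)|^q]\le c/\sqrt N$.

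The routine part is the bookkeeping; the one genuinely delicate point, which I would emphasise, is the measure-derivative term $B$: it is present only in the interacting system and is controllable precisely because the off-diagonal Malliavin derivatives decay at the improved rate $N^{-1}$ (Point 1 of Theorem \ref{cor: PoC-deriv}) rather than the naive $N^{-1/2}$. Keeping track of which index is diagonal inside the two sums, so that the improved off-diagonal rate is applied where available and the $\tfrac1N$ prefactor absorbs the single diagonal term, is the main thing to get right.
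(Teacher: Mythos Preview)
Your proposal is correct and follows essentially the same route as the paper: both isolate the measure-derivative contribution (your $B$), bound it almost surely by $c/N$ via the off-diagonal Malliavin derivative estimates, and then split the remaining $\partial_x f$ contribution by adding and subtracting a cross term so as to invoke Theorem~\ref{th: poc} and Theorem~\ref{cor: PoC-deriv}, Point~2. The only cosmetic differences are which cross term is inserted (you pivot through $\partial_x f(X^{i,N}_t,\mu_t^N)D_s^i\bar X^i_t$, the paper through $\partial_x f(\bar X^i_t,\bar\mu_t)D_s^i X^{i,N}_t$) and that the paper cites Lemma~\ref{l: deriv-bds}, Point~2, where you cite the equivalent Theorem~\ref{cor: PoC-deriv}, Point~1.
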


\section{Main results}{\label{s: main}}

\subsection{Asymptotic properties of the fake-stimator $\tilde{\theta}_N$}{\label{s: fake}}

We begin by establishing the consistency of \(\tilde{\theta}_N\), as stated in the following theorem. The proof, detailed in Section \ref{s: proof main}, relies on Propositions \ref{prop: denominator} and \ref{prop: conv num} presented below.

\begin{theorem}{(Consistency)}{\label{th: consistency}}
Let $H \ge \frac{1}{2}$.
Assume that Assumptions \ref{as: lip}, \ref{as: bound}, \ref{as: moments}, \ref{as: identif}, \ref{as: denominator} and \ref{as: deriv}(b) hold. Then, $\tilde{\theta}_N$ defined in \eqref{eq: def estimator} is consistent in probability, meaning that
$$\tilde{\theta}_N \xrightarrow{\mathbb{P}} \theta^0 \quad \text{as} \quad N \rightarrow \infty.$$
\end{theorem}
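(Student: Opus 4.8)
The plan is to normalise the centred estim-actor. Using $\Psi_N^{-1}=\frac1N(\frac1N\Psi_N)^{-1}$ in \eqref{eq: def estimator}, write
\[
\tilde\theta_N-\theta^0=\Big(\tfrac1N\Psi_N\Big)^{-1}Z_N,\qquad Z_N:=\frac1N\sum_{i=1}^N\int_0^T b(X_t^{i,N},\mu_t^N)\,\sigma\,dB_t^{i,H},
\]
and prove separately that $\frac1N\Psi_N$ converges in probability to an invertible deterministic matrix $\Psi$ (Proposition \ref{prop: denominator}), while the Skorohod ``numerator'' $Z_N$ tends to $0$ in probability (Proposition \ref{prop: conv num}). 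Since matrix inversion is continuous on the set of invertible matrices, $(\frac1N\Psi_N)^{-1}=O_{\mathbb P}(1)$, and Slutsky's lemma then gives $\tilde\theta_N-\theta^0=(\frac1N\Psi_N)^{-1}Z_N\xrightarrow{\mathbb P}0$.

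For the denominator I would first swap the interacting entries $b_l(X_t^{i,N},\mu_t^N)b_j(X_t^{i,N},\mu_t^N)$ for their independent counterparts $b_l(\bar X_t^i,\bar\mu_t)b_j(\bar X_t^i,\bar\mu_t)$; the $L^1$ error is controlled, via Cauchy--Schwarz, by the Lipschitz property (Assumption \ref{as: lip}), the linear growth \eqref{eq: linear b}, the uniform moment bounds on the particles, and propagation of chaos (Theorem \ref{th: poc}), and therefore vanishes. The resulting average $\frac1N\sum_i\int_0^T b_l(\bar X_t^i,\bar\mu_t)b_j(\bar X_t^i,\bar\mu_t)\,dt$ is an empirical mean of i.i.d.\ terms, so the law of large numbers gives $\frac1N\Psi_N\xrightarrow{\mathbb P}\Psi$ with $\Psi_{lj}=\mathbb E[\int_0^T b_l(\bar X_t,\bar\mu_t)b_j(\bar X_t,\bar\mu_t)\,dt]$. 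Invertibility of $\Psi$ is where Assumptions \ref{as: identif} and \ref{as: denominator} enter: if $v^{T}\Psi v=0$ then $\mathbb E[\int_0^T\langle v,b(\bar X_t,\bar\mu_t)\rangle^2\,dt]=0$, so $\langle v,b(\cdot,\cdot)\rangle$ vanishes on the law of $(\bar X_t,\bar\mu_t)$, which by identifiability forces $v=0$.

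The numerator is the crux, and the main obstacle, because $Z_N$ is built from divergence (Skorohod) integrals rather than ordinary integrals. Writing $\delta^i$ for the divergence with respect to $B^{i,H}$, and setting $u^i_t:=b_m(X_t^{i,N},\mu_t^N)$, $\bar u^i_t:=b_m(\bar X_t^i,\bar\mu_t)$, I estimate $\mathbb E[\|Z_N\|^2]$ componentwise. For the diagonal terms the Skorohod isometry for $H\ge\frac12$ gives
\[
\mathbb E[\delta^i(u^i)^2]\le \mathbb E\big[\|u^i\|_{\mathcal H}^2\big]+\mathbb E\big[\|D^i u^i\|_{\mathcal H\otimes\mathcal H}^2\big],
\]
which is bounded uniformly in $N$ by the linear growth \eqref{eq: linear b}, the moment bounds and the Malliavin-derivative bounds of Lemma \ref{l: deriv-bds}; hence the diagonal contributes $O(1/N)$. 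For the off-diagonal terms $i\neq j$ I use the decomposition $\delta^i(u^i)=\delta^i(\bar u^i)+\delta^i(u^i-\bar u^i)$: the leading product vanishes, $\mathbb E[\delta^i(\bar u^i)\delta^j(\bar u^j)]=0$, since $\delta^i(\bar u^i)$ and $\delta^j(\bar u^j)$ are independent (they are measurable with respect to the disjoint noises $B^{i,H},B^{j,H}$) and centred, while each remaining product is controlled by Cauchy--Schwarz together with the isometry bound $\mathbb E[\delta^i(u^i-\bar u^i)^2]=O(N^{-1/2})$, which follows from Theorem \ref{th: poc} and Corollary \ref{cor: Df}. Summing over the $N(N-1)$ off-diagonal pairs and dividing by $N^2$ still yields a vanishing contribution, so $\mathbb E[\|Z_N\|^2]\to0$ and $Z_N\xrightarrow{\mathbb P}0$. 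The delicate points are establishing the uniform-in-$N$ moment and Malliavin-derivative bounds needed to apply the isometry, and verifying that the propagation-of-chaos estimates for the derivatives are compatible with the $\mathcal H\otimes\mathcal H$ inner products appearing in the isometry when $H>\frac12$.
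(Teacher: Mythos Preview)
Your proposal is correct and follows essentially the same route as the paper. The treatment of the denominator via propagation of chaos followed by the law of large numbers, together with the invertibility argument from Assumptions \ref{as: identif}--\ref{as: denominator}, matches Proposition \ref{prop: denominator} exactly.

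For the numerator the paper organises the argument slightly differently: rather than expanding $\mathbb E[\|Z_N\|^2]$ into diagonal and off-diagonal pairs as you do, it first decomposes $\frac1N Z^N=\frac1N\bar Z^N+\frac1N(Z^N-\bar Z^N)$, handles the i.i.d.\ part $\frac1N\bar Z^N$ by the variance-of-an-average identity, and then bounds $\|\frac1N(Z^N-\bar Z^N)\|_{L^2}$ crudely by Cauchy--Schwarz and symmetry, reducing to $\|Z^{1,N}_m-\bar Z^{1,N}_m\|_{L^2}\to 0$. Both arrangements boil down to the same ingredients---Meyer's inequality \eqref{eq: Meyer} (what you call the ``Skorohod isometry'' is in fact this one-sided bound), the uniform integrability from Lemma \ref{l: L2-bound-loclip} to justify dominated convergence, and the propagation of chaos for both the particles (Theorem \ref{th: poc}) and their Malliavin derivatives (Corollary \ref{cor: Df})---so the difference is purely cosmetic.
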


\begin{remark}{\label{rk: H>1/4}}
Although technical, the consistency of the fake-stimator can also be established for \( H \in (\frac{1}{4}, \frac{1}{2}) \) by leveraging the linear operator \( K_H \), in a similar way as in \cite{HuNuaZho19}. The condition \( H > \frac{1}{4} \) is nevertheless a technical requirement necessary to demonstrate that the Skorohod integral appearing in the error term lies in \( L^2 \) (as proven in our case in Lemma \ref{l: L2-bound-loclip} below). This condition also facilitates the application of the dominated convergence theorem in Proposition \ref{prop: conv num}. Whether this result can be extended to any \( H \in (0, 1) \) remains an open question.
\end{remark}

The proof of the consistency stated above relies heavily on the explicit form of $\tilde{\theta}_N$, allowing us to express it as
\begin{equation}{\label{eq: est for consistency}}
\tilde{\theta}_N - \theta^0 = \Psi_N^{-1}\cdot{\sum_{i=1}^N \int_0^T b(X_t^{i,N}, \mu_t^N) \sigma dB_t^{i,H}}. 
\end{equation}
Due to the propagation of chaos, we can approximate the interacting particle system with independent particles, leading to the convergence in probability of $\Psi_N$ at a rate of $\frac{1}{N}$, as detailed in the following proposition.

\begin{proposition}{\label{prop: denominator}}
Assume that Assumptions \ref{as: lip}-\ref{as: moments} hold and that $g: \R \times \mathcal{P}_2(\R) \rightarrow \R$ is a locally Lipschitz function according to \eqref{eq: def loc lip}. Then, the following convergence in probability holds:
$$\frac{1}{N} \sum_{i = 1}^N \int_0^T g(X_s^{i,N}, \mu_s^N) ds \xrightarrow{\mathbb{P}} \int_0^T \E[g(\bar{X}_s, \bar{\mu}_s)] ds.$$
\end{proposition}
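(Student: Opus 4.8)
The plan is to compare the interacting average with the corresponding average over the i.i.d.\ system $(\bar{X}^i)_{i=1}^N$ of solutions to the McKean--Vlasov equation \eqref{eq: McK}, each $\bar{X}^i$ being driven by the same fractional Brownian motion $B^{i,H}$ as the particle $X^{i,N}$. Concretely, I would write
$$
\frac{1}{N}\sum_{i=1}^N \int_0^T g(X_s^{i,N}, \mu_s^N)\,ds - \int_0^T \E[g(\bar{X}_s, \bar{\mu}_s)]\,ds = A_N + B_N,
$$
where
$$
A_N := \frac{1}{N}\sum_{i=1}^N \int_0^T \big(g(X_s^{i,N}, \mu_s^N) - g(\bar{X}_s^{i}, \bar{\mu}_s)\big)\,ds, \qquad B_N := \frac{1}{N}\sum_{i=1}^N \int_0^T g(\bar{X}_s^{i}, \bar{\mu}_s)\,ds - \int_0^T \E[g(\bar{X}_s, \bar{\mu}_s)]\,ds.
$$
The term $A_N$ is controlled by propagation of chaos, while $B_N$ is a law-of-large-numbers term. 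Since $L^1$ convergence implies convergence in probability, it suffices to show $\E[|A_N|] \to 0$ and $B_N \xrightarrow{\P} 0$.

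For $A_N$, I would use the local Lipschitz property \eqref{eq: def loc lip} of $g$ with $(x_1,\mu_1) = (X_s^{i,N}, \mu_s^N)$ and $(x_2,\mu_2) = (\bar{X}_s^{i}, \bar{\mu}_s)$, and then apply the Cauchy--Schwarz inequality to separate the Lipschitz increment $|X_s^{i,N} - \bar{X}_s^{i}| + W_2(\mu_s^N, \bar{\mu}_s)$ from the polynomial growth factor $1 + |X_s^{i,N}|^k + |\bar{X}_s^{i}|^k + W_2^l(\mu_s^N, \delta_0) + W_2^l(\bar{\mu}_s, \delta_0)$. The expectation of the squared increment is bounded by $c\,N^{-1/2}$ uniformly in $i$ and $s$ by Theorem \ref{th: poc} (applied with $q=2$ to both the particle distance and the Wasserstein distance of the empirical measures), while the polynomial factor is bounded in $L^2$ uniformly in $N$, $i$, and $s$ by the standard moment estimates $\sup_N \max_i \E[\sup_{t} |X_t^{i,N}|^{q}] < \infty$ and the analogous bounds for $\bar{X}^i$, $\mu^N_t$ and $\bar{\mu}_t$, which follow from the linear growth \eqref{eq: linear b} together with Assumption \ref{as: moments} via a Gronwall argument. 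Averaging over $i$ and integrating over $[0,T]$ then yields $\E[|A_N|] \le c\,N^{-1/4} \to 0$.

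For $B_N$, the key observation is that the processes $\bar{X}^i$ are i.i.d.\ copies of the solution to \eqref{eq: McK} and that $\bar{\mu}_s$ is a deterministic measure; hence the random variables $Z_i := \int_0^T g(\bar{X}_s^{i}, \bar{\mu}_s)\,ds$ are i.i.d. The polynomial growth of $g$ and the moment bounds on $\bar{X}^i$ ensure $Z_i \in L^2$, so the weak law of large numbers gives $\frac{1}{N}\sum_{i=1}^N Z_i \xrightarrow{\P} \E[Z_1]$. It remains to identify $\E[Z_1] = \E\big[\int_0^T g(\bar{X}_s, \bar{\mu}_s)\,ds\big] = \int_0^T \E[g(\bar{X}_s, \bar{\mu}_s)]\,ds$, where the interchange of expectation and time integral is justified by Fubini's theorem using the uniform integrability provided by the same moment bounds. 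Combining the two steps gives the claim.

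The main obstacle is the treatment of $A_N$: because $g$ is only locally Lipschitz with polynomial growth rather than globally Lipschitz, one cannot apply the propagation of chaos rate directly and must instead interpolate, via Cauchy--Schwarz, between the $L^2$ propagation of chaos bound and uniform-in-$N$ moment estimates for the particles and their empirical measures. Establishing these uniform moment bounds (for both $\sup_t |X_t^{i,N}|$ and $\sup_t W_2(\mu_t^N,\delta_0)$, uniformly in $N$) is the essential technical ingredient; everything else is a routine combination of Theorem \ref{th: poc}, the law of large numbers, and Fubini's theorem.
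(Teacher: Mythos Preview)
Your proposal is correct and follows essentially the same route as the paper: decompose into the propagation-of-chaos term $A_N$ (controlled via the local Lipschitz bound, Cauchy--Schwarz, the uniform moment estimates of Lemma \ref{l: moments}, and Theorem \ref{th: poc}) and the law-of-large-numbers term $B_N$ for the i.i.d.\ particles. The paper's argument is slightly more concise in that it does not track the explicit rate $N^{-1/4}$ for $A_N$, but the ingredients and structure are identical.
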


Thanks to Proposition \ref{prop: denominator}, proving consistency becomes straightforward once we show that 
$$\frac{1}{N} \sum_{i=1}^N \int_0^T b(X_t^{i,N}, \mu_t^N) \sigma  dB_t^{i,H}$$
converges to zero in probability, for which the propagation of chaos plays a crucial role. 
Specifically, let us define:
$$\sum_{i=1}^N \int_0^T b(X_t^{i,N}, \mu_t^N) \sigma  dB_t^{i,H} =: Z^N =: \sum_{i=1}^N Z^{i,N},$$
and similarly,
$$\sum_{i=1}^N \int_0^T b(\bar{X}_t^{i}, \bar{\mu}_t) \sigma dB_t^{i,H} =: \bar{Z}^N =: \sum_{i=1}^N \bar{Z}^{i,N}.$$
{\change Recall that the function  
\[
b: \R \times \mathcal{P}_2(\R) \rightarrow \R^p, \quad (x, \mu) \mapsto (b_1(x, \mu), \dots, b_p(x, \mu))
\]
defines a vector-valued drift. Accordingly, we denote \( Z^N = (Z_1^N, \dots, Z_p^N) \), where for each \( m \in \{1, \dots, p\} \),
\[
Z_m^N = \sum_{i=1}^N \int_0^T b_m(X_t^{i,N}, \mu_t^N)\, \sigma \, dB_t^{i,H}.
\]
An analogous notation applies to \( \bar{Z}^N \).
}

Our results will then follow by showing that both \( \frac{1}{N} \bar{Z}^N \) and \( \frac{1}{N}(Z^N - \bar{Z}^N) \) converge to 0 in \( L^2 \), and thus in probability, as \( N \to \infty \). The detailed proofs will be provided in Section \ref{s: proof main}.

\begin{proposition}{\label{prop: conv num}}
Let \( H \in [1/2, 1) \). Suppose that Assumptions \ref{as: lip}, \ref{as: bound}, \ref{as: moments}, \ref{as: deriv}(b) hold. Then  
$$\frac{1}{N} Z^N \xrightarrow{L^2} 0 \qquad \text{as } N \to \infty.$$
\end{proposition}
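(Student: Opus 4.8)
The plan is to treat each scalar component $Z_m^N$, $m\in\{1,\dots,p\}$, separately and to split $\frac1N Z_m^N=\frac1N\bar Z_m^N+\frac1N(Z_m^N-\bar Z_m^N)$, showing that each summand tends to $0$ in $L^2$. Writing $\delta^i$ for the divergence (Skorohod) integral with respect to $B^{i,H}$ and $\mathcal{H}_i$ for its associated Hilbert space, with $\langle f,g\rangle_{\mathcal{H}_i}=\int_0^T\int_0^T f(t)g(s)\phi(t,s)\,ds\,dt$ when $H>\frac12$ (and the usual $L^2([0,T])$ product when $H=\frac12$), I regard $Z_m^N=\sigma\sum_i\delta^i(b_m(X^{i,N}_\cdot,\mu^N_\cdot))$ and $\bar Z_m^N=\sigma\sum_i\delta^i(b_m(\bar X^i_\cdot,\bar\mu_\cdot))$ as sums of divergence integrals on the product Gaussian space $\bigoplus_{i=1}^N\mathcal{H}_i$. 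The main tool is the $L^2$ isometry $\E[\delta^i(u)\delta^j(w)]=\delta_{ij}\,\E[\langle u,w\rangle_{\mathcal{H}_i}]+\E[\langle D^j u,\,D^i w\rangle]$, whose applicability rests on the regularity in Assumption \ref{as: deriv}, the Malliavin differentiability of the particles (Lemma \ref{l: deriv-bds}), and the $L^2$-boundedness of the relevant Skorohod integrals (Lemma \ref{l: L2-bound-loclip}).

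For $\frac1N\bar Z_m^N$, the point is that each $\bar X^i$ is a functional of $B^{i,H}$ alone and $\bar\mu$ is deterministic, so $\delta^i(b_m(\bar X^i,\bar\mu))$ is $\sigma(B^{i,H})$-measurable with zero mean. Hence for $i\ne j$ the two factors are independent and centred, and every off-diagonal term in $\E[(\bar Z_m^N)^2]$ vanishes. The diagonal terms are bounded uniformly in $i,N$ by the isometry, using the linear growth of $b_m$ (Assumptions \ref{as: lip}--\ref{as: bound}) for the norm part and the boundedness of $\partial_x b_m$ together with \eqref{eq: Df barX} for the trace part. Thus $\E[(\bar Z_m^N)^2]\le cN$ and $\E[(\frac1N\bar Z_m^N)^2]\le c/N\to0$.

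For $\frac1N(Z_m^N-\bar Z_m^N)=\frac{\sigma}{N}\sum_i\delta^i(v_m^i)$ with $v_m^i:=b_m(X^{i,N}_\cdot,\mu^N_\cdot)-b_m(\bar X^i_\cdot,\bar\mu_\cdot)$, I apply the isometry to $\E[(\sum_i\delta^i(v_m^i))^2]$ and separate three contributions. The norm part $\sum_i\E[\|v_m^i\|_{\mathcal{H}_i}^2]$ is controlled by $\E[\sup_t|v_m^i(t)|^2]\le c/\sqrt N$, from the Lipschitz property of $b_m$ and Theorem \ref{th: poc}, giving $O(\sqrt N)$. The diagonal trace part involves $D^i v_m^i=D^i b_m(X^{i,N},\mu^N)-D^i b_m(\bar X^i,\bar\mu)$ and is bounded via Corollary \ref{cor: Df}(2), which yields $\E[\sup_{t}|D^i_s v_m^i(t)|^2]\le c/\sqrt N$ uniformly in $s$, producing another $O(\sqrt N)$. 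The off-diagonal trace part pairs $D^j v_m^i$ with $D^i v_m^j$ for $i\ne j$; here $D^j b_m(\bar X^i,\bar\mu)=0$ by \eqref{eq: Df barX}, so $D^j v_m^i=D^j b_m(X^{i,N},\mu^N)$, which by Corollary \ref{cor: Df}(1) is bounded almost surely by $c/N$. Each of the $\approx N^2$ off-diagonal terms is therefore $O(N^{-2})$ and their sum is $O(1)$. Altogether $\E[(Z_m^N-\bar Z_m^N)^2]\le c\sqrt N$, so $\E[(\frac1N(Z_m^N-\bar Z_m^N))^2]\le c/N^{3/2}\to0$; combining with the previous step finishes the proof.

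The main obstacle is the careful bookkeeping of the divergence isometry on the product space, and in particular distinguishing the diagonal from the off-diagonal contributions in the particle index. The binding constraint is the diagonal ($i=j$) part, where only the classical $N^{-1/2}$ rate of propagation of chaos is available, both for the particles and, through Corollary \ref{cor: Df}(2), for their Malliavin derivatives; this is exactly why only an $L^2$ bound is claimed, and it is saved solely by the $N^{-2}$ prefactor. The off-diagonal cross terms are the genuinely new difficulty created by the interaction (the derivative $D^j X^{i,N}$ being nonzero for $j\ne i$), and they are tamed by the much faster almost-sure rate $c/N$ of Corollary \ref{cor: Df}(1), without which the $N^2$ cross terms could not be absorbed. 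I would also check at the outset that the integrands lie in the domain of $\delta^i$ and in $\mathbb{D}^{1,2}(\mathcal{H}_i)$, which follows from the chain-rule expressions \eqref{eq: Df barX}--\eqref{eq: DFx} and the boundedness conditions of Assumption \ref{as: deriv}.
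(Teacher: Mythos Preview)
Your argument is correct, but it takes a different route from the paper for the second piece $\frac{1}{N}(Z_m^N-\bar Z_m^N)$. The paper does not expand the full sum $\sum_i\delta^i(v_m^i)$ via the multidimensional isometry; instead it first applies Cauchy--Schwarz and the exchangeability of the particles to get
\[
\E\Big[\Big(\frac1N\sum_{i}(Z^{i,N}_m-\bar Z^{i,N}_m)\Big)^2\Big]\le \|Z^{1,N}_m-\bar Z^{1,N}_m\|_{L^2(\Omega)}^2,
\]
thus reducing the problem to a single one-dimensional Skorohod integral $\delta^1(v_m^1)$. It then bounds $\E[\delta^1(v_m^1)^2]$ by Meyer's inequality~\eqref{eq: Meyer}, and uses Lemma~\ref{l: L2-bound-loclip} to justify dominated convergence so that the propagation of chaos (Theorem~\ref{th: poc} for the $\|v_m^1\|_{\mathcal H}^2$ term and Corollary~\ref{cor: Df} for the $\|D^1 v_m^1\|_{\mathcal H\otimes\mathcal H}^2$ term) can be applied pointwise under the integrals.

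Your approach trades simplicity for sharpness: by keeping the full sum and splitting the trace into diagonal and off-diagonal parts, you obtain the quantitative bound $\E[(Z_m^N-\bar Z_m^N)^2]\le c\sqrt N$, hence $\E[(\frac1N(Z_m^N-\bar Z_m^N))^2]\le cN^{-3/2}$, whereas the paper's Cauchy--Schwarz step only yields the weaker rate $\|Z_m^{1,N}-\bar Z_m^{1,N}\|_{L^2}^2=O(N^{-1/2})$. Your diagonal/off-diagonal bookkeeping, with Corollary~\ref{cor: Df}(1) absorbing the $N^2$ cross terms, is precisely the mechanism the paper deploys later in the proof of Theorem~\ref{th: fluctuations} (via the Biagini isometry), so in effect you have anticipated that argument. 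The paper's route, on the other hand, avoids the off-diagonal analysis entirely at this stage and needs only the one-dimensional Meyer bound plus dominated convergence; it is shorter but non-quantitative.
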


With Proposition \ref{prop: conv num}, we conclude the section dedicated to the consistency of our estim-actor. As we will see in the next subsection, proving a central limit theorem for the fake-stimator presents greater challenges. The process unfolds as an (almost) unending sequence of hope and despair, complete with numerous plot twists.

\subsubsection{The hunt for a central limit theorem}{\label{s: hunt}}

We now introduce some additional notation that will be useful in characterizing the variance of the limiting Gaussian distribution obtained in our main results. Specifically, let:  
\begin{align}{\label{eq: def vari}}
\Sigma^2 &:= \sigma ^2 \Big(\int_0^T \int_0^T \mathbb{E}[b_i(\bar{X}_s, \bar{\mu}_s) b_j(\bar{X}_t, \bar{\mu}_t)] \phi(s,t) \, ds \, dt \\
& \qquad + \int_0^T \int_0^T \int_0^T \int_0^T \mathbb{E}[D_v b_i(\bar{X}_s, \bar{\mu}_s) D_u b_j(\bar{X}_t, \bar{\mu}_t)] \phi(t,v) \phi(s,u) \, dv \, du \, ds \, dt\Big)_{i, j=1,\dots , p}, \nonumber 
\end{align}
\begin{equation}{\label{eq: def tilde vari}}
\tilde{\Sigma}^2 := \Psi^{-2}\Sigma^2, \text{ where}
\end{equation}
\begin{equation*}
\Psi=\left(\int_0^T \E [b_i(\bar{X}_s,\bar{\mu}_s)b_j(\bar{X}_s,\bar{\mu}_s)]ds\right)_{i,j=1,\dots , p},
\end{equation*}
with \(\phi(\cdot,\cdot)\) as introduced in \eqref{eq: def phi}. Additionally, since the particles are i.i.d., we simplify notation by writing \(\bar{X}\) for \(\bar{X}^1\) and \(D\bar{X}\) for \(D^1 \bar{X}^1\). Note that under Assumptions \ref{as: identif} and \ref{as: denominator} the matrix $\Psi$ is invertible. \\
\\
We begin our hunt by observing that, if \(\tilde{\theta}_N\) were constructed using independent particles, the central limit theorem for sums of independent and identically distributed random variables would directly ensure the asymptotic Gaussianity of the numerator. Moreover, the convergence in probability stated in Proposition \ref{prop: denominator} would naturally follow from the law of large numbers for i.i.d. random variables. In this scenario, Slutsky’s theorem could be applied straightforwardly, immediately establishing the asymptotic normality of the least squares fake-stimator constructed from independent particles.

In the case of interacting particle systems, however, the situation is more intricate. While Proposition \ref{prop: denominator} ensures that interactions have a negligible effect on the denominator, the same cannot be readily said for the numerator. Specifically, while we have  
\[ \frac{1}{\sqrt{N}} \bar{Z}^N \xrightarrow{\mathcal{L}} N(0, {\Sigma}^2), \]  
this is insufficient to conclude that  
\[ \frac{1}{\sqrt{N}} Z^N \xrightarrow{\mathcal{L}} N(0, {\Sigma}^2), \]  
{\change 
 as the propagation of chaos ensures that the error incurred when replacing \( \bar{Z}^N \) with \( Z^N \) vanishes as \( N \to \infty \), but not sufficiently fast for our purposes. As a result, the presence of interactions complicates the analysis of the asymptotic behavior of the numerator.

This naturally prompts the question of whether the convergence rate in the propagation of chaos can be improved. Recent work in \cite{Lac23} offers some optimism: the author shows that, in the Brownian setting and for interaction functions \( b \) that are linear in the measure component, an accelerated rate of convergence can indeed be achieved. This result has since been extended to general additive Gaussian noise in \cite{HuRam}.
 \\
\\
We were quite surprised (and excited!) to discover that the improved convergence rate of the off-diagonal Malliavin derivatives, established in Point 1 of Theorem \ref{cor: PoC-deriv}, is already sufficient to neutralize the effect of interactions in our setting.} As a result, \(\frac{1}{\sqrt{N}} Z^N\) converges to a zero-mean Gaussian with the same variance as that of the independent particles.  

Specifically, for any pair of random variables \((X, Y)\) with laws \((\mathcal{L}(X), \mathcal{L}(Y))\), we use the shorthand \(W_1(X, Y)\) to denote \(W_1(\mathcal{L}(X), \mathcal{L}(Y))\). Now, consider \(Z := \mathcal{N}(0, \operatorname{Id}_p)\). The following theorem holds. 

\begin{theorem}{\label{th: fluctuations}}
Let \( H \in (1/2, 1) \). Assume Assumptions \ref{as: lip}, \ref{as: bound}, \ref{as: moments}, \ref{as: deriv}(b) hold. Then, there exist constant $c> 0$ and $N_0 > 0$ such that, for any $N \ge N_0$, 
$$W_1\left(\frac{1}{\sqrt{N}}Z^N,\, \Sigma\cdot Z\right) \le c N^{-\frac{1}{8}},$$
with $\Sigma$ a square root matrix of $\Sigma^2$ from Equation \eqref{eq: def vari}.
\end{theorem}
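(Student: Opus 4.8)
The plan is to prove this quantitative CLT by the two‑step strategy dictated by the structure of $Z^N$: first replace the interacting numerator $\tfrac{1}{\sqrt N}Z^N$ by its independent‑particle counterpart $\tfrac{1}{\sqrt N}\bar Z^N$, and then run a genuine central limit theorem for the latter. Using the triangle inequality for $W_1$,
$$W_1\Big(\tfrac{1}{\sqrt N}Z^N,\ \Sigma Z\Big) \le W_1\Big(\tfrac{1}{\sqrt N}Z^N,\ \tfrac{1}{\sqrt N}\bar Z^N\Big) + W_1\Big(\tfrac{1}{\sqrt N}\bar Z^N,\ \Sigma Z\Big),$$
I would bound the two summands separately. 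Since $Z^N$ and $\bar Z^N$ live on the same probability space, the first summand is controlled by the coupling bound $W_1(\tfrac{1}{\sqrt N}Z^N,\tfrac{1}{\sqrt N}\bar Z^N)\le \tfrac{1}{\sqrt N}\E\|Z^N-\bar Z^N\|$, so everything reduces to an $L^2$ estimate of a difference of vector‑valued Skorohod integrals.

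The heart of the first step is the variance formula for Skorohod integrals (valid for $H>\tfrac12$). Writing $Z^N_m-\bar Z^N_m=\sum_i \delta^i(w^{i,m})$ with $w^{i,m}_t=\sigma\big(b_m(X^{i,N}_t,\mu^N_t)-b_m(\bar X^i_t,\bar\mu_t)\big)$ and $\delta^i$ the divergence with respect to $B^{i,H}$, the second moment splits into a ``norm'' term $\sum_i\E\langle w^{i,m},w^{i,m}\rangle_{\mathcal H}$ and a ``trace'' term $\sum_{i,i'}\E\langle D^{i'}w^{i,m},(D^{i}w^{i',m})^*\rangle_{\mathcal H\otimes\mathcal H}$. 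I would treat the diagonal ($i=i'$) and off‑diagonal ($i\neq i'$) contributions differently. On the diagonal, Theorem~\ref{th: poc} controls $w^{i,m}$ in $L^2$ and Point~2 of Corollary~\ref{cor: Df} controls $D^i w^{i,m}$, each at the propagation‑of‑chaos rate, producing a contribution of order $N^{-1/2}$ per particle and hence $O(N^{1/2})$ after summing over $i$. Off the diagonal, the integrands $D^{i'}w^{i,m}$ and $D^{i}w^{i',m}$ are off‑diagonal Malliavin derivatives, so the almost‑sure $O(1/N)$ bound of Point~1 of Corollary~\ref{cor: Df} applies; since the kernel $\phi\otimes\phi$ is integrable for $H>\tfrac12$, each of the $O(N^2)$ off‑diagonal pairs contributes $O(N^{-2})$, for a total of $O(1)$. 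Combining, $\E\|Z^N-\bar Z^N\|^2=O(N^{1/2})$, so this summand is of order $N^{-1/4}$, which in particular gives the claimed $N^{-1/8}$. This is precisely where the surprisingly fast off‑diagonal rate of Theorem~\ref{cor: PoC-deriv} is indispensable: with only the classical $N^{-1/2}$ rate the off‑diagonal sum would not be summable.

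For the second summand I would use that $\bar Z^{1,N},\dots,\bar Z^{N,N}$ are i.i.d., centered (Skorohod integrals have zero mean), and possess moments of all orders (by Assumption~\ref{as: moments}, the linear growth \eqref{eq: linear b}, and the boundedness of $D\bar X$ following from \eqref{eq: Ds barX} and the bounded $\partial_x b$ of Assumption~\ref{as: deriv}(b), via Meyer's inequalities). A direct computation with the Skorohod isometry identifies their common covariance with $\Sigma^2$ of \eqref{eq: def vari}: the norm part yields the first integral, and the trace part, after substituting \eqref{eq: Df barX} and expanding the $\mathcal H\otimes\mathcal H$ inner product (two copies of $\phi$), yields the second, fourfold integral. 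A quantitative multivariate CLT for normalized i.i.d.\ sums in $W_1$ (a Stein/Berry--Esseen bound in the fixed dimension $p$) then gives $W_1(\tfrac{1}{\sqrt N}\bar Z^N,\Sigma Z)=O(N^{-1/2})$, and adding the two bounds yields the stated rate.

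I expect the main obstacle to be the off‑diagonal trace term in the Skorohod variance formula for $Z^N-\bar Z^N$: because the empirical measure couples all particles, the cross derivatives $D^{i'}w^{i,m}$ with $i\neq i'$ are nonzero, and there are $N^2$ of them, so controlling their sum requires exactly the almost‑sure $N^{-1}$ rate of Point~1 of Corollary~\ref{cor: Df} rather than an $L^p$ or $N^{-1/2}$ bound. A secondary technical point is the covariance identification together with verifying that the Stein/Berry--Esseen step applies to the possibly degenerate target $\Sigma Z$; if $\Sigma^2$ is not invertible one must either argue non‑degeneracy from Assumptions~\ref{as: identif}--\ref{as: denominator} or mollify the target. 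A unified alternative, which more naturally explains the exponent $1/8$, would be to apply the multivariate Malliavin--Stein inequality directly to $\tfrac{1}{\sqrt N}Z^N=\delta(v)$ and to control the carr\'e‑du‑champ $\langle v^m,DF_{m'}\rangle$ around $\Sigma^2_{mm'}$; there the quantity is \emph{quadratic} in the particle increments, so one is forced to use $L^4$ propagation‑of‑chaos estimates, and the square root inherent in the Malliavin--Stein bound turns the resulting $L^2$‑concentration into the Wasserstein rate.
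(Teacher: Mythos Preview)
Your two–step strategy (triangle inequality, Berry–Esseen for $\tfrac{1}{\sqrt N}\bar Z^N$, and an $L^2$ coupling bound for $\tfrac{1}{\sqrt N}\|Z^N-\bar Z^N\|$) is exactly the one the paper follows, and your identification of the covariance $\Sigma^2$ and the use of the off–diagonal $O(1/N)$ rate from Point~1 of Corollary~\ref{cor: Df} are also correct. The only substantive difference is in how the quantity $\tfrac{1}{N}\E\|Z^N-\bar Z^N\|^2$ is estimated. The paper polarises it as $\E\|F_N\|^2-2\E\langle F_N,\bar F_N\rangle+\E\|\bar F_N\|^2$ and shows that each term equals the same constant $K$ up to $O(N^{-1/4})$, invoking the multidimensional isometry from \cite{Biagini} on $Z^N$, on $\bar Z^N$ and on their product separately; the $K$'s cancel, leaving $\E\|F_N-\bar F_N\|^2=O(N^{-1/4})$ and hence $W_1\le cN^{-1/8}$. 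You instead apply the isometry directly to the difference process $w^{i,m}=\sigma(b_m(X^{i,N},\mu^N)-b_m(\bar X^i,\bar\mu))$, so that \emph{both} factors in every product are already small: the norm term and the diagonal trace term each give $O(N^{-1/2})$ per particle via Theorem~\ref{th: poc} and Point~2 of Corollary~\ref{cor: Df}, while the off–diagonal trace term gives $O(N^{-2})$ per pair via Point~1 of Corollary~\ref{cor: Df}. This yields $\tfrac{1}{N}\E\|Z^N-\bar Z^N\|^2=O(N^{-1/2})$ and hence $W_1\le cN^{-1/4}$, a genuine improvement over the paper's $N^{-1/8}$.

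In short: your argument is correct, uses the same ingredients, and is more direct; by avoiding the polarisation step you do not lose a square root and you obtain a sharper rate than the one stated. Your closing remark that a direct Malliavin--Stein attack on $\tfrac{1}{\sqrt N}Z^N$ would more naturally produce the $N^{-1/8}$ exponent is a reasonable heuristic, but it is not the route the paper takes; the paper's $N^{-1/8}$ arises purely from taking the square root of an $O(N^{-1/4})$ bound on $\E\|F_N-\bar F_N\|^2$. The caveat you raise about possible degeneracy of $\Sigma^2$ in the Berry--Esseen step is not addressed in the paper either; it simply cites a multivariate Berry--Esseen bound without further comment.
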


As previously mentioned, the propagation of chaos for Malliavin derivatives enables us to establish the central limit theorem for the fake-stimator at the usual rate of \(\frac{1}{\sqrt{N}}\). In fact, the result in Theorem \ref{th: fluctuations} goes a step further by providing the convergence rate of the fluctuations. From the proof of Theorem \ref{th: fluctuations}, it becomes clear that the fluctuations comprise several terms. Some of these terms are shown to decay at a rate of \(\frac{1}{\sqrt{N}}\), leveraging the propagation of chaos for Malliavin derivatives. However, the slowest terms arise from the classical propagation of chaos, leading to a slower rate of \(N^{-\frac{1}{8}}\).  


From Theorem \ref{th: fluctuations}, a central limit theorem for \(\tilde{\theta}_N\) follows naturally as a direct application of Slutsky's theorem, combined with Proposition \ref{prop: denominator}. The detailed proof is deferred to Section \ref{s: proof main}.

\begin{theorem}{\label{th: CLT fakestimator}}
Let \( H \in (1/2, 1) \). Assume Assumptions \ref{as: lip}, \ref{as: bound}, \ref{as: moments}, \ref{as: identif}, \ref{as: denominator} and \ref{as: deriv}(b),  hold. Then,
\[
\sqrt{N}(\tilde{\theta}_N - \theta^0) \xrightarrow{\mathcal{L}} \mathcal{N}(0, \tilde{\Sigma}^2), \qquad N\rightarrow \infty
\]
with $\tilde{\Sigma}^2$ as in \eqref{eq: def tilde vari}.
\end{theorem}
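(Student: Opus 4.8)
The plan is to start from the exact identity \eqref{eq: est for consistency}, which gives $\tilde{\theta}_N - \theta^0 = \Psi_N^{-1} Z^N$, and to rewrite the rescaled error as
\[
\sqrt{N}(\tilde{\theta}_N - \theta^0) = \Bigl(\tfrac{1}{N}\Psi_N\Bigr)^{-1}\cdot \tfrac{1}{\sqrt{N}}Z^N.
\]
The two factors on the right will be handled separately: the first converges in probability to a deterministic invertible matrix, while the second converges in law to a centered Gaussian. The conclusion then follows from Slutsky's theorem in its multivariate form.

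First I would treat the denominator. Each entry of $\Psi_N$ is $\sum_{i=1}^N \int_0^T b_l(X_t^{i,N},\mu_t^N)\, b_j(X_t^{i,N},\mu_t^N)\,dt$, and the integrand $g := b_l b_j$ is locally Lipschitz in the sense of \eqref{eq: def loc lip}: each $b_m$ is globally Lipschitz by Assumption \ref{as: lip} and of linear growth by \eqref{eq: linear b}, and a product of two such functions satisfies \eqref{eq: def loc lip}. Proposition \ref{prop: denominator} therefore applies entrywise and yields $\frac{1}{N}\Psi_N \xrightarrow{\mathbb{P}} \Psi$. Under Assumptions \ref{as: identif} and \ref{as: denominator} the limiting matrix $\Psi$ is invertible, as already noted after \eqref{eq: def tilde vari}. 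Since matrix inversion is continuous on the open set of invertible matrices, the continuous mapping theorem gives $\bigl(\tfrac{1}{N}\Psi_N\bigr)^{-1}\xrightarrow{\mathbb{P}}\Psi^{-1}$, on the event of probability tending to one on which $\tfrac{1}{N}\Psi_N$ is invertible.

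For the numerator, Theorem \ref{th: fluctuations} gives $W_1\bigl(\tfrac{1}{\sqrt{N}}Z^N,\Sigma Z\bigr)\le cN^{-1/8}$ with $Z\sim\mathcal{N}(0,\mathrm{Id}_p)$; since convergence in the Wasserstein-$1$ distance implies convergence in distribution, $\tfrac{1}{\sqrt{N}}Z^N\xrightarrow{\mathcal{L}}\mathcal{N}(0,\Sigma^2)$. Combining the two displays through the multivariate Slutsky theorem — a sequence converging in probability to a constant matrix, multiplied by a sequence converging in law — yields
\[
\sqrt{N}(\tilde{\theta}_N-\theta^0)\xrightarrow{\mathcal{L}}\Psi^{-1}\,\mathcal{N}(0,\Sigma^2).
\]
The limit is the centered Gaussian with covariance $\Psi^{-1}\Sigma^2(\Psi^{-1})^T=\Psi^{-1}\Sigma^2\Psi^{-1}$, where we used that $\Psi$ is symmetric because $b_i b_j = b_j b_i$; this is exactly $\tilde{\Sigma}^2$ of \eqref{eq: def tilde vari}, completing the argument.

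Since all the analytical difficulty is already packaged into Theorem \ref{th: fluctuations} and Proposition \ref{prop: denominator}, both available to us, there is no genuine obstacle here. The only points requiring care are verifying that $b_l b_j$ meets the local-Lipschitz hypothesis of Proposition \ref{prop: denominator}, confirming the invertibility of $\Psi$ so that the continuous mapping theorem applies to the inverse, and checking that the matrix version of Slutsky's theorem is legitimate — i.e.\ that joint convergence is not needed because one factor converges to a deterministic limit.
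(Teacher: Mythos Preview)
Your proof is correct and follows essentially the same approach as the paper: rewrite $\sqrt{N}(\tilde{\theta}_N-\theta^0)=(\tfrac{1}{N}\Psi_N)^{-1}\cdot\tfrac{1}{\sqrt{N}}Z^N$, apply Proposition \ref{prop: denominator} to the first factor and Theorem \ref{th: fluctuations} to the second, then conclude via Slutsky. You are in fact slightly more careful than the paper in spelling out why $b_lb_j$ is locally Lipschitz, why $\Psi$ is invertible, and why the limiting covariance $\Psi^{-1}\Sigma^2\Psi^{-1}$ coincides with the paper's notation $\Psi^{-2}\Sigma^2$ (via the symmetry of $\Psi$).
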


\subsection{Alternative estimators}{\label{s: computable}}
As previously highlighted, the fake-stimator \(\tilde{\theta}_N\) introduced in \eqref{eq: def estimator} interprets the stochastic integral \(\int_0^T b(X_t^{i,N}, \mu_t^N) \sigma \, dB_t^{i,H}\) as a divergence-type (or Itô-Skorohod) integral, rendering it unobservable from the continuous paths of the particles. This observation motivated the introduction of the computable estimators \(\hat{\theta}_{N, \epsilon}\) and \(\hat{\theta}_N^{(fp)}\), as defined in \eqref{eq: def estimator increments} and \eqref{eq: def fp estimator}, respectively, along with the study of their asymptotic properties. Both estimators are derived using the relationship between the divergence integral and the Young integral, as established in \eqref{eq: link divergence Young} (see Proposition 5.2.3 of \cite{NuaBook}), which we recall below:
\begin{equation*}
\int_0^T b(X_t^{i,N}, \mu_t^N) \circ  dB_t^{i,H} = \int_0^T b(X_t^{i,N}, \mu_t^N) dB_t^{i,H} + \int_0^T \int_0^T D^i_s b(X_t^{i,N}, \mu_t^N) \phi(t,s) \, ds \, dt,
\end{equation*}
with \(\int_0^T b(X_t^{i,N}, \mu_t^N) \circ dB_t^{i,H}\) denoting the Stratonovich integral. Then, one can rewrite the fake-stimator in \eqref{eq: est for consistency} as
\begin{equation}{\label{eq: fake with strato}}
\tilde{\theta}_N = \Psi_N^{-1}\cdot{\sum_{i=1}^N \int_0^T b(X_t^{i,N}, \mu_t^N) \sigma \circ dX_t^{i,N}} - \Psi_N^{-1}\cdot \sum_{i=1}^N \int_0^T \int_0^T D^i_s b(X_t^{i,N}, \mu_t^N) \phi(t,s) \, ds \, dt.
\end{equation}

Thus, the favorable asymptotic results established for the estim-actor can be extended to practical, computable estimators, provided we can approximate \(D^i_s b(X_t^{i,N}, \mu_t^N)\).  

A first step toward addressing this question is presented in the following proposition, whose proof is detailed in Section \ref{s: proof main}.  

\begin{proposition}{\label{prop: approx Db}}  
Let \( H \geq \frac{1}{2} \). Assume that Assumptions \ref{as: lip}, \ref{as: bound}, \ref{as: moments}, and \ref{as: deriv}(b) hold. Then, there exist constants \(c, N_0 > 0\) such that, for any \(N \geq N_0\),  
\[
\big|D^i_s b(X_t^{i,N}, \mu_t^N) - \sigma \partial_x b(X_t^{i,N}, \mu_t^N) \exp\big(\int_s^t \langle \theta^0, \partial_x b(X_r^{i,N}, \mu_r^N) \rangle dr\big)1_{s \leq t} \big| \leq \frac{c}{N}.
\]  
\end{proposition}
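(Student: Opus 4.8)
The plan is to compare the true Malliavin derivative $D^i_s b(X^{i,N}_t,\mu^N_t)$ against the proposed proxy by peeling off the interaction terms, which all live off the diagonal and hence carry the fast $N^{-1}$ rate from Point~1 of Theorem~\ref{cor: PoC-deriv}. First I would apply the chain rule \eqref{eq: DFx} to write
\[
D^i_s b(X^{i,N}_t,\mu^N_t) = \partial_x b(X^{i,N}_t,\mu^N_t)\, D^i_s X^{i,N}_t + \frac{1}{N}\sum_{k=1}^N (\partial_\mu b)(X^{i,N}_t,\mu^N_t)(X^{k,N}_t)\, D^i_s X^{k,N}_t.
\]
The second sum is the crux of the simplification: for $k\neq i$, the off-diagonal derivatives $D^i_s X^{k,N}_t$ are bounded almost surely by $c/N$ (Point~1 of Theorem~\ref{cor: PoC-deriv}), and since $\partial_\mu b$ is uniformly bounded under Assumption~\ref{as: deriv}(b), the $N-1$ such terms contribute at most $\tfrac{1}{N}\cdot(N-1)\cdot c\cdot\tfrac{c}{N}=O(1/N)$; the single diagonal term $k=i$ in the sum contributes $\tfrac{1}{N}$ times a bounded quantity, again $O(1/N)$. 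Thus the entire empirical-measure correction is absorbed into the $c/N$ error, and we are reduced to comparing $\partial_x b(X^{i,N}_t,\mu^N_t)\, D^i_s X^{i,N}_t$ with the target $\sigma\,\partial_x b(X^{i,N}_t,\mu^N_t)\,\exp(\int_s^t\langle\theta^0,\partial_x b(X^{i,N}_r,\mu^N_r)\rangle\,dr)\,1_{s\le t}$.

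Since $\partial_x b$ is a common (uniformly bounded) factor, it suffices to bound
\[
\Big| D^i_s X^{i,N}_t - \sigma\exp\!\Big(\int_s^t \langle\theta^0,\partial_x b(X^{i,N}_r,\mu^N_r)\rangle\,dr\Big)1_{s\le t}\Big|.
\]
Here I would compare $D^i_s X^{i,N}_t$ with the solution $E^{i,N}_t$ of the \emph{reduced} linear equation obtained from \eqref{eq: DsX} by deleting the measure-derivative term, namely $E^{i,N}_t = \sigma + \int_s^t \langle\theta^0,\partial_x b(X^{i,N}_r,\mu^N_r)\rangle E^{i,N}_r\,dr$, whose explicit solution is exactly the proposed exponential times $\sigma$. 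Subtracting the two linear ODEs (in $t$), the difference $\Delta_t := D^i_s X^{i,N}_t - E^{i,N}_t$ satisfies a linear equation driven by the same bounded coefficient $\langle\theta^0,\partial_x b\rangle$ plus the forcing term $\tfrac1N\sum_{k=1}^N \langle\theta^0,(\partial_\mu b)(X^{i,N}_r,\mu^N_r)(X^{k,N}_r)\rangle D^i_s X^{k,N}_r$. This forcing term is controlled exactly as in the previous paragraph — the $k\neq i$ contributions are $O(1/N)$ pointwise by Point~1 of Theorem~\ref{cor: PoC-deriv} and uniform boundedness of $\partial_\mu b$, and the $k=i$ term carries the prefactor $1/N$ — so the forcing is bounded almost surely by $c/N$ uniformly on $[s,T]$. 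A Grönwall argument then propagates this $c/N$ forcing bound into the sup-norm bound $\sup_{t\in[s,T]}|\Delta_t|\le c/N$, using that the coefficient $\langle\theta^0,\partial_x b\rangle$ is bounded so that the Grönwall constant $\exp(c(T-s))$ is uniform.

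The main obstacle, and the reason this is not entirely routine, is ensuring the estimate is \emph{almost sure} and uniform in $t$ rather than merely an $L^p$ bound: one must verify that every term entering the forcing is controlled pathwise, which is precisely what the boundedness hypothesis in Assumption~\ref{as: deriv}(b) on $\partial_x b$ and $\partial_\mu b$ buys us (cf.\ the first observation in Remark~\ref{rk: Malliavin}). A secondary technical point is that one should justify the differentiated (ODE) form of \eqref{eq: DsX} and the interchange of $D^i_s$ with the $dr$-integral, which follows from the Malliavin differentiability established in Lemma~\ref{l: deriv-bds}; and one must keep track that the bounded common factor $\partial_x b$ in front does not inflate the constant. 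Collecting the $O(1/N)$ bound on the measure-correction term together with the $O(1/N)$ Grönwall bound on $\Delta_t$, and multiplying by the uniform bound on $\partial_x b$, yields the claimed estimate for all $N$ large enough.
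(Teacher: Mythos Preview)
Your proposal is correct and follows essentially the same route as the paper: decompose $D^i_s b$ via the chain rule \eqref{eq: DFx}, discard the measure-derivative sum using the $c/N$ off-diagonal bound (which you cite from Theorem~\ref{cor: PoC-deriv}; the paper invokes the equivalent Point~2 of Lemma~\ref{l: deriv-bds}), then compare $D^i_s X^{i,N}_t$ against the explicit exponential solution of the reduced linear ODE and close with Gr\"onwall. The paper names your $E^{i,N}_t$ as $Z^i_{s,t}$, but the argument is identical.
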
  

However, the proposition above does not fully resolve the issue, as the proposed approximation of the Malliavin derivative depends on \(\theta^0\), the unknown parameter we aim to estimate, making it inaccessible. Thus, an alternative approach to approximate the exponential term is required.  

A first idea involves the derivatives with respect to the diffusion's initial condition. A similar method, albeit without interaction, was employed in Proposition 2.7 of \cite{ComMar19}. As we will see, incorporating interaction introduces several additional challenges.

Before proceeding, let us introduce some notation. For any \( i \in \{1, \ldots, N \} \), let \( X^{i, x_0^i, N} \) denote a solution of \eqref{eq: model} with initial condition \( x_0^i \in \mathbb{R} \). We also denote by \( X^{k, x_0^i, N} \), for \( k \in \{1, \ldots, N \} \), the particle \( k \), where the particle \( i \) started at \( x_0^i \). In the following, we omit the dependence on \( N \) in \( X^{k, x_0^i, N} \) for simplicity.
For any \( t \in [0, T] \), \( i, j \in \{1, \ldots, N \} \), and \( x_0^j \in \mathbb{R} \), we have
\begin{align}{\label{eq: dx0 X 7}}
&\partial_{x_0^j} X^{i,x_0^j}_t = 1_{\{i=j\}} \nonumber \\
&+\int_0^t \left( \langle\theta^0,\,\partial_x b(X^{i,x_0^j}_r, \mu^N_r)\rangle \partial_{x_0^j} X^{i,x_0^j}_r + \frac{1}{N} \sum_{k=1}^N \langle\theta^0,\,(\partial_{\mu} b)(X^{i,x_0^j}_r, \mu^N_r)(X^{k, x_0^j}_r)\rangle \partial_{x_0^j} X^{k, x_0^j}_r \right) dr.
\end{align}
The formal derivation of this follows from the definition of the \( L \)-derivative in Section \ref{s: lions} and Proposition \ref{prop: deriv empirical measure}.

Comparing Equations \eqref{eq: dx0 X 7} and \eqref{eq: DsX} reveals striking similarities. These similarities suggest an approximation of the Malliavin derivative \( D_s^i X_t^{i,N} \) by the ratio $\frac{\partial_{x_0^i} X_t^{i,x_0^i}}{\partial_{x_0^i} X_s^{i,x_0^i}}\sigma ,$
for \( s < t \). We can show that, up to some technicalities involving a threshold in the denominator of this ratio to avoid potential blow-up, this is a good approximation, with an error of magnitude \( \frac{1}{N} \). However, when observing the particles, we do not have access to such a ratio, and it is therefore important to devise a practical estimator that does not rely on it. This motivates the idea of replacing the ratio with something that can be computed directly. 

As previously introduced in \eqref{eq: def estimator increments}, we now recall the definition of the estimator \(\hat{\theta}_{N, {\epsilon}}^{(2)}\): 
\begin{align*}
&\hat{\theta}_{N, {\epsilon}}^{(2)} := \Psi_N^{-1}{\sum_{i=1}^N \int_0^T b(X_t^{i,N}, \mu_t^N) \circ dX_t^{i,N}}\nonumber \\
&\quad- \Psi_N^{-1}{\sum_{i=1}^N \int_0^T \int_0^t \partial_x b(X_t^{i,N}, \mu_t^N) \frac{\frac{1}{\epsilon}(X_t^{i, x_0^i + \epsilon}- X_t^{i, x_0^i} )}{\frac{1}{\epsilon}(X_s^{i, x_0^i + \epsilon}- X_s^{i, x_0^i} ) \lor 1} \sigma ^2 \phi(t,s) ds \, dt}.
\end{align*}

It is interesting to note that, when there is no interaction, the ratio
\[
\frac{\frac{1}{\epsilon}(X_t^{i, x_0^i + \epsilon}- X_t^{i, x_0^i} )}{\frac{1}{\epsilon}(X_s^{i, x_0^i + \epsilon}- X_s^{i, x_0^i} )}
\]
approximates the ratio \( \frac{\partial_{x_0^i} X_t^{i,x_0^i}}{\partial_{x_0^i} X_s^{i,x_0^i} } \) very well, with their difference almost surely bounded by \( c \epsilon \). The impact of the interaction is not very significant, as we can show that \( \hat{\theta}_{N, {\epsilon}}^{(2)} \) approximates \( \tilde{\theta}_N \) in \( L^1 \), with an error of order \( \epsilon + \frac{1}{N} \). Therefore, choosing \( \epsilon = o(\frac{1}{\sqrt{N}}) \) ensures both the consistency and asymptotic normality of \( \hat{\theta}_{N, {\epsilon}}^{(2)} \), as formalized in Theorem \ref{th: estim increments} below, provided that the following two additional assumptions hold.

\begin{assumption}{\label{as: lb deriv drift}}
There exists a constant $M > 0$ such that, for all $(x, \mu) \in \R \times \mathcal{P}_2(\R)$, $|\partial_x b_m(x, \mu)| \ge M$ for all $m=1,\dots ,p$.
\end{assumption}

\begin{assumption}{\label{as: second derivatives}}
The second derivatives of \( b \) are uniformly bounded.
\end{assumption}

{\change 
We approximate the ratio in the denominator using the exponential term appearing in Proposition \ref{prop: approx Db}. Thanks to the lower bound in Assumption \ref{as: lb deriv drift}, this quantity is guaranteed to be bounded away from zero. Moreover, Assumption \ref{as: second derivatives} is required to control the second derivatives of \(X_s^{j, x_0^i}\), as detailed in Lemma \ref{l: x0} below.
}

\begin{theorem}{\label{th: estim increments}}
Let \( H \geq \frac{1}{2} \). Assume that Assumptions \ref{as: lip}, \ref{as: bound}, \ref{as: moments}, \ref{as: identif}, \ref{as: denominator}, \ref{as: deriv}(b), \ref{as: lb deriv drift}, and \ref{as: second derivatives} hold. Then, the estimator \( \hat{\theta}^{(2)}_{N, \varepsilon} \) satisfies the following properties:
\begin{enumerate}
    \item If \( \varepsilon = o(1) \), then \( \hat{\theta}^{(2)}_{N, \varepsilon} \) is consistent in probability:
    \[
    \hat{\theta}^{(2)}_{N, \varepsilon} \xrightarrow{\mathbb{P}} \theta^0 \quad \text{as} \quad N \to \infty.
    \]
    
    \item If \( \varepsilon = o\left( \frac{1}{\sqrt{N}} \right) \), then \( \hat{\theta}^{(2)}_{N, \varepsilon} \) is asymptotically normal:
    \[
    \sqrt{N} \left( \hat{\theta}^{(2)}_{N, \varepsilon} - \theta^0 \right) \xrightarrow{\mathcal{L}} \mathcal{N}(0, \tilde{\Sigma}^2),
    \]
    with \( \tilde{\Sigma}^2 \) as defined in \eqref{eq: def tilde vari}.
\end{enumerate} 
\end{theorem}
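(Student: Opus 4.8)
The plan is to transfer the asymptotic properties already proved for the fake-stimator $\tilde\theta_N$ — consistency from Theorem \ref{th: consistency} and asymptotic normality from Theorem \ref{th: CLT fakestimator} — to the computable estimator $\hat\theta^{(2)}_{N,\epsilon}$, by controlling their difference. The natural starting point is the representation \eqref{eq: fake with strato} of $\tilde\theta_N$ as a Stratonovich term minus the Malliavin-correction term $\Psi_N^{-1}\sum_i\int_0^T\!\int_0^T D_s^i b(X_t^{i,N},\mu_t^N)\phi(t,s)\,ds\,dt$. Since $\hat\theta^{(2)}_{N,\epsilon}$ shares exactly the same Stratonovich part and the same prefactor $\Psi_N^{-1}$ (see \eqref{eq: def estimator increments}), subtracting the two cancels the unobservable Skorohod contribution and leaves only the difference of the correction terms,
\[
\hat\theta^{(2)}_{N,\epsilon}-\tilde\theta_N = \Psi_N^{-1}\sum_{i=1}^N\int_0^T\!\!\int_0^t \Big( D_s^i b(X_t^{i,N},\mu_t^N) - \partial_x b(X_t^{i,N},\mu_t^N)\, R_{s,t}^{i,\epsilon}\Big)\phi(t,s)\,ds\,dt,
\]
where $R_{s,t}^{i,\epsilon}$ denotes the (rescaled) finite-difference ratio in \eqref{eq: def estimator increments} and I have used that $D_s^i b=0$ for $s>t$. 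For $H=\tfrac12$ the divergence integral reduces to the It\^o integral and the argument is classical; I therefore focus on $H\in(1/2,1)$, the range in which \eqref{eq: link divergence Young} and Theorem \ref{th: CLT fakestimator} apply, and in which the kernel $\phi$ is integrable on $[0,T]^2$.

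The core of the proof is a pointwise bound showing that the bracketed integrand is of order $\epsilon+\tfrac1N$ in $L^1$, uniformly in $s,t,i$. I would split the bracket into two pieces. First, Proposition \ref{prop: approx Db} replaces $D_s^i b(X_t^{i,N},\mu_t^N)$ by $\sigma\,\partial_x b(X_t^{i,N},\mu_t^N)\exp\!\big(\int_s^t\langle\theta^0,\partial_x b(X_r^{i,N},\mu_r^N)\rangle\,dr\big)$ at the cost of an almost sure error $O(1/N)$, which absorbs the interaction through the empirical-measure term. Second, I would show that $R^{i,\epsilon}_{s,t}$ approximates this exponential up to $O(\epsilon+\tfrac1N)$. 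This further splits into (a) replacing each increment $\tfrac1\epsilon(X_t^{i,x_0^i+\epsilon}-X_t^{i,x_0^i})$ by the initial-condition derivative $\partial_{x_0^i}X_t^{i,x_0^i}$, with error $O(\epsilon)$ via a Taylor expansion controlled by the bounded second derivatives of $b$ (Assumption \ref{as: second derivatives}, through Lemma \ref{l: x0}); and (b) comparing the truncated ratio $\partial_{x_0^i}X_t^{i}/(\partial_{x_0^i}X_s^{i}\lor 1)$ with the exponential. For step (b) I would exploit the structural resemblance between the variational equation \eqref{eq: dx0 X 7} and the Malliavin equation \eqref{eq: DsX}: both differ from the interaction-free exponential dynamics \eqref{eq: expression DX} only through the off-diagonal empirical-measure term, whose sensitivities $\partial_{x_0^i}X^k$ ($k\neq i$) are $O(1/N)$ exactly as the off-diagonal Malliavin derivatives in Theorem \ref{cor: PoC-deriv}, so the ratio of exact derivatives agrees with $\exp(\int_s^t\cdots)$ up to $O(1/N)$; the truncation $\lor 1$ keeps the denominator bounded below, and the lower bound $|\partial_x b_m|\ge M$ of Assumption \ref{as: lb deriv drift} is what prevents the ratio from entering the regime where the cut-off degrades the approximation.

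Integrating the resulting $O(\epsilon+\tfrac1N)$ estimate against $\phi$ (integrable since $2H-2>-1$) and summing the $N$ terms, the normalized numerator $\tfrac1N\sum_i(\cdots)$ is $O(\epsilon+\tfrac1N)$ in $L^1$; since $(\tfrac1N\Psi_N)^{-1}\xrightarrow{\mathbb{P}}\Psi^{-1}$ is $O_{\mathbb{P}}(1)$ by Proposition \ref{prop: denominator} and the invertibility of $\Psi$, I conclude
\[
\big\|\hat\theta^{(2)}_{N,\epsilon}-\tilde\theta_N\big\| = O_{\mathbb{P}}\!\Big(\epsilon + \tfrac{1}{N}\Big).
\]
The two statements follow at once. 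For the first, $\epsilon=o(1)$ forces this difference to $0$ in probability, and combining with $\tilde\theta_N\xrightarrow{\mathbb{P}}\theta^0$ (Theorem \ref{th: consistency}) gives consistency. For the second, I write
\[
\sqrt N\big(\hat\theta^{(2)}_{N,\epsilon}-\theta^0\big)=\sqrt N\big(\tilde\theta_N-\theta^0\big)+\sqrt N\big(\hat\theta^{(2)}_{N,\epsilon}-\tilde\theta_N\big),
\]
where the first term converges in law to $\mathcal N(0,\tilde\Sigma^2)$ by Theorem \ref{th: CLT fakestimator}, while the second is $O_{\mathbb{P}}(\sqrt N\,\epsilon + 1/\sqrt N)\to 0$ precisely because $\epsilon=o(1/\sqrt N)$; Slutsky's theorem then yields the claimed asymptotic normality.

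I expect the main obstacle to be step (b): proving the $O(1/N)$ rate for the approximation of $\exp(\int_s^t\cdots)$ by the threshold-regularized ratio of initial-condition derivatives, while simultaneously controlling the finite-difference error uniformly in the presence of the cut-off. Unlike the independent case \eqref{eq: expression DX}, where the ratio equals the exponential exactly, the interaction forces one to propagate the off-diagonal $1/N$ estimate through both the ratio and the Taylor remainder and to verify that the denominator does not enter its truncation regime — which is exactly where Assumptions \ref{as: lb deriv drift} and \ref{as: second derivatives} enter, and where the analysis is genuinely more delicate than in the non-interacting setting of \cite{ComMar19}.
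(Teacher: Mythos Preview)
Your proposal is correct and follows essentially the same route as the paper: it writes $\hat\theta^{(2)}_{N,\epsilon}-\tilde\theta_N$ via \eqref{eq: fake with strato}, invokes Proposition \ref{prop: approx Db} for the $O(1/N)$ replacement of $D_s^i b$ by the exponential, and then controls the remaining gap between the threshold-truncated finite-difference ratio and that exponential by a Taylor step (Lemma \ref{l: x0}, giving $O(\epsilon)$) together with the $O(1/N)$ comparison of $\partial_{x_0^i}X^{i}$ with the interaction-free exponential dynamics. The only point to make precise is the one you already flag: the truncation events must be handled in $L^1$ (via Markov's inequality and the lower bound from Assumption \ref{as: lb deriv drift}) rather than almost surely, exactly as the paper does.
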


The proof of Theorem \ref{th: estim increments} is presented in Section \ref{s: proof main}. It builds on the asymptotic properties of the fake-stimator established earlier, combined with the fact that the error introduced in transitioning from \(\tilde{\theta}_N\) to \(\hat{\theta}^{(2)}_{N, \varepsilon}\) is negligible.  \\
\\
An alternative approach to making the exponential in Proposition \ref{prop: approx Db} more accessible involves employing a fixed-point argument, leading to the introduction of the estimator \(\hat{\theta}_N^{(fp)} = F_N(\hat{\theta}_N^{(fp)})\), as defined in \eqref{eq: def fp estimator}. To ensure the effectiveness of this estimator, it is crucial to demonstrate that \(F_N\) is a contraction. This, in turn, requires a precise bound on the constants appearing in our model (see Equation \eqref{eq: condition CT} below).  

In particular, a condition involving the inverse matrix of \(\Psi_N\) arises. For \(p=1\), it is straightforward to verify that assuming \(b\) is lower bounded away from zero guarantees an upper bound on \((\Psi_N)^{-1}\). However, for \(p > 1\), establishing similar assumptions on the model to bound \((\Psi_N)^{-1}\) is significantly more challenging. To avoid cumbersome computations involving the determinant, trace, and eigenvalues of this matrix, we restrict our analysis of the fixed-point estimator to the case \(p=1\). The extension to a multidimensional parameter setting, along with the related technical challenges, is left for future work.

\begin{assumption}{\label{as: fp}}  
There exists a constant \(l > 0\) such that, for all \((x, \mu) \in \R \times \mathcal{P}_2(\R)\), \(\partial_x b (x, \mu) \leq 0\) and \(|b(x, \mu)| \geq l\).  
\end{assumption}  

Under Assumption \ref{as: fp}, it follows that \((\frac{1}{N}\Psi_N)^{-1}\leq \frac{1}{l^2 T}\). To establish that \(F_N\) is a contraction, the following condition must also hold:  
\begin{equation}{\label{eq: condition CT}}  
C_T := \frac{\left\| \partial_x b \right\|_\infty^2}{l^2} \frac{(2H - 1)}{(2H + 1)} T^{2H} \sigma < 1.  
\end{equation}  
It is worth noting that we are operating in an asymptotic regime where \(N \to \infty\), but the time horizon \(T\) is fixed. Consequently, it suffices to focus on observations within a sufficiently small time frame. Specifically, we select  
\[
T \leq T_{\text{max}} := \left(\frac{l^2}{\sigma \left\| \partial_x b \right\|_\infty^2} \frac{(2H + 1)}{(2H - 1)}\right)^{\frac{1}{2H}},  
\]  
ensuring that \eqref{eq: condition CT} is satisfied. This leads to the following theorem, which summarizes the asymptotic properties of the fixed-point estimator. The proof can be found in Section \ref{s: proof main}.  

\begin{theorem}{\label{th: fp}}
Let \( H \geq \frac{1}{2} \). Assume that Assumptions \ref{as: lip}, \ref{as: bound}, \ref{as: moments}, \ref{as: identif}, \ref{as: denominator}, \ref{as: deriv}(b),  \ref{as: fp} hold. Assume moreover that $\Theta$ is a compact space in $\R^+$  and that $T \le T_{max}$. Then, the estimator \( \hat{\theta}^{(fp)}_{N} \) satisfies the following properties:
\begin{enumerate}
    \item It is consistent in probability:
    \[
    \hat{\theta}^{(fp)}_{N} \xrightarrow{\mathbb{P}} \theta^0 \quad \text{as} \quad N \to \infty.
    \]
    
    \item It is asymptotically normal:
    \[
    \sqrt{N} \left(\hat{\theta}^{(fp)}_{N} - \theta^0 \right) \xrightarrow{\mathcal{L}} \mathcal{N}(0, \bar{\Sigma}^2),
    \]
    with \( \bar{\Sigma}^2 = ( \frac{\Sigma}{\Psi - \tilde{V}})^2 \) and
    \begin{equation}{\label{eq: Vtilde 24.5}}
    \tilde{V}:= \int_0^T \int_0^t \int_s^t \E[\partial_x b(\bar{X}_t, \bar{\mu}_t)\partial_x b(\bar{X}_r, \bar{\mu}_r) \exp(\int_s^t \theta^0 \partial_x b(\bar{X}_r, \bar{\mu}_r)  dr) ] \phi(t,s) dr \, ds \, dt.    
    \end{equation}
\end{enumerate} 
\end{theorem}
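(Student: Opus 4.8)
The plan is to prove Theorem \ref{th: fp} by reducing the behaviour of the fixed-point estimator $\hat{\theta}_N^{(fp)}$ to that of the fake-stimator $\tilde{\theta}_N$, whose asymptotic properties (consistency via Theorem \ref{th: consistency} and asymptotic normality via Theorem \ref{th: CLT fakestimator}) are already established. Since we are in the scalar case $p=1$, write $\tilde{\theta}_N = F_N(\theta^0) + R_N$, where $R_N$ collects the error coming from approximating the Malliavin derivative $D^i_s b(X_t^{i,N},\mu_t^N)$ by the exponential expression evaluated at the true parameter $\theta^0$; by Proposition \ref{prop: approx Db} this error is of order $1/N$, uniformly. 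The first step is therefore to verify that under Assumption \ref{as: fp} and the constraint $T \le T_{\max}$, the map $F_N$ is a contraction. Differentiating $F_N$ in $\theta$ produces a factor $\int_s^t \partial_x b(X_r^{i,N},\mu_r^N)\,dr$ multiplying the exponential; bounding $\|\partial_x b\|_\infty$ above, using $|b| \ge l$ to control $(\frac{1}{N}\Psi_N)^{-1} \le \frac{1}{l^2 T}$, and carrying out the $\phi(t,s)$ double integral yields exactly the Lipschitz constant $C_T$ in \eqref{eq: condition CT}, so $C_T<1$ gives a unique fixed point via Banach's theorem and guarantees $\hat{\theta}_N^{(fp)}$ is well-defined for every $N$.

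With the contraction in hand, the second step is consistency. Since $\hat{\theta}_N^{(fp)} = F_N(\hat{\theta}_N^{(fp)})$ and $\tilde{\theta}_N = F_N(\theta^0) + R_N$ with $R_N = O_{\mathbb P}(1/N)$, subtract and use the contraction estimate:
\begin{equation*}
|\hat{\theta}_N^{(fp)} - \theta^0| \le |F_N(\hat{\theta}_N^{(fp)}) - F_N(\theta^0)| + |F_N(\theta^0) - \theta^0| \le C_T |\hat{\theta}_N^{(fp)} - \theta^0| + |\tilde{\theta}_N - \theta^0| + |R_N|,
\end{equation*}
so that $(1-C_T)|\hat{\theta}_N^{(fp)} - \theta^0| \le |\tilde{\theta}_N-\theta^0| + |R_N| \xrightarrow{\mathbb P} 0$ by Theorem \ref{th: consistency} and Proposition \ref{prop: approx Db}. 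This delivers $\hat{\theta}_N^{(fp)} \xrightarrow{\mathbb P} \theta^0$ and, as a byproduct, shows $\hat{\theta}_N^{(fp)} - \tilde{\theta}_N \to 0$ at rate $1/N$ up to the contraction error.

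The third and most delicate step is asymptotic normality, where the fixed-point structure changes the variance (this is why $\bar\Sigma^2 \ne \tilde\Sigma^2$). I would linearise $F_N$ around $\theta^0$: writing $\hat{\theta}_N^{(fp)} - \theta^0$ and expanding $F_N(\hat{\theta}_N^{(fp)}) - F_N(\theta^0)$ to first order produces a self-referential term $F_N'(\theta^0)(\hat{\theta}_N^{(fp)} - \theta^0)$ whose limit, by Proposition \ref{prop: denominator} applied to the relevant locally Lipschitz functionals, converges to $\tilde V / \Psi$ with $\tilde V$ as in \eqref{eq: Vtilde 24.5}. Collecting terms gives
\begin{equation*}
\sqrt{N}(\hat{\theta}_N^{(fp)} - \theta^0)\Big(1 - \tfrac{\tilde V}{\Psi} + o_{\mathbb P}(1)\Big) = \sqrt{N}(\tilde{\theta}_N - \theta^0) + o_{\mathbb P}(1),
\end{equation*}
and since the right-hand side converges to $\mathcal N(0,\Sigma^2/\Psi^2)$ by Theorem \ref{th: CLT fakestimator}, Slutsky's theorem yields the stated limit $\mathcal N(0, \bar\Sigma^2)$ with $\bar\Sigma^2 = (\Sigma/(\Psi-\tilde V))^2$. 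The main obstacle is the rigorous linearisation of the exponential term inside $F_N$: one must control the remainder in the Taylor expansion of $\exp(\int_s^t \hat{\theta}_N^{(fp)}\partial_x b\,dr)$ around $\theta^0$ uniformly in $N$ and show that the second-order contribution is $o_{\mathbb P}(1/\sqrt N)$, which requires combining the $1/N$-consistency just obtained with the boundedness of $\partial_x b$ and an application of Proposition \ref{prop: denominator} to identify the probabilistic limit of $F_N'(\theta^0)$ as exactly $\tilde V/\Psi$.
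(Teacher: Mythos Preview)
Your proposal is essentially correct and follows the same architecture as the paper's proof: write $\tilde\theta_N = F_N(\theta^0)+R_N$ with $R_N=O(1/N)$ via Proposition~\ref{prop: approx Db}, establish the contraction with constant $C_T$, deduce consistency from the contraction inequality, and then linearise $F_N$ around $\theta^0$ so that the first-order coefficient $F_N'(\theta^0)$ (the paper calls it $V_N$) converges via Proposition~\ref{prop: denominator} to $\tilde V/\Psi$, yielding the modified variance $(\Sigma/(\Psi-\tilde V))^2$ through Slutsky.

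One slip to correct: you write ``$1/N$-consistency'' for $\hat\theta_N^{(fp)}-\theta^0$, but the contraction inequality $(1-C_T)|\hat\theta_N^{(fp)}-\theta^0|\le |\tilde\theta_N-\theta^0|+|R_N|$ only delivers the rate $O_{\mathbb P}(1/\sqrt N)$, since $\tilde\theta_N-\theta^0=\Psi_N^{-1}Z^N$ is itself $O_{\mathbb P}(1/\sqrt N)$ by Theorem~\ref{th: CLT fakestimator}. This is nonetheless exactly what you need: the second-order Taylor remainder is then $O_{\mathbb P}\big((\hat\theta_N^{(fp)}-\theta^0)^2\big)=O_{\mathbb P}(1/N)=o_{\mathbb P}(1/\sqrt N)$, so the linearisation goes through. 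The paper carries this step out in $L^1$ rather than in probability (using the deterministic bound $(\tfrac{1}{N}\Psi_N)^{-1}\le (l^2T)^{-1}$ from Assumption~\ref{as: fp} together with $\mathbb E[|Z^N|^2]\le cN$), but your $O_{\mathbb P}$ route is equally valid.
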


\begin{remark}
Observe that Theorem \ref{th: fp} assures us that consistency and asymptotic Gaussianity can still be achieved for the fixed-point estimator, even though an approximation error is introduced. However, the cost of this error becomes evident in the second point of the theorem, which concerns the asymptotic Gaussianity of the estimator, specifically in its variance.  

Indeed, under our assumptions (\(\partial_x b \leq 0\)), it is straightforward to verify that \(\tilde{V}\) is positive. This implies that \({\Psi} - \tilde{V} \leq \Psi\), leading to the variance \(\bar{\Sigma}^2\) being larger than that of the estim-actor, \(\tilde{\Sigma}^2\), as established in Theorem \ref{th: CLT fakestimator}.  
\end{remark}

Although the fixed-point estimator performs well, as established in the main results above, it is often more practical to replace it with an iterative estimator that converges to the fixed-point solution. Specifically, we define the iterative estimator as follows:  
\[
\hat{\theta}^{(it)}_{N, n} := F_N (\hat{\theta}^{(it)}_{N, n-1}).  
\]  
The conditions ensuring that \(F_N\) is a contraction also guarantee the convergence of the iterative estimator to the fixed-point solution as \(n \to \infty\). Moreover, the favorable properties of the iterative estimator are summarized in the following corollary.

\begin{corollary}{\label{cor: it}}
Let \( H \geq \frac{1}{2} \). Assume that Assumptions \ref{as: lip}, \ref{as: bound}, \ref{as: moments}, \ref{as: identif}, \ref{as: denominator}, \ref{as: deriv}(b),  \ref{as: fp} hold. Assume moreover that $\Theta$ is a compact space in $\R^+$  and that $T \le T_{max}$. Then, the estimator \( \hat{\theta}^{(it)}_{N, n} \) satisfies the following properties:
\begin{enumerate}
    \item It is consistent in probability:
    \[
    \hat{\theta}^{(it)}_{N, n}  \xrightarrow{\mathbb{P}} \theta^0 \quad \text{as} \quad N, n \to \infty.
    \]
    
    \item Assume moreover that $\sqrt{N}(C_T)^n \rightarrow 0 $ for $N, n \to \infty$. Then, $\hat{\theta}^{(it)}_{N, n}$ is asymptotically normal:
    \[
    \sqrt{N} \left(\hat{\theta}^{(it)}_{N,n} - \theta^0 \right) \xrightarrow{\mathcal{L}} \mathcal{N}(0, \bar{\Sigma}^2) \quad \text{as} \quad N, n \to \infty,
    \]
    with \( \bar{\Sigma}^2\) as in Theorem \ref{th: fp}.
\end{enumerate}   
\end{corollary}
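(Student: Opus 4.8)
The plan is to treat the iterative scheme as a Banach fixed-point iteration and to reduce everything to the properties of $\hat{\theta}_N^{(fp)}$ already established in Theorem \ref{th: fp}. The crucial observation is that, under the stated hypotheses (in particular $T \le T_{max}$), the map $F_N$ is a contraction whose Lipschitz constant is exactly the \emph{deterministic} quantity $C_T < 1$ from \eqref{eq: condition CT}, which does not depend on $N$. Consequently, all the randomness and the $N$-dependence will be funneled through $\hat{\theta}_N^{(fp)}$, while the gap between the $n$-th iterate and the fixed point will be controlled purely deterministically.

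First I would record the contraction estimate. Since $\hat{\theta}_N^{(fp)} = F_N(\hat{\theta}_N^{(fp)})$ and $\hat{\theta}^{(it)}_{N,n} = F_N(\hat{\theta}^{(it)}_{N,n-1})$, the contraction property gives
\begin{equation*}
|\hat{\theta}^{(it)}_{N,n} - \hat{\theta}_N^{(fp)}| = |F_N(\hat{\theta}^{(it)}_{N,n-1}) - F_N(\hat{\theta}_N^{(fp)})| \le C_T\, |\hat{\theta}^{(it)}_{N,n-1} - \hat{\theta}_N^{(fp)}|,
\end{equation*}
and iterating yields $|\hat{\theta}^{(it)}_{N,n} - \hat{\theta}_N^{(fp)}| \le (C_T)^n\, |\hat{\theta}^{(it)}_{N,0} - \hat{\theta}_N^{(fp)}|$. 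Initializing the iteration inside the compact set $\Theta$ and using that the fixed point also lies in $\Theta$ (which is precisely what makes the Banach argument applicable), the initial gap is bounded by a deterministic constant $c$, namely the diameter of $\Theta$. Thus
\begin{equation*}
|\hat{\theta}^{(it)}_{N,n} - \hat{\theta}_N^{(fp)}| \le c\,(C_T)^n,
\end{equation*}
a bound that is uniform in $N$ and decays to $0$ as $n \to \infty$ because $C_T < 1$.

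For the consistency statement I would split $\hat{\theta}^{(it)}_{N,n} - \theta^0 = (\hat{\theta}^{(it)}_{N,n} - \hat{\theta}_N^{(fp)}) + (\hat{\theta}_N^{(fp)} - \theta^0)$. The first term is bounded by $c(C_T)^n$, hence tends to $0$ deterministically as $n \to \infty$, while the second tends to $0$ in probability as $N \to \infty$ by Theorem \ref{th: fp}(1). Combining the two along the joint limit $N, n \to \infty$ gives $\hat{\theta}^{(it)}_{N,n} \xrightarrow{\mathbb{P}} \theta^0$.

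For asymptotic normality I would multiply the same decomposition by $\sqrt N$. The fixed-point term $\sqrt N(\hat{\theta}_N^{(fp)} - \theta^0)$ converges in law to $\mathcal{N}(0, \bar{\Sigma}^2)$ by Theorem \ref{th: fp}(2), while the remainder satisfies $|\sqrt N(\hat{\theta}^{(it)}_{N,n} - \hat{\theta}_N^{(fp)})| \le c\,\sqrt N\,(C_T)^n$, which tends to $0$ precisely under the additional hypothesis $\sqrt N (C_T)^n \to 0$. An application of Slutsky's theorem then transfers the limiting law to $\sqrt N(\hat{\theta}^{(it)}_{N,n} - \theta^0)$, yielding the claimed $\mathcal{N}(0, \bar{\Sigma}^2)$. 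The only genuinely delicate point is ensuring that the initial gap $|\hat{\theta}^{(it)}_{N,0} - \hat{\theta}_N^{(fp)}|$ is bounded uniformly (or at least in probability) in $N$, so that the scalar rate condition $\sqrt N(C_T)^n \to 0$ suffices to kill the iteration error without interference from $N$-dependent fluctuations. This boundedness rests on the fixed point remaining in the compact $\Theta$ for every $N$, which is already guaranteed by the contraction hypothesis $C_T < 1$ together with $F_N(\Theta) \subseteq \Theta$ underlying Theorem \ref{th: fp}.
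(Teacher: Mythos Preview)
Your proposal is correct and follows essentially the same route as the paper: decompose $\hat{\theta}^{(it)}_{N,n}-\theta^0$ through $\hat{\theta}_N^{(fp)}$, use the contraction of $F_N$ with deterministic constant $C_T$ to get the geometric bound $|\hat{\theta}^{(it)}_{N,n}-\hat{\theta}_N^{(fp)}|\le (C_T)^n|\hat{\theta}^{(it)}_{N,0}-\hat{\theta}_N^{(fp)}|$, and then invoke Theorem~\ref{th: fp} and Slutsky. You are actually a bit more careful than the paper in explicitly justifying why the initial gap $|\hat{\theta}^{(it)}_{N,0}-\hat{\theta}_N^{(fp)}|$ is bounded uniformly in $N$ (via the diameter of the compact $\Theta$), a point the paper leaves implicit.
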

\begin{remark}
Note that since \(C_T < 1\), the assumption \(\sqrt{N}(C_T)^n \to 0\) is not restrictive. In fact, we can even allow \(n\) to grow logarithmically with \(N\). Specifically, if \(n = a \log N\) for some \(a > 0\), the constraint simplifies to \(C_T < 2a\). Consequently, a new time horizon \(\tilde{T}_{\text{max}}\) can be chosen to ensure that this condition is satisfied.  
\end{remark}

Before moving forward, let us highlight that in the next section, we will take a practical step further by considering a discretized version of the estimators proposed here, in the high-frequency setting. This is feasible because we are working with Stratonovich integrals, allowing us to apply the Riemann sum approximation to all quantities appearing in the estimators we have defined. Although it seems clear that the discretization error does not play a significant role, one should study in detail the implications of having only discrete observations. To maintain the article within bounds we have decided to restrict our analysis to the continuous case, leaving the case of discrete observations for future investigation. 

We note that when only discrete observations \( (X_{t_j}^{i,N})_{i \in \{1, \dots, N \}}^{ j \in \{1, \dots, n\}} \) are available, with \( 0 = t_0 < t_1 < \dots < t_n = T \) as the discretization grid and step size \( \Delta_n := \frac{T}{n} \), a natural estimator of the true parameter \( \theta^0 \) can be obtained considering

\begin{equation}{\label{eq: QnN}}
 Q_n^N (\theta) := \sum_{j=0}^{n-1} \sum_{i=1}^N \left[ \left( X_{t_{j+1}}^{i,N} - X_{t_j}^{i,N} - \Delta_n \tilde{b}(\theta, X_{t_j}^{i,N}, \mu_{t_j}^N) \right)^2 - \Delta_n^{2H} \right],   
\end{equation}
where $\tilde{b}$ is the drift function depending on $\theta$.

This quadratic statistic is inspired by the one studied in \cite{NeuTin} for drift parameter estimation with additive noise. Readers familiar with parameter estimation in the classical Brownian motion setting will recognize its similarity to the classical contrast function (e.g., \cite{Amo23}, \cite{Imp} or  \cite{Yoshida}). However, in our case, an additional correction of order \( \Delta_n^{2H} \) is required to remove the contribution of the second moments of the fractional Brownian motion.

From this contrast function \( Q_n^N(\theta) \), we can define the estimator for \( \theta^0 \) as:
\begin{equation}\label{eq: est discrete}
    \theta_n^N := \arg \min_{\theta \in \Theta} Q_n^N(\theta).
\end{equation}

Although we use this estimator in our simulations and demonstrate that it performs well numerically, its theoretical analysis remains an open question. It is important to note that our analysis heavily relies on the fact that the drift function is linear in the parameter vector \( \theta \), a condition not required when considering the contrast function \( Q_n^N(\theta) \). We believe this is another reason why the estimator \( \theta_n^N \) is both interesting and warrants further investigation.

\section{Numerical results}{\label{s: numerical}}

In this section we illustrate the performance of the estimators by evaluating them from simulated sample paths. Note that in practice, we need to consider discretised versions of the estimators, obtained by replacing the integrals by the corresponding Riemann Stieltjes type sums.\\

For all simulations, we utilise a Monte Carlo procedure with $100$ iterations to approximate the sample bias and RMSE. To simulate the trajectories on the interval $[0,1]$, we generate independent fractional Brownian motions, and then use the Euler scheme.\\

The first model we consider is a simple linear model, namely
\begin{equation}\label{eq: lin}
   X_t^{i,N} = \int_0^t \theta^0 \left(X^{i,N}_s-\frac{1}{N}\sum_{j=1}^N X^{j,N}_s\right)ds +B^{H,i}_t. 
\end{equation}
For this model, we investigate the behaviour of the ratio estimator for $\theta^0=5$ and $\theta^0=12$. To approximately compute the value of the ratio estimator considered in Theorem \ref{th: estim increments}, we run the Euler scheme and discretize the pathwise integrals with a mesh size $0.001$ and pick $\epsilon = 0.15$.\\

We also study the discrete estimator given by formula \eqref{eq: est discrete} in order to motivate future work on its asymptotic properties, and present a first empirical comparison of these estimators. To ensure that the setups are comparable in terms of observations, we set $n=1000$. For the discrete estimator, another important parameter is the mesh size for the points on which the function $Q^N_n$ is evaluated. Here, we pick this quantity as $0.05$. Also, since the minimum is determined on a discrete grid, it makes sense to speak about the exact recovery of the parameter if it happens to be on the grid. In a theoretical study, an approach for error quantification could be to increase the grid size with growing $N$.\\

\begin{table}[h!]
\centering
\begin{tabular}{|c|c|c|c|c|}
\hline
& $H = 0.6$ & $H = 0.6$ & $H = 0.8$ & $H = 0.8$ \\
\hline
&$N = 30$ & $N = 60$ & $N = 30$ & $N = 60$ \\
\hline
Ratio, $\theta^0=5$ & $0.099 \,(-0.008)$ & $0.018 \,( -0.015)$ & $0.129\,(-0.006)$ & $0.078 \,(0.005)$ \\
Discrete, $\theta^0=5$ & $0.062\,( -0.001)$ & $0.059\,(0.004)$ & $0.042\,(0.036)$ & $0.040\,(0.033)$ \\
\hline
Ratio, $\theta^0=12$ & $5 \cdot 10^{-5} \,( -6 \cdot 10^{-6})$ & $4 \cdot 10^{-5}\,(6 \cdot 10^{-6})$ & $6 \cdot 10^{-4}\,(9 \cdot 10^{-5})$ & $1 \cdot 10^{-4}\,(3 \cdot 10^{-5})$ \\
Discrete, $\theta^0=12$ & $0\,(0)$ & $0\,(0)$ & $0\,(0)$ & $0\,(0)$ \\
\hline
\end{tabular}\\
\caption{Simulation results for the model \eqref{eq: lin} in the format RMSE (Bias).}
\label{table:1}
\end{table}

As mentioned before, the discrete estimator can recover the correct value exactly. However, its performance, and also the speed of computation, by definition depends heavily on prior knowledge of bounds on the space of parameters $\Theta$.

The next model is defined as follows:
\begin{equation}\label{eq: arctan}
  X_t^{i,N} = \int_0^t \theta^0 \left(2- \arctan \left(X^{i,N}_s-\frac{1}{N}\sum_{j=1}^N X^{j,N}_s\right )\right )ds +B^{H,i}_t.   
\end{equation}

With this setup, the individual particles are pushed into linear growth with fluctuations that get stronger as the particle moves further away from the mean. For this drift, the fixed point estimator satisfies the conditions of Theorem \ref{th: fp}, and we can illustrate its behaviour in a simulation and compare to the ratio estimator. Here, we choose to run the Euler scheme and discretise the integrals with the step size $0.005$. To approximate the fixed point values, we pick $n=\lfloor \log (N)\rfloor$, however, in simulations we can see that the estimator is already close to the true value after the first iteration and improves only slightly after that.\\

\begin{table}[h!]
\centering
\begin{tabular}{|c|c|c|c|c|}
\hline
& $H = 0.6$ & $H = 0.6$ & $H = 0.8$ & $H = 0.8$ \\
\hline
&$N = 30$ & $N = 60$ & $N = 30$ & $N = 60$ \\
\hline
Ratio, $\theta^0=5$ & $ 1.910 \,(-0.017)$ & $ 0.919\,( -0.083)$ & $2.607\,(0.22)$ & $ 0.646\,(0.024)$ \\
Fixed point, $\theta^0=5$ & $ 0.091\,(-0.002)$ & $ 0.061\,( -0.007)$ & $0.09\,(0.002)$ & $ 0.063\,(0.006)$ \\
Discrete, $\theta^0=5$ & $ 0.339 \,(-0.067)$ & $ 0.247\,(0.004)$ & $0.106\,(-0.045)$ & $ 0.078\,(-0.05)$ \\
\hline
Ratio, $\theta^0=12$ & $ 0.623\,(-0.044)$ & $ 0.23\,(0.005)$ & $0.302\,(0.082)$ & $ 0.262\,(0.07)$ \\
Fixed point, $\theta^0=12$ & $ 0.093\,(-0.001)$ & $0.066 \,(-0.001 )$ & $0.085\,(-0.001)$ & $ 0.065\,(-0.003)$ \\
Discrete, $\theta^0=12$ & $ 0.284\,(-0.036)$ & $ 0.208\,( -0.059)$ & $0.11\,(-0.034)$ & $ 0.073\,(-0.024)$ \\
\hline
\end{tabular}\\
\caption{Simulation results for the model \eqref{eq: arctan} in the format RMSE (Bias). For $H=0.8$ the interval on which the fixed point estimator is considered is chosen as $[0, 0.79]$ in order to comply with the condition on $T_{max}$.}
\label{table:2}
\end{table}

The ratio estimator is significantly faster than the fixed-point estimator, as it requires one fewer integral approximation. However, in our simulations, the fixed-point estimator provides more precise results: in particular, the RMSE improves by a factor of 10 compared to the ratio estimator. The relatively poor performance of the ratio estimator can also be attributed to the fact that the drift in \eqref{eq: arctan} does not satisfy Assumption \ref{as: lb deriv drift}. However, the performance still improves with growing N, hinting at a possible relaxation of the hypotheses. Additionally, the fixed-point estimator outperforms the discrete estimator in terms of variance, even though the true values lie precisely on the grid over which $Q_n^N$ is minimised.\\

Finally, we illustrate the performance of the ratio estimator in the two-dimensional case given by the equation
\begin{equation}\label{eq: 2d param}
    X_t^{i,N} = \int_0^t \theta^0_1 \left(X^{i,N}_s-\frac{1}{N}\sum_{j=1}^N X^{j,N}_s\right) + \theta^0_2 X^{i,N}_s ds +B^{H,i}_t
\end{equation}
with the true parameters $\theta^0_1 = 2,\, \theta^0_2=11$. Also in this case, we can define the discrete estimator by minimising $Q_n^N$ with respect to both parameters. Like in the second model, we choose $0.005$ as discretisation step. The performance of both estimators is summarised in Table \ref{table:3}.\\

\begin{table}[h!]
\centering
\begin{tabular}{|c|c|c|c|c|}
\hline
& $H = 0.6$ & $H = 0.6$ & $H = 0.8$ & $H = 0.8$ \\
\hline
&$N = 30$ & $N = 60$ & $N = 30$ & $N = 60$ \\
\hline
Ratio, $\theta_1$ & $0.006\,( \sim 10^{-4})$ & $0.002\,(\sim 10^{-4})$ & $0.023\,(0.002)$ & $0.003\,(2\cdot 10^{-4})$ \\
Discrete, $\theta_1$ & $0.01\,(0.001)$ & $0.005\,(0.001)$ & $0.01\,(-0.001)$ & $0\,(0)$ \\
\hline
Ratio, $\theta_2$ & $0.006\,( \sim 10^{-4})$ & $0.002\,(\sim 10^{-4})$ & $0.023\,(-0.002)$ & $0.003\,(1\cdot 10^{-4})$ \\
Discrete, $\theta_2$ & $0.01\,(-0.001)$ & $0.005\,(-0.001)$ & $0.01\,(0.001)$ & $0\,(0)$ \\
\hline
\end{tabular}
\caption{Simulation results for the model \eqref{eq: 2d param} in the format RMSE (Bias).}
\label{table:3}
\end{table}

{\change
\section{Conclusion and future perspectives}
This section reflects on the insights gained throughout the paper and outlines directions for future research. As previously discussed, we studied the estimation of a drift parameter in an interacting particle system driven by fractional Brownian motion. Our main contributions include proving the consistency and asymptotic normality of the so-called fake-stimator, as well as that of computable estimators.

The complexity of the problem arises not only from the involvement of fBm, which is well known to pose analytical challenges, although widely studied in the literature, but also from the novelty of our statistical framework. In particular, establishing asymptotic normality in this context is rare. In the existing literature, such results are typically achieved for specific models like the Ornstein–Uhlenbeck process, where the estimator lies in the second Wiener chaos, allowing for the application of CLTs for multiple stochastic integrals.

In our case, the CLT for the fake-stimator is instead based on a new propagation of chaos result for Malliavin derivatives, which we establish here. Extending this result to the computable estimators is especially challenging due to particle interactions. Standard propagation of chaos is insufficient to show that these interactions can be neglected when approximating Malliavin derivatives. This motivated us to derive sharper estimates, see Proposition \ref{prop: approx Db}, which we believe have independent interest.

That said, as this is the first study of its kind, our main results come with some limitations, opening several directions for future investigation.

First, as already mentioned, it is crucial for applications to base inference on discrete-time, high-frequency observations. A natural way forward is to analyze the contrast function \( Q_{n}^N \), as introduced in \eqref{eq: QnN}. This would not only address the discrete observation setting but also allow for a generalization beyond linearity in the parameter, overcoming another limitation of our current framework.

Second, the efficiency of the proposed estimators remains an open question. While the fixed-point estimator exhibits higher variance, it is unclear whether the variances of the fake-stimator or the ratio-based estimator are optimal. The recent work \cite{Hof2} proves the LAN property for drift estimation in McKean–Vlasov equations with standard Brownian noise under continuous observation. The Fisher information matrix they obtain mirrors the classical case, with expectations taken with respect to the time-evolving law \(\bar{\mu}_t\) instead of a stationary distribution. Exploring whether a similar structure emerges in our setting—and whether the results of \cite{LAN Eul}, which prove the LAN property for SDEs with additive fractional noise, can be extended to our regime—is a compelling direction. Although their approach relies heavily on ergodicity and a Girsanov-type transformation, we believe that the tools developed here for analyzing Malliavin derivatives in the interacting setting could help overcome the challenges of our asymptotic framework.

Third, we aim to explore nonparametric approaches to drift estimation, offering greater modeling flexibility when the drift's functional form is unknown. We plan to build on the projection-based methodology proposed in \cite{CM}, developed for independent particles. However, extending it to the interacting case introduces new complexities, even in the limiting regime, especially in choosing appropriate functional spaces for \( b_m(x, \mu) \) and managing the measure dependence.

Lastly, all our results are restricted to the additive noise setting. Extending the analysis to the multiplicative noise case poses significant challenges, and currently remains an open problem. \\
\\
In conclusion, to the best of our knowledge, this work represents the first attempt at statistical inference for interacting particle systems driven by fBm. We hope it provides a useful starting point for statisticians interested in the rich and challenging structure of such models and that it offers insights and tools to inspire further research in this emerging area.
}

\section{Preliminaries}{\label{s: preliminaries}}

In this section, we present some preliminary concepts that will be useful throughout the paper. We begin with an introduction to fractional Brownian motion and Malliavin calculus for this process, with Section 5 of \cite{Nua} serving as the primary reference. Following this, we provide background on differentiating a function of a probability measure, focusing in particular on the \(L\)-derivative; the main reference here is Section 5.2 of \cite{CarDel18}. To conclude, we outline several technical results, whose proofs can be found in Section \ref{s: proof technical}, that are essential for establishing our main findings.

\subsection{Malliavin calculus for fractional Brownian motion}{\label{s: malliavin}}

Let us recall that the $N$-dimensional fractional Brownian motion (fBm), denoted by \( B = \{ (B_t^{1,H}, \dots, B_t^{N,H}), t \geq 0 \} \), with Hurst parameter \( H \in (0,1) \), is a zero-mean Gaussian process. Its components are independent and share the covariance function

\[
\mathbb{E}(B_t^{i,H} B_s^{i,H}) = R_H(t, s) := \frac{1}{2} \left(t^{2H} + s^{2H} - |t - s|^{2H}\right),
\]
for \( i = 1, \dots, N \), and for all \( s, t \geq 0 \). The probability space \( (\Omega, \mathcal{F}, \mathbb{P}) \) considered here is the canonical space of fBm. Specifically, \( \Omega = C_0(\mathbb{R}_+ ; \mathbb{R}^N) \) is the set of continuous functions from \( \mathbb{R}_+ \) to \( \mathbb{R}^N \), equipped with the uniform topology on compact intervals. \( \mathcal{F} \) is the Borel \( \sigma \)-algebra, and \( \mathbb{P} \) is the probability measure on \( (\Omega, \mathcal{F}) \), such that the process \( B_t^H(\omega) \) is an fBm with Hurst parameter \( H \in (0,1) \).


Next, we recall some background material on Malliavin calculus for the fBm \( B \). Let \( \mathcal{E}^N \) denote the set of \( \mathbb{R}^N \)-valued step functions on \( [0, T] \) with compact support. The Hilbert space \( \mathcal{H}^N \) is defined as the closure of \( \mathcal{E}^N \), endowed with the inner product

\[
\left\langle \left( \mathbf{1}_{[0, s_1]}, \dots, \mathbf{1}_{[0, s_N]} \right), \left( \mathbf{1}_{[0, t_1]}, \dots, \mathbf{1}_{[0, t_N]} \right) \right\rangle_{\mathcal{H}^N} = \mathbb{E} \left[ \left( \sum_{j=1}^N B_{s_j}^{j,H} \right) \left( \sum_{j=1}^N B_{t_j}^{j,H} \right) \right]
= \sum_{i=1}^N R_H(s_i, t_i),
\]
for every \( s_i, t_i \geq 0 \). The mapping \( \left( \mathbf{1}_{[0, t_1]}, \dots, \mathbf{1}_{[0, t_N]} \right) \mapsto \sum_{j=1}^N B_{t_j}^{j,H} \) can be extended to a linear isometry between \( \mathcal{H}^N \) and the Gaussian space spanned by \( B \). We denote this isometry by \( \varphi \in \mathcal{H}^N \mapsto B(\varphi) \). For \( N = 1 \), we simply write \( \mathcal{E} = \mathcal{E}^1 \) and \( \mathcal{H} = \mathcal{H}^1 \) (see \cite{Nua} for details).

When \( H = \frac{1}{2} \), \( B \) is just an \( N \)-dimensional Brownian motion, and \( \mathcal{H}^N = L^2([0, T]; \mathbb{R}^N) \). When \( H \in \left( \frac{1}{2}, 1 \right) \), let \( |\mathcal{H}|^N \) be the linear space of \( \mathbb{R}^N \)-valued measurable functions \( \varphi \) on \( [0, T] \) such that
\[
\|\varphi\|_{|\mathcal{H}|^N}^2 = \alpha_H \sum_{j=1}^N \int_{[0, T]^2} | \varphi_r^j| |\varphi_s^j| |r - s|^{2H - 2} \, \mathrm{d}r \, \mathrm{d}s < \infty,
\]
where \( \alpha_H = H(2H - 1) \). Then \( |\mathcal{H}|^N \) is a Banach space with the norm \( \|\cdot\|_{\mathcal{H}^N} \), and \( \mathcal{E}^N \) is dense in \( |\mathcal{H}|^N \). Furthermore, for any \( \varphi \in L^{1/H}([0, T]; \mathbb{R}^N) \), we have
\[
\|\varphi\|_{|\mathcal{H}|^N} \leq b_{H,d} \|\varphi\|_{L^{1/H}([0, T]; \mathbb{R}^N)},
\]
for some constant \( b_{H,d} > 0 \) (again, see \cite{Nua} for details). Thus, we have continuous embeddings \( L^{1/H}([0, T]; \mathbb{R}^N) \subset |\mathcal{H}|^N \subset \mathcal{H}^N \) for \( H > \frac{1}{2} \).

Next, we introduce the derivative operator and its adjoint, the divergence operator. Consider a smooth and cylindrical random variable of the form \( F = f\left(B^H_{t_1}, \ldots, B^H_{t_n}\right) \), where \( f \in C_b^{\infty}\left(\mathbb{R}^{N \times n}\right) \), meaning \( f \) and its partial derivatives are all bounded. We define its Malliavin derivative as the \( \mathcal{H}^N \)-valued random variable given by \( D F = \left( D^1 F, \ldots, D^N F \right) \), whose \( j \)-th component is

\[
D_s^j F = \sum_{i=1}^n \frac{\partial f}{\partial x_i^j}\left(B^H_{t_1}, \ldots, B^H_{t_n}\right) 1_{\left[0, t_i\right]}(s).
\]

By iteration, one can define higher-order derivatives \( D^{j_1, \ldots, j_i} F \) that take values in \( (\mathcal{H}^N)^{\otimes i} \). For any natural number \( p \) and any real number \( q \geq 1 \), we define the Sobolev space \( \mathbb{D}^{p,q} \) as the closure of the space of smooth and cylindrical random variables with respect to the norm \( \|\cdot\|_{p,q} \), given by

\[
\|F\|_{p,q}^q = \mathbb{E}[|F|^q] + \sum_{i=1}^p \mathbb{E} \left[ \left( \sum_{j_1, \ldots, j_i = 1}^N \|D^{j_1, \ldots, j_i} F\|^2_{(\mathcal{H}^N)^{\otimes i}} \right)^{\frac{q}{2}} \right].
\]

Similarly, if \( \mathbb{W} \) is a general Hilbert space, we can define the Sobolev space of \( \mathbb{W} \)-valued random variables, denoted \( \mathbb{D}^{p,q}(\mathbb{W}) \).

For \( j = 1, \ldots, N \), the adjoint of the Malliavin derivative operator \( D^j \), denoted by \( \delta^j \), is called the divergence operator or Skorohod integral. A random element \( u \) belongs to the domain of \( \delta^j \), denoted \( \operatorname{Dom}\left(\delta^j\right) \), if there exists a positive constant \( c_u \) depending only on \( u \) such that

\[
\mathbb{E}\left( \left\langle D^j F, u \right\rangle_{\mathcal{H}} \right) \leq c_u \|F\|_{L^2(\Omega)}
\]
for any \( F \in \mathbb{D}^{1,2} \). If \( u \in \operatorname{Dom}\left(\delta^j\right) \), the random variable \( \delta^j(u) \) is defined by the duality relationship

\[
\mathbb{E}\left( F \delta^j(u) \right) = \mathbb{E}\left( \left\langle D^j F, u \right\rangle_{\mathcal{H}} \right),
\]
for any \( F \in \mathbb{D}^{1,2} \). In a similar way, we define the divergence operator on \( \mathcal{H}^N \), where {\( \delta(u) = \sum_{j=1}^N \delta^j(u_j) \)}, with \( u_j \in \operatorname{Dom}\left(\delta^j\right) \) for all \( j = 1, \ldots, N \). We also use the notation {\( \delta(u) = \int_0^T u_t \, dB^H_t \)}, referring to {\( \delta(u) \)} as the divergence integral of \( u \) with respect to the fBm \( B^H \).

For \( p > 1 \), as a consequence of Meyer's inequality, the divergence operator \( \delta \) is continuous from \( \mathbb{D}^{1,p}(\mathcal{H}^N) \) into \( L^p(\Omega) \). This means
\begin{equation}{\label{eq: Meyer}}
\mathbb{E}\left[|\delta(u)|^p\right] \leq C_p \left( \mathbb{E}\left[\|u_\cdot\|_{\mathcal{H}^N}^p\right] + \mathbb{E}\left[\|D_\cdot u_\cdot\|_{\mathcal{H}^N \otimes \mathcal{H}^N}^p\right] \right),   
\end{equation}
for some constant \( C_p \) depending on \( p \).

\subsection{Differentiability of functions of probability measures}{\label{s: lions}}
We study a stochastic differential equation dependent on a measure, and to derive our result, we focus on the differentiability of the associated stochastic flow. This requires a concept of differentiation for functions defined on spaces of probability measures. The notion of differentiability we employ is one introduced by P.-L. Lions in his lectures at the Collège de France, as compiled by Cardaliaguet in his notes \cite{Car10}. The key ideas are thoroughly explained in \cite{CarDel18}, which serves as our primary reference. We specifically refer to Section 5.2 of \cite{CarDel18} for the preliminaries relevant to our discussion.

There are several notions of differentiability for functions defined on spaces of probability measures, and recent advances in optimal transport theory have highlighted some of their geometric characteristics (see Section 5.4 of \cite{CarDel18} for a review). However, our approach is more functional-analytic in nature rather than geometric, focusing on controlling infinitesimal perturbations of probability measures induced by small variations in a linear space of random variables. Consequently, our differentiation framework relies on the \textit{lifting} of functions $\mathcal{P}_2(\R) \ni \mu \mapsto u(\mu)$ to functions $\tilde{u}$ defined on a Hilbert space $L^2(\Omega, \mathcal{F}, \mathbb{P}; \R)$ over some probability space $(\Omega, \mathcal{F}, \mathbb{P})$, by setting $\tilde{u}(X) = u(\mathcal{L}(X))$, for $X \in L^2(\Omega, \mathcal{F}, \mathbb{P}; \R)$. Here, $\Omega$ is a Polish space, $\mathcal{F}$ its Borel $\sigma$-field, and $\mathbb{P}$ an atomless probability measure (and since $\Omega$ is Polish, $\mathbb{P}$ is atomless if and only if every singleton has measure zero).

In our analysis of the differentiability of probability measures, we frequently use the fact that over an atomless probability space $(\Omega, \mathcal{F}, \mathbb{P})$, for any probability distribution $\mu$ on a Polish space $E$, one can construct an $E$-valued random variable on $\Omega$ with $\mu$ as its distribution. For more details on the properties of the lifting $\tilde{u}$ over general spaces that are neither Polish nor atomless, we refer to Remark 5.26 in \cite{CarDel18}.

\begin{definition}{\label{def: L-diff}}
A function $u$ on $\mathcal{P}_2(\R)$ is said to be L-differentiable at $\mu_0 \in \mathcal{P}_2(\R)$ if there exists a random variable $X_0$ with law $\mu_0$ (i.e., $\mathcal{L}(X_0) = \mu_0$) such that the lifted function $\tilde{u}$ is Fréchet differentiable at $X_0$.
\end{definition}

The Fréchet derivative of $\tilde{u}$ at $X_0$ can be viewed as an element of $L^2(\Omega, \mathcal{F}, \mathbb{P}; \R)$ by identifying $L^2(\Omega, \mathcal{F}, \mathbb{P}; \R)$ with its dual. When studying this form of differentiation, the first task is to demonstrate that this notion is intrinsic, meaning the law of $D\tilde{u}(X_0)$ does not depend on the particular choice of the random variable $X_0$ satisfying $\mathcal{L}(X_0) = \mu_0$. This is ensured by Proposition 5.24 in \cite{CarDel18}, which we restate here for completeness:

\begin{proposition}{\label{prop: 5.24 CarDel18}}
Let $u$ be a real-valued function on $\mathcal{P}_2(\R)$ and $\tilde{u}$ its lifting to $L^2(\Omega, \mathcal{F}, \mathbb{P}; \R)$. If $u$ is L-differentiable at $\mu_0 \in \mathcal{P}_2(\R)$ in the sense of Definition \ref{def: L-diff}, then the lifting $\tilde{u}$ is differentiable at any $X \in L^2(\Omega, \mathcal{F}, \mathbb{P}; \R)$ such that $\mu_0 = \mathcal{L}(X)$, and the law of the pair $(X, D\tilde{u}(X))$ does not depend on the choice of $X$ as long as $\mu_0 = \mathcal{L}(X)$. 
\end{proposition}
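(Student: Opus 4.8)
The plan is to prove the two assertions — Fréchet differentiability of $\tilde u$ at every $X$ with $\mathcal{L}(X)=\mu_0$, and the fact that the law of the pair $(X,D\tilde u(X))$ is the same for all such $X$ — at once, by first determining the \emph{structure} of the gradient at the single point $X_0$ where differentiability is assumed, and then transporting it to an arbitrary $X$ by a coupling argument. The whole argument rests on the standing hypothesis that $(\Omega,\mathcal F,\mathbb P)$ is atomless and Polish: this is precisely what guarantees that $L^2(\Omega,\mathcal F,\mathbb P;\R)$ is rich enough to realise arbitrary couplings having $X_0$ as a prescribed marginal and to carry non-trivial fibre-wise measure-preserving rearrangements. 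The key reduction is that $\tilde u(Z)=u(\mathcal L(Z))$ is law-invariant, so every increment $\tilde u(X+Y)-\tilde u(X)=u(\mathcal L(X+Y))-u(\mu_0)$ depends on $(X,Y)$ only through the joint law $\mathcal L(X,Y)$.

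First I would record the invariance and its differential consequence. For a measure-preserving bijection $\tau$ of $\Omega$, the map $R_\tau\colon Z\mapsto Z\circ\tau$ is a linear isometry of $L^2$ with adjoint $R_{\tau^{-1}}$, and since $Z\circ\tau\overset{d}{=}Z$ we have $\tilde u\circ R_\tau=\tilde u$. Differentiating this identity (chain rule) yields, at any differentiability point $W$, the covariance rule $D\tilde u(W)=D\tilde u(W\circ\tau)\circ\tau^{-1}$. The main structural step is then to show that $D\tilde u(X_0)$ is $\sigma(X_0)$-measurable, i.e.\ $D\tilde u(X_0)=g(X_0)$ for a Borel function $g\colon\R\to\R$. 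Applying the covariance rule to those $\tau$ that fix $X_0$ (so $X_0\circ\tau=X_0$ a.s.) gives $D\tilde u(X_0)\circ\tau=D\tilde u(X_0)$ for every such $\tau$; and on an atomless standard space the random variables invariant under all measure-preserving bijections fixing $X_0$ are exactly the $\sigma(X_0)$-measurable ones (up to null sets), since a variable depending on more than $X_0$ could be altered by a rearrangement acting inside the conditional fibres of $\mathbb P$ given $X_0$.

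With $D\tilde u(X_0)=g(X_0)$ in hand I would transport to an arbitrary $X$ with $\mathcal L(X)=\mu_0$. Given a direction $Y\in L^2$, let $\rho:=\mathcal L(X,Y)$, whose first marginal is $\mu_0$; using the atomless (rich) structure I realise $\rho$ as the law of a pair $(X_0,Y_0)$ with $X_0$ the given base point (constructing $Y_0=\Phi(X_0,U)$ from the regular conditional law of $Y$ given $X$ and a uniform $U$ available off $\sigma(X_0)$), so that $(X_0,Y_0)\overset{d}{=}(X,Y)$ and in particular $\|Y_0\|_{L^2}=\|Y\|_{L^2}$. Law-invariance gives $\tilde u(X+Y)-\tilde u(X)=\tilde u(X_0+Y_0)-\tilde u(X_0)$, and Fréchet differentiability at $X_0$ turns this into $\E[g(X_0)Y_0]+o(\|Y_0\|_{L^2})$. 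Since $(X_0,Y_0)\overset{d}{=}(X,Y)$ forces $\E[g(X_0)Y_0]=\E[g(X)Y]$, and the error is controlled by $\|Y_0\|_{L^2}=\|Y\|_{L^2}$, this shows $\tilde u$ is Fréchet differentiable at $X$ with $D\tilde u(X)=g(X)$. Consequently $(X,D\tilde u(X))=(X,g(X))\overset{d}{=}(X_0,g(X_0))$, so the law is identical for every $X$ of law $\mu_0$; swapping the roles of $X_0$ and a second base point shows $g$ depends on $\mu_0$ only, and one then writes $g=\partial_\mu u(\mu_0,\cdot)$.

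The main obstacle is the structural step, namely proving that the gradient at $X_0$ is genuinely a function of $X_0$ alone. The covariance rule by itself only yields invariance under the stabiliser of $X_0$, and converting this into $\sigma(X_0)$-measurability requires the fibre-wise rearrangement construction, which is exactly where the atomless, Polish nature of $(\Omega,\mathcal F,\mathbb P)$ is indispensable; the same richness is what makes the coupling in the transfer step available. Once this measurability is established, the remaining steps are routine couplings together with a direct verification of the definition of the Fréchet derivative.
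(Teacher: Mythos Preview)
The paper does not give its own proof of this proposition; it is restated from \cite{CarDel18} (Proposition~5.24) as background, with the proof left to that reference.

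Your strategy---law-invariance, the covariance of the gradient under measure-preserving automorphisms, the stabiliser argument yielding $D\tilde u(X_0)=g(X_0)$, and a coupling to transfer to arbitrary $X$---is the standard one and is essentially what the cited source does. There is, however, a genuine gap in the transfer step as written. You construct $Y_0=\Phi(X_0,U)$ with a uniform $U$ ``available off $\sigma(X_0)$'', but atomlessness of $(\Omega,\mathcal F,\mathbb P)$ does not by itself provide such a $U$: if $\sigma(X_0)=\mathcal F$ (for instance $\Omega=[0,1]$, $\mathbb P=\mathrm{Leb}$, $X_0=\mathrm{id}$) there is no independent randomness left, and the coupling $(X_0,Y_0)\overset{d}{=}(X,Y)$ with \emph{prescribed} first coordinate $X_0$ can genuinely fail---take $X(\omega)=2\omega\bmod 1$ and any direction $Y$ that is not $\sigma(X)$-measurable, so that the conditional law of $Y$ given $X$ is non-degenerate while every $Y_0$ on this space is a deterministic function of $X_0$. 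The proof in \cite{CarDel18} confronts exactly this obstruction and needs an additional step (roughly, showing that Fr\'echet differentiability of the lifting is inherited on an enlargement of the probability space, so that one may work on $\Omega\times[0,1]$ where the auxiliary $U$ is available). Your sketch treats this reduction as automatic.
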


The following result elucidates the structure of the \( L \)-derivative of a function defined on probability measures and provides the precise form in which it will be utilized. Consequently, we present its statement here; readers interested in the proof can refer to Proposition 5.25 in \cite{CarDel18}.

\begin{proposition}{\label{prop: 5.25 CarDel18}}
Let \( u \) be a real-valued, continuously \( L \)-differentiable function on \( \mathcal{P}_2(\mathbb{R}) \), and let \( \tilde{u} \) denote its lifting to \( L^2(\Omega, \mathcal{F}, \mathbb{P}; \mathbb{R}) \). Then, for any \( \mu \in \mathcal{P}_2(\mathbb{R}) \), there exists a measurable function \( \xi: \mathbb{R} \rightarrow \mathbb{R} \) such that for all \( X \in L^2(\Omega, \mathcal{F}, \mathbb{P}; \mathbb{R}) \) with \( \mathcal{L}(X) = \mu \), it holds that \( D \tilde{u}(X) = \xi(X) \) almost surely.
\end{proposition}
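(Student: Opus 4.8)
The plan is to reduce the statement to a single measurability fact and then upgrade it to independence of the representative via Proposition~\ref{prop: 5.24 CarDel18}. Writing $D\tilde u(X)\in L^2(\Omega,\mathcal F,\mathbb P;\mathbb R)$ for the Riesz representative of the Fréchet derivative of $\tilde u$ at $X$, the core of the argument is to show that $D\tilde u(X)$ is $\sigma(X)$-measurable. Once this is granted, the Doob--Dynkin lemma produces, for the fixed $X$, a Borel function $\xi_X:\mathbb R\to\mathbb R$ with $D\tilde u(X)=\xi_X(X)$ almost surely. To obtain a single $\xi$ valid for every $X$ with $\mathcal L(X)=\mu$, I would invoke Proposition~\ref{prop: 5.24 CarDel18}: for two such $X,X'$ it gives $(X,D\tilde u(X))\overset{d}{=}(X',D\tilde u(X'))$, that is $(X,\xi_X(X))\overset{d}{=}(X',\xi_{X'}(X'))$. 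Since these laws are the images of $\mu$ under $x\mapsto(x,\xi_X(x))$ and $x\mapsto(x,\xi_{X'}(x))$, equality forces $\xi_X=\xi_{X'}$ $\mu$-almost everywhere, so a common $\xi$ may be selected, which is exactly the asserted representation.

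To establish the $\sigma(X)$-measurability I would exploit the invariance of $\tilde u$ under measure-preserving rearrangements of the underlying atomless space. For any invertible, $\mathbb P$-preserving transformation $\Phi$ of $(\Omega,\mathcal F,\mathbb P)$, the composition operator $R_\Phi:V\mapsto V\circ\Phi$ is a linear isometry of $L^2(\Omega,\mathcal F,\mathbb P;\mathbb R)$ and satisfies $\mathcal L(V\circ\Phi)=\mathcal L(V)$; hence $\tilde u(V\circ\Phi)=u(\mathcal L(V\circ\Phi))=u(\mathcal L(V))=\tilde u(V)$, i.e.\ $\tilde u\circ R_\Phi=\tilde u$. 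Differentiating this identity by the chain rule, and using that $R_\Phi$ is an isometry with adjoint $R_{\Phi^{-1}}$ together with the change-of-variables formula for the $\mathbb P$-preserving map $\Phi$, yields the pointwise transformation rule
\[
D\tilde u(V\circ\Phi)=\big(D\tilde u(V)\big)\circ\Phi,\qquad V\in L^2(\Omega,\mathcal F,\mathbb P;\mathbb R).
\]
Specializing to $V=X$ and to those $\Phi$ in the group $G_X$ of $\mathbb P$-preserving transformations that fix $X$ (i.e.\ $X\circ\Phi=X$ almost surely) gives $D\tilde u(X)=\big(D\tilde u(X)\big)\circ\Phi$ for every $\Phi\in G_X$; thus $D\tilde u(X)$ is $G_X$-invariant.

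The remaining step, which I expect to be the main obstacle, is the measure-theoretic fact that the $G_X$-invariant elements of $L^2$ are exactly the $\sigma(X)$-measurable ones; this is precisely where the standing assumption that $(\Omega,\mathcal F,\mathbb P)$ is Polish and atomless is essential (it already fails on a two-point space carrying atoms of unequal mass, where a genuinely non-$\sigma(X)$-measurable invariant exists). The inclusion $L^2(\Omega,\sigma(X),\mathbb P)\subseteq\mathrm{Fix}(G_X)$ is immediate. For the converse I would disintegrate $\mathbb P$ along $X$ and argue that, modulo conditional null sets, $G_X$ acts transitively on each fiber of $X$: atomlessness allows one to construct, fiberwise, $\mathbb P$-preserving maps exchanging two level sets of equal conditional mass of a putative non-$\sigma(X)$-measurable $Z$, and a measurable selection patches these fiberwise maps into a global element of $G_X$ that moves $Z$ in $L^2$, a contradiction. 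Applying this to $Z=D\tilde u(X)$ gives its $\sigma(X)$-measurability and, combined with the first paragraph, completes the proof. The technical heart lies in making the fiberwise construction and the measurable patching rigorous uniformly in the (possibly atomic, on the null fibers) conditional laws $\mathbb P_x$.
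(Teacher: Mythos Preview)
The paper does not supply its own proof of this proposition: it merely records the statement and refers the reader to Proposition~5.25 in \cite{CarDel18}. So there is no ``paper's proof'' to compare against beyond that citation.

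Your strategy is the standard one and is essentially what appears in \cite{CarDel18}: exploit the law-invariance $\tilde u\circ R_\Phi=\tilde u$ for measure-preserving $\Phi$, differentiate to get the equivariance $D\tilde u(V\circ\Phi)=(D\tilde u(V))\circ\Phi$, specialize to $\Phi\in G_X$ to obtain $G_X$-invariance of $D\tilde u(X)$, and then reduce to $\sigma(X)$-measurability plus Doob--Dynkin, with Proposition~\ref{prop: 5.24 CarDel18} handling independence of the representative. You have also correctly located the only nontrivial step: that $G_X$-invariance forces $\sigma(X)$-measurability on an atomless Polish space. Your proposed fiberwise argument is the right idea; in \cite{CarDel18} this is handled by passing to the canonical model $([0,1],\mathrm{Leb})$ via an isomorphism of atomless standard probability spaces, where the required rearrangements are explicit and the measurable patching is avoided. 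That route is cleaner than constructing global $\Phi$'s by measurable selection, and you may find it worth adopting. Aside from this technical packaging, your outline is sound and aligned with the reference.
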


By stating that \( u \) is continuously \( L \)-differentiable, we mean that the Fréchet derivative \( D \tilde{u}(X) \) of the lifting \( \tilde{u} \) is a continuous function of \( X \), mapping from the space \( L^2(\Omega, \mathcal{F}, \mathbb{P}; \mathbb{R}) \) into itself. It is important to note that the function \( \xi \) defined in the proposition is uniquely determined \( \mu \)-almost everywhere on \( \mathbb{R} \) and satisfies the condition \( \int_{\mathbb{R}} |\xi(x)|^2 d\mu(x) < \infty \). Furthermore, both sides of the equation \( D\tilde{u}(X) = \xi(X) \) are evaluated at \( X \). However, the interpretation of these evaluations differs: \( D\tilde{u} \) is considered a mapping from \( L^2(\Omega, \mathcal{F}, \mathbb{P}; \mathbb{R}) \) evaluated at the random variable \( X \) (yielding another \( \mathbb{R} \)-valued random variable on \( (\Omega, \mathcal{F}, \mathbb{P}) \)), whereas \( \xi \) is a mapping from \( \mathbb{R} \) into itself, evaluated at each realization of the random variable \( X \). Therefore, for almost every \( \omega \in \Omega \), it holds that \( [D\tilde{u}(X)](\omega) = \xi(X(\omega)) \).

It is noteworthy that Proposition \ref{prop: 5.24 CarDel18} implies that the distribution of the \( L \)-derivative of \( u \) at \( \mu_0 \), when considered as a random variable, depends solely on the law \( \mu_0 \) and not on the specific random variable \( X_0 \) with law \( \mu_0 \). The Fréchet derivative \( [D \tilde{u}](X_0) \) is referred to as the representation of the \( L \)-derivative of \( u \) at \( \mu_0 \) along the variable \( X_0 \). Since it is considered an element of \( L^2(\Omega, \mathcal{F}, \mathbb{P}; \mathbb{R}) \), whenever \( X \) and \( X_0 \) are random variables with distributions \( \mu \) and \( \mu_0 \), respectively, we have the following expression:

\[
u(\mu) = u(\mu_0) + [D \tilde{u}](X_0) \cdot (X - X_0) + o(\|X - X_0\|_2),
\]
where \( \cdot \) denotes the \( L^2 \)-inner product in \( L^2(\Omega, \mathcal{F}, \mathbb{P}; \mathbb{R}) \), and \( \|\cdot\|_2 \) represents the associated norm.

Proposition \ref{prop: 5.25 CarDel18} indicates that, as a random variable, this Fréchet derivative takes the form \( \xi(X_0) \) for some deterministic function \( \xi: \mathbb{R} \rightarrow \mathbb{R} \), uniquely defined \( \mu_0 \)-almost everywhere on \( \mathbb{R} \). Given that the equivalence class of \( \xi \) in \( L^2(\mathbb{R}, \mu_0; \mathbb{R}) \) is uniquely determined, we can denote this Fréchet derivative as \( \partial_\mu u (\mu_0) \) (or simply \( \partial u (\mu_0) \) when there is no risk of confusion). We refer to \( \partial_\mu u (\mu_0) \) as the \( L \)-derivative of \( u \) at \( \mu_0 \) and identify it with the function \( \partial_\mu u(\mu_0)(\cdot): \mathbb{R} \ni x \mapsto \partial_\mu u(\mu_0)(x) \in \mathbb{R} \). With this notation, we can rewrite the earlier equation as 
\[
u(\mu) = u(\mu_0) + \mathbb{E}[\partial_\mu u(\mathcal{L}(X_0))(X_0) \cdot (X - X_0)] + o(\|X - X_0\|_2).
\]

This construction allows us to express \( [D \tilde{u}](X_0) \) as a function of any random variable \( X_0 \) with distribution \( \mu_0 \), irrespective of where this random variable is defined, thereby attributing meaning to the \( L \)-derivative of \( u \) at \( \mu_0 \) independently of the chosen lifting.

To gain a better understanding of this distinctive form of differentiation, we refer to Section 5.2.2 of \cite{CarDel18}, where the authors illustrate the behavior of such differentiation by computing the \( L \)-derivative in several fundamental examples. We will now turn our attention to the \( L \)-differentiability of functions of empirical measures, which is the primary focus of our preliminaries.

The somewhat intricate notion of differentiability for functions of empirical measures can be better grasped through the concept of empirical projection, defined as follows:

\begin{definition}{\label{def: empirical projection}}
Given a function \( u : \mathcal{P}_2(\mathbb{R}) \rightarrow \mathbb{R} \) and an integer \( N \geq 1 \), we define the empirical projection of \( u \) onto \( \mathbb{R} \) as:
\[
u^N: \mathbb{R}^N \ni (x_1, \dots, x_N) \mapsto u\left(\frac{1}{N} \sum_{i=1}^N \delta_{x_i}\right).
\]
\end{definition}

The following result establishes a connection between the \( L \)-derivative of a function on probability measures and the standard partial derivatives of its empirical projection. For the proof, the reader may refer to \cite{CarDel18}, where this result is presented as Proposition 5.35.

\begin{proposition}{\label{prop: deriv empirical measure}}
If \( u : \mathcal{P}_2 (\mathbb{R}) \rightarrow \mathbb{R} \) is continuously \( L \)-differentiable, then its empirical projection \( u^N \) is differentiable on \( \mathbb{R}^N \), and for all \( i \in \{1, \dots, N\} \),
\[
\partial_{x_i} u^N (x_1, \dots, x_N) = \frac{1}{N} \partial_\mu u\left(\frac{1}{N} \sum_{j=1}^N \delta_{x_j}\right)(x_i).
\]
\end{proposition}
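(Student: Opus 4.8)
The plan is to reduce the computation of the partial derivative $\partial_{x_i}u^N$ to a single directional Fréchet derivative of the lifting $\tilde u$, exploiting the fact that displacing one atom $x_i$ of the empirical measure corresponds to perturbing a carefully chosen representative random variable along a single indicator direction. Fix $(x_1,\dots,x_N)\in\mathbb{R}^N$ and an index $i$, and set $\mu_0:=\frac1N\sum_{j=1}^N\delta_{x_j}$. Since $(\Omega,\mathcal{F},\mathbb{P})$ is atomless and Polish, I would partition $\Omega$ into measurable sets $A_1,\dots,A_N$ with $\mathbb{P}(A_j)=\frac1N$ and define $X_0:=\sum_{j=1}^N x_j\,\mathbf{1}_{A_j}$, so that $\mathcal{L}(X_0)=\mu_0$. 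For $h\in\mathbb{R}$, set $X_h:=X_0+h\,\mathbf{1}_{A_i}$; then $X_h=x_j$ on $A_j$ for $j\neq i$ and $X_h=x_i+h$ on $A_i$, whence $\mathcal{L}(X_h)=\frac1N\sum_{j\neq i}\delta_{x_j}+\frac1N\delta_{x_i+h}$, which is exactly the empirical measure obtained by replacing $x_i$ with $x_i+h$. Consequently $u^N(x_1,\dots,x_i+h,\dots,x_N)=\tilde u(X_h)$ and $u^N(x_1,\dots,x_N)=\tilde u(X_0)$.

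Because $u$ is continuously $L$-differentiable, $\tilde u$ is Fréchet differentiable at $X_0$, and by Proposition~\ref{prop: 5.25 CarDel18} we have $D\tilde u(X_0)=\partial_\mu u(\mu_0)(X_0)$ almost surely. Applying the Fréchet expansion in the direction $V:=h\,\mathbf{1}_{A_i}$ yields
\[
\tilde u(X_h)=\tilde u(X_0)+\mathbb{E}\big[\partial_\mu u(\mu_0)(X_0)\,h\,\mathbf{1}_{A_i}\big]+o(\|V\|_2).
\]
On $A_i$ one has $X_0=x_i$, so $\partial_\mu u(\mu_0)(X_0)=\partial_\mu u(\mu_0)(x_i)$ there, and since $\mathbb{P}(A_i)=\frac1N$ the middle term equals $\frac hN\,\partial_\mu u(\mu_0)(x_i)$; moreover $\|V\|_2=|h|/\sqrt N$, so the remainder is $o(h)$. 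Dividing by $h$ and letting $h\to0$ gives $\partial_{x_i}u^N(x_1,\dots,x_N)=\frac1N\partial_\mu u(\mu_0)(x_i)$, which is the asserted formula. Note that, by Proposition~\ref{prop: 5.24 CarDel18}, the law of $(X_0,D\tilde u(X_0))$ does not depend on the particular construction of $X_0$, so the partition chosen above is immaterial.

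It then remains to upgrade the existence of all partial derivatives to genuine differentiability of $u^N$ on $\mathbb{R}^N$. For this I would show that each map $(x_1,\dots,x_N)\mapsto\frac1N\partial_\mu u\big(\frac1N\sum_j\delta_{x_j}\big)(x_i)$ is continuous, drawing on the continuity of the $L$-derivative built into the hypothesis together with the joint continuity of $(\mu,x)\mapsto\partial_\mu u(\mu)(x)$; continuous partial derivatives on $\mathbb{R}^N$ then force $u^N\in C^1(\mathbb{R}^N)$, hence in particular differentiability.

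The main delicate point is precisely this last continuity statement. The object $\partial_\mu u(\mu_0)(\cdot)$ is a priori defined only $\mu_0$-almost everywhere, as an equivalence class in $L^2(\mathbb{R},\mu_0)$, so evaluating it at the specific point $x_i$ and tracking how that value varies with $(x_1,\dots,x_N)$ is not automatic. What saves the argument at a fixed base point is that $x_i$ is an atom of $\mu_0$ (of mass at least $\frac1N$), so the pointwise value $\partial_\mu u(\mu_0)(x_i)$ is unambiguously singled out by the equivalence class; what is then needed to propagate this into continuity — and hence into differentiability rather than mere Gâteaux directional derivatives — is the continuous-version assumption on the $L$-derivative. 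Once the pointwise evaluation is legitimate and jointly continuous, the remaining steps are routine.
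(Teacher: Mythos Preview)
The paper does not supply its own proof of this proposition; it simply refers the reader to \cite{CarDel18}, Proposition~5.35. Your argument is the standard one used there and is correct. One small addition that closes the continuity step you flag at the end: keeping the \emph{same} partition $(A_j)_j$ and letting $Y_0:=\sum_j y_j\mathbf{1}_{A_j}$, the map $(y_1,\dots,y_N)\mapsto Y_0$ is continuous from $\mathbb{R}^N$ into $L^2(\Omega)$, so continuous $L$-differentiability gives $D\tilde u(Y_0)\to D\tilde u(X_0)$ in $L^2$ as $y\to x$; since both random variables are constant on each $A_j$ (with value $\partial_\mu u(\cdot)(y_j)$ resp.\ $\partial_\mu u(\cdot)(x_j)$) and $\mathbb{P}(A_j)=1/N>0$, this $L^2$-convergence forces $\partial_\mu u\big(\tfrac1N\sum_k\delta_{y_k}\big)(y_i)\to\partial_\mu u\big(\tfrac1N\sum_k\delta_{x_k}\big)(x_i)$ for every $i$, yielding the continuity of the partials and hence $u^N\in C^1(\mathbb{R}^N)$.
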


This result will prove particularly useful when we compute the Malliavin derivative of the interacting particle system (see Point 1 of Lemma \ref{l: deriv-bds} below).

\subsection{Technical results}
In order to prove our main results, we need to ensure some technical bounds such as the ones gathered in next lemma.
\begin{lemma}{\label{l: moments}}
Grant Assumptions \ref{as: lip}-\ref{as: moments}. Then, for all $q \ge 2$, $0 \le s < t \le T$, $i \in \{ 1, ..., N \}$, $N \in \mathbb{N}$, the following hold true.
\begin{enumerate}
    \item $\sup_{t \in [0,T]} \E[|X_t^{ i, N}|^q] + \sup_{t \in [0,T]} \E[|\bar{X}_t^{ i}|^q] + \sup_{t \in [0,T]} \E[W_2^q(\mu_t^{N}, \delta_0)] < c$.
    \item 
    $\E[|X_t^{i, N} - X_s^{i, N}|^q] + \E[W_2^q(\mu_t^{N}, \mu_s^{N})]  \le c [(t - s)^{Hq}\land 1] $.
    \item  $\E[|\bar{X}_t^{i} - \bar{X}_s^{i}|^q] + \E[W_2^q(\bar{\mu}_t, \bar{\mu}_s)]  \le c [(t - s)^{Hq}\land 1]$.
\end{enumerate}
\end{lemma}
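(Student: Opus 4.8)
The plan is to treat the three statements in order, since Part 1 feeds into Parts 2 and 3. Throughout I use that $\Theta$ is compact, so $\|\theta^0\| \le c$, and hence by the linear growth bound \eqref{eq: linear b} the drift satisfies $|\langle \theta^0, b(x,\mu)\rangle| \le c(1+|x|+W_2(\mu,\delta_0))$. I also record the two standard facts about the driving noise: since $B^{H,i}_t - B^{H,i}_s$ is centered Gaussian with variance $|t-s|^{2H}$, one has $\E[|B^{H,i}_t - B^{H,i}_s|^q] = c_q|t-s|^{Hq}$, and by Fernique's theorem together with the path continuity of fBm, $\E[\sup_{r \le T}|B^{H,i}_r|^q] < \infty$ for every $q$.

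For Part 1, I would start from the integral form $X^{i,N}_t = X^{i,N}_0 + \int_0^t \langle\theta^0, b(X^{i,N}_s,\mu^N_s)\rangle\, ds + \sigma B^{H,i}_t$, take the supremum over $r \le t$, raise to the power $q$ and take expectations, using Jensen on the time integral to obtain $\E[\sup_{r\le t}|X^{i,N}_r|^q] \le c_q\big(\E[|X^{i,N}_0|^q] + t^{q-1}\int_0^t \E[(1+|X^{i,N}_s| + W_2(\mu^N_s,\delta_0))^q]\,ds + \sigma^q \E[\sup_{r\le T}|B^{H,i}_r|^q]\big)$. The crucial step, and the only place where the interaction enters, is to control the measure term uniformly in $N$: since $W_2^2(\mu^N_s,\delta_0) = \frac1N\sum_{k}|X^{k,N}_s|^2$, convexity of $x\mapsto x^{q/2}$ (valid as $q\ge2$) gives $W_2^q(\mu^N_s,\delta_0) \le \frac1N\sum_k |X^{k,N}_s|^q$, and by exchangeability of the particles $\E[W_2^q(\mu^N_s,\delta_0)] \le \E[|X^{1,N}_s|^q]$. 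This decouples the system: writing $\psi(t) := \E[\sup_{r\le t}|X^{1,N}_r|^q]$ (the same for every particle by symmetry), I arrive at $\psi(t) \le A + B\int_0^t \psi(s)\,ds$ with $A,B$ depending on $q,T,\sigma$ but not on $N$ (finiteness of $A$ uses Assumption \ref{as: moments} for the initial moment), and Gronwall yields a bound uniform in $N$. The bound for $\bar X^i$ is identical, with $W_2(\bar\mu_s,\delta_0) = (\E[|\bar X^i_s|^2])^{1/2}$ now deterministic; and $\sup_t\E[W_2^q(\mu^N_t,\delta_0)] \le \sup_t\E[|X^{1,N}_t|^q]$ follows from the same convexity estimate.

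For Parts 2 and 3, I would subtract the integral representations at times $s$ and $t$, writing $X^{i,N}_t - X^{i,N}_s = \int_s^t\langle\theta^0,b(X^{i,N}_r,\mu^N_r)\rangle\, dr + \sigma(B^{H,i}_t - B^{H,i}_s)$. Raising to the power $q$, the drift contribution is bounded by $c(t-s)^{q-1}\int_s^t \E[(1+|X^{i,N}_r|+W_2(\mu^N_r,\delta_0))^q]\,dr \le c(t-s)^q$ using the uniform moment bounds of Part 1, while the noise contributes $c_q(t-s)^{Hq}$. Since $t-s\le T$ and $Hq \le q$, the term $(t-s)^q \le T^{q(1-H)}(t-s)^{Hq}$ is absorbed, giving $\E[|X^{i,N}_t-X^{i,N}_s|^q]\le c(t-s)^{Hq}$; the $\wedge 1$ follows from the crude bound $\E[|X^{i,N}_t - X^{i,N}_s|^q]\le c$ obtained from Part 1. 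For the Wasserstein increments I would use the synchronous coupling $\frac1N\sum_i \delta_{(X^{i,N}_t,\, X^{i,N}_s)} \in \Gamma(\mu^N_t,\mu^N_s)$, so that $W_2^q(\mu^N_t,\mu^N_s) \le \frac1N\sum_i|X^{i,N}_t-X^{i,N}_s|^q$ by convexity, and exchangeability gives $\E[W_2^q(\mu^N_t,\mu^N_s)] \le \E[|X^{1,N}_t-X^{1,N}_s|^q]$. Part 3 is the same computation for the McKean–Vlasov process, using the coupling $(\bar X^i_t,\bar X^i_s)$ to bound the deterministic distance $W_2(\bar\mu_t,\bar\mu_s)\le (\E[|\bar X^i_t-\bar X^i_s|^2])^{1/2}$.

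The main obstacle is the uniform-in-$N$ control in Part 1: a naive bound routed through $\max_i|X^{i,N}_0|$ or $\max_i\sup_r|B^{H,i}_r|$ would blow up with $N$, so it is essential to pass the measure dependence through the convexity-plus-exchangeability reduction $\E[W_2^q(\mu^N_s,\delta_0)]\le\E[|X^{1,N}_s|^q]$, which closes the Gronwall argument on a single generic particle. A secondary technical point is the a priori finiteness of $\psi(t)$ required before invoking Gronwall; this is handled by the usual localization at the stopping times $\tau_M = \inf\{t : |X^{1,N}_t| \ge M\}$, applying the estimate to the stopped process and letting $M\to\infty$ via Fatou's lemma.
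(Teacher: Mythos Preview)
Your proposal is correct and follows essentially the same route as the paper's proof: linear growth of the drift, the convexity-plus-exchangeability reduction $\E[W_2^q(\mu^N_s,\delta_0)]\le\E[|X^{1,N}_s|^q]$ to close the Gronwall loop on a single particle in Part 1, and then the direct increment estimate in Parts 2--3 with the synchronous coupling for the Wasserstein term. Your added remark on localization to justify a priori finiteness is a nice technical point that the paper leaves implicit.
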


As previously mentioned, our analysis heavily depends on the use of Malliavin derivatives for both processes involved in our study: the interacting particle system and the independent particle system. The following lemma provides some identities characterising them, along with bounds that will prove useful in the subsequent sections. The detailed proof of this lemma can be found in Section \ref{s: proof technical}.


\begin{lemma}\label{l: deriv-bds}
    Grant Assumptions \ref{as: lip}, \ref{as: bound}, \ref{as: moments}, and \ref{as: deriv}(b). {\rev Then, the Malliavin derivatives of \(X^i\) and $\bar{X}^i$ exist, are unique, and square-integrable. Moreover,} for each $i, j=1,\dots ,N$, let $D^j X^{i, N}$ and $D^j\bar{X}^i$ denote the Malliavin derivatives of the solution processes $X^{i, N}$ and $\bar{X}^i$ with respect to $B^{H,j}$. The following hold:
    \begin{enumerate}
        \item For $s\leq t$:
        \begin{align}{\label{eq: Malliavin indep}}
           D_s^j\bar{X}^i_t &=1_{\{i=j\}}\left(\sigma +\int_s^t \langle\theta^0,\, \partial_x b(\bar{X}^i_r, \bar{\mu}_r)\rangle D^i_s\bar{X}^i_r dr\right),\\ 
           D_s^j{X}^{i,N}_t &=\sigma 1_{\{i=j\}}+\int_s^t \langle\theta^0,\,  \Big(\partial_x b({X}^{i,N}_r, {\mu}^N_r)D^{j}_s {X}^{i,N}_r + \frac{1}{N}\sum_{k=1}^N (\partial_{\mu}b)({X}^{i,N}_r, \mu^N_r)(X^{k,N}_r)D_s^{j} X^{k,N}_r \Big)\rangle dr, \nonumber
        \end{align}
        and for $s>t$, these derivatives are zero.
        \item Both \( D_s^i \bar{X}^i_t \) and \( D_s^i {X}^{i,N}_t \) are uniformly bounded, with the bound depending on \( T \) but independent of \( N \), \( t \), or \( s \). Moreover, for $j \neq i$, $|D_s^j X_t^{i,N}| \le \frac{c}{N}$.
        \item For $u\leq v$, $s\leq t$:
        \begin{align*}
            &|D^i_u\bar X^i_t-D^i_v\bar X^i_t| \leq c |u-v|,\\
            & |D^i_u\bar X^i_t-D^i_u\bar X^i_s| \leq c |t-s|,\\
            & |D^i_u\bar X^i_t-D^i_v\bar X^i_t- D^i_u\bar X^i_s + D^i_v\bar X^i_s| \leq c (1\wedge |u-v|)(1\wedge |t-s|).
        \end{align*}
   \item For any $i, j \in \{1, ... , N \}$, $u\leq v$, $s\leq t$:
        \begin{align*}
            &|D^j_u X^{i,N}_t-D^j_vX^{i, N}_t| \leq c |u-v|(1_{i = j} + \frac{1}{N}),\\
            & |D^j_u X^{i, N}_t-D^j_u X^{i,N}_s| \leq c |t-s|(1_{i = j} + \frac{1}{N}),\\
            & |D^j_u X^{i,N}_t-D^j_v X^{i,N}_t- D^j_u X^{i,N}_s + D^j_v X^{i,N}_s| \leq c (1\wedge |u-v|)(1\wedge |t-s|)(1_{i = j} + \frac{1}{N}).
        \end{align*}
    \end{enumerate}
\end{lemma}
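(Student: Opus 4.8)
The plan is to treat the four points in order, exploiting that both systems have additive noise with a drift whose $x$- and measure-derivatives are bounded under Assumption \ref{as: deriv}(b). For the preliminary existence, uniqueness and square-integrability of the Malliavin derivatives, I would run the standard Picard scheme: set $X^{(0),i}\equiv X_0^i$ and iterate the defining integral equation. Each iterate is a smooth functional of finitely many increments of $B^H$, hence lies in $\mathbb{D}^{1,2}$, and applying $D^j_s$ to the recursion produces a linear recursion for the derivatives. Using the boundedness of $\partial_x b$ and $\partial_\mu b$ together with the Meyer-type estimate \eqref{eq: Meyer}, a Gronwall argument shows the derivatives form a Cauchy sequence in $L^2(\Omega;\mathcal H^N)$; their limit identifies $X^{i,N}_t\in\mathbb D^{1,2}$ and its derivative. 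The same scheme applies verbatim to the McKean--Vlasov solution $\bar X^i$.

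For Point 1, I would differentiate the integral form of \eqref{eq: model} (resp. \eqref{eq: McK}). The only nontrivial term is $D^j_s b_m(X^{i,N}_r,\mu^N_r)$; here the chain rule for the Lions derivative combined with the empirical-projection identity of Proposition \ref{prop: deriv empirical measure} yields exactly \eqref{eq: DFx}, i.e. a pointwise term $\partial_x b_m\, D^j_s X^{i,N}_r$ plus the interaction sum $\frac1N\sum_k(\partial_\mu b_m)(\cdots)(X^{k,N}_r)D^j_s X^{k,N}_r$. Substituting gives the stated linear Volterra system, with source $\sigma 1_{\{i=j\}}$ coming from $D^j_s(\sigma B^{i,H}_t)=\sigma 1_{\{i=j\}}1_{[0,t]}(s)$; vanishing for $s>t$ is the usual adaptedness statement. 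In the independent case the measure $\bar\mu_r$ is deterministic and does not depend on $B^{i,H}$, so only the diagonal term survives, giving \eqref{eq: Malliavin indep}.

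Point 2 is where the interaction must be handled quantitatively. Write $a^{i}_t:=|D^j_s X^{i,N}_t|$ for fixed $s,j$, bound the coefficients by a constant $c$, and set $M_t=\max_i a^i_t$; then $M_t\le \sigma+\int_s^t 2c\,M_r\,dr$, so Gronwall yields a uniform bound $M_t\le \sigma e^{2cT}$ depending only on $T$ — this is the diagonal estimate, and the analogous scalar Gronwall on \eqref{eq: Malliavin indep} bounds $D^i_s\bar X^i_t$. For the off-diagonal $O(1/N)$ bound I would use a two-step argument: first average the system to control $S_t:=\frac1N\sum_k a^k_t$, obtaining $S_t\le \frac{\sigma}{N}+2c\int_s^t S_r\,dr$ and hence $S_t\le \frac{\sigma}{N}e^{2cT}$, since the single source $\sigma$ sits on particle $j$ and is diluted by the factor $1/N$; then for $i\ne j$ the direct source is absent, so $a^i_t\le c\int_s^t a^i_r\,dr+c\int_s^t S_r\,dr$, and a second Gronwall gives $a^i_t\le c/N$.

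Points 3 and 4 are the increment bounds. For the independent particle (Point 3) I would use the explicit solution of the scalar linear ODE \eqref{eq: Malliavin indep}, namely $D^i_u\bar X^i_t=\sigma\exp(\int_u^t\alpha_r\,dr)$ with $\alpha_r:=\langle\theta^0,\partial_x b(\bar X^i_r,\bar\mu_r)\rangle$, $|\alpha_r|\le c$ (the analogue of \eqref{eq: expression DX}). The $t$-increment bound is immediate from $|\int_s^t\alpha_r D^i_u\bar X^i_r\,dr|\le c|t-s|$; the $u$-increment factors as $\sigma\exp(\int_v^t\alpha)[\exp(\int_u^v\alpha)-1]$, giving $\le c|u-v|$; and the mixed second difference factors cleanly as $\sigma(e^{\int_u^v\alpha}-1)(e^{\int_v^t\alpha}-e^{\int_v^s\alpha})$, yielding $c(1\wedge|u-v|)(1\wedge|t-s|)$ once each factor is capped via $|e^x-1|\le\min(|x|e^{|x|},2e^{cT})$. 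For the interacting system (Point 4) no such explicit formula exists, so I would instead subtract the integral equations for the two derivatives and run Gronwall on the difference, again first on the particle-average and then on the individual index to preserve the $(1_{\{i=j\}}+\frac1N)$ prefactor; the residual integrals over the extra interval $[u,v]$ are controlled by the Point 2 bounds. I expect the most delicate bookkeeping — and the main obstacle — to be the rectangular increment in Point 4, where one must split the double difference over the relevant sub-rectangles and combine the Lipschitz-in-each-variable estimates while keeping the interaction factor sharp; the $1/N$ off-diagonal gain from Point 2 is essential there.
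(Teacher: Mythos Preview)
Your proposal is correct and, for Points 1, 2, and 4, essentially identical to the paper's argument: the paper also obtains existence via the empirical-projection reformulation (citing the classical SDE result rather than redoing the Picard iteration, but this is cosmetic), and for the bounds it uses precisely your two-pass scheme of first Gronwalling the particle average $\frac1N\sum_k|D^j_sX^{k,N}_r|$ and then feeding that back into the equation for an individual index.

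The one genuine difference is Point 3. You exploit the explicit exponential solution $D^i_u\bar X^i_t=\sigma\exp\bigl(\int_u^t\alpha_r\,dr\bigr)$ and factor the mixed second difference algebraically as $\sigma(e^{\int_u^v\alpha}-1)(e^{\int_v^t\alpha}-e^{\int_v^s\alpha})$, which is clean and immediate. The paper never invokes this closed form: it treats Point 3 exactly like Point 4, subtracting the integral equations and running Gronwall on the difference (e.g.\ bounding $|D^i_u\bar X^i_t-D^i_v\bar X^i_t|$ via $c|u-v|+c\int_u^t|D^i_u\bar X^i_r-D^i_v\bar X^i_r|\,dr$, and then bounding the rectangular increment by $\int_s^t\|\partial_x b\|\,|D^i_u\bar X^i_r-D^i_v\bar X^i_r|\,dr$). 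Your route is shorter for the independent particle but, as you note, does not extend to Point 4; the paper's route is slightly heavier but has the virtue of being the same argument twice, so Points 3 and 4 are proved in parallel with no new idea needed for the interacting case.
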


\begin{remark}
It is worth noting that, in the case of a classical ergodic SDE $(X_t)_{t \in [0, T]}$ driven by a fractional Brownian motion as studied in \cite{HuNuaZho19}, one can establish the explicit bound \( |D_s X_t| \le c_1 e^{-c_2 |t - s|} \). For independent particles, a similar result could be obtained by modifying the drift with the addition of a restoring force—necessary in the classical diffusion scenario to achieve ergodicity. However, such a bound does not appear achievable for the Malliavin derivative of interacting particles, where instead we find \( |D^j_s X_t^{i,N}| \le c_1 e^{c_2 |t - s|} \). 

Importantly, this bound is not a requirement in our context, as we work with a fixed time horizon \( T \) and will only need to analyze the integrability of functions of Malliavin derivatives for \( |t - s| \) close to zero. Thus, for our purposes, the exponential term in our bound—as well as in \cite{HuNuaZho19}—effectively behaves simply as $1$.

\end{remark}



Before proceeding further, let us introduce a function \( g:\mathbb{R} \times \mathcal{P}_l(\mathbb{R}) \to \mathbb{R} \) and use the shorthand notation \( g_t \) to represent either \( g(\bar{X}^i_t, \bar{\mu}^i_t) \) or \( g({X}^{i,N}_t, {\mu}^{i,N}_t) \), for any \( i \in \{1, \dots, N \} \). Additionally, we will simply denote \( D^i \) by \( D \). Finally, let us recall that, according to the preliminaries on Malliavin calculus provided in Section \ref{s: malliavin}, we can express the following:  
        \begin{align}
           & \E [ \|g_\cdot \|^2_{\mathcal H}] = \int_0^T\int_0^T \E [g_s g_r]\phi(s,r)ds \,dr, \label{eq: g >}\\
           & \E \|D_\cdot g_\cdot\|^2_{\mathcal H\otimes \mathcal{H}} =\int_0^T\int_0^T\int_0^T\int_0^T \E [D_ug_s D_v g_t] \phi(u,v) \phi(s,t)du\,dv \,dt \,ds, \label{eq: Dg >}
        \end{align}
        with $\phi$ as in \eqref{eq: def phi}.
This recall will be useful for the following technical result, whose proof is provided in Section \ref{s: proof technical}. We require this result because, in the proof of consistency, we will frequently apply the dominated convergence theorem. The justification for this application is given in the following lemma.

\begin{lemma}\label{l: L2-bound-loclip}
{\change Grant Assumptions \ref{as: lip}, \ref{as: bound} and \ref{as: moments}.} Let 
$H \in \left(\frac{1}{2},1\right)$ and let \( g:\mathbb{R} \times \mathcal{P}_l(\mathbb{R}) \to \mathbb{R} \) be a locally Lipschitz function in the sense of \eqref{eq: def loc lip}, satisfying Assumption \ref{as: deriv}(g) {\change and $|g(0, \delta_0)| < \infty$}. Then, in the integrals \eqref{eq: g >} and \eqref{eq: Dg >}, 
the integrands are uniformly bounded in \( N \) by integrable functions. As a consequence, both \( \mathbb{E} \left[ \|g_\cdot \|^2_{\mathcal{H}} \right] \) and \( \mathbb{E} [\|D_\cdot g_\cdot\|^2_{\mathcal{H} \otimes \mathcal{H}}] \) are finite. 


\end{lemma}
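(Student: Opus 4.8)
The plan is to dominate both integrands, uniformly in $N$, by an $N$-independent integrable function and then invoke the integrability of the kernel $\phi$ that is available precisely when $H>\frac12$. The key analytic fact exploited throughout is that for $H>\frac12$ the exponent $2H-2$ lies in $(-1,0)$, so the diagonal singularity of $\phi(s,r)=H(2H-1)|s-r|^{2H-2}$ is integrable and $\int_0^T\int_0^T\phi(s,r)\,ds\,dr<\infty$; consequently $\int_{[0,T]^4}\phi(u,v)\phi(s,t)\,du\,dv\,ds\,dt<\infty$ as the product of two finite integrals.

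First I would treat \eqref{eq: g >}. Since $g$ is locally Lipschitz in the sense of \eqref{eq: def loc lip} and $|g(0,\delta_0)|<\infty$, it has polynomial growth, i.e.\ $|g(x,\mu)|\le c(1+|x|^k+W_2^l(\mu,\delta_0))$. Writing $g_s$ for either $g(\bar X^i_s,\bar\mu_s)$ or $g(X^{i,N}_s,\mu^N_s)$ and applying Lemma \ref{l: moments}, the moments $\E[|X^{i,N}_s|^q]$ and $\E[W_2^q(\mu^N_s,\delta_0)]$ (and their barred analogues) are bounded uniformly in $s$ and $N$, so $\sup_s \E[g_s^2]\le c$ uniformly in $N$. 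By Cauchy--Schwarz, $|\E[g_s g_r]|\le (\E[g_s^2]\,\E[g_r^2])^{1/2}\le c$, whence the integrand in \eqref{eq: g >} is dominated by the $N$-independent function $c\,\phi(s,r)$, integrable on $[0,T]^2$. This yields both the uniform domination and the finiteness of $\E[\|g_\cdot\|^2_{\mathcal H}]$.

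Next I would handle \eqref{eq: Dg >}, where $D=D^i$. By the chain rule \eqref{eq: Df barX}--\eqref{eq: DFx} together with Lemma \ref{l: deriv-bds}, the derivative $D_u g_s$ vanishes for $u>s$, and for $u\le s$ in the interacting case it equals $\partial_x g(X^{i,N}_s,\mu^N_s) D_u^i X^{i,N}_s + \frac1N\sum_{k=1}^N (\partial_\mu g)(X^{i,N}_s,\mu^N_s)(X^{k,N}_s) D_u^i X^{k,N}_s$, the independent case being only the first term (the law $\bar\mu_s$ being deterministic). By Assumption \ref{as: deriv}(g) both $\partial_x g$ and $\partial_\mu g$ are uniformly bounded, while Lemma \ref{l: deriv-bds} gives $|D_u^i X^{i,N}_s|\le c$ and $|D_u^i X^{k,N}_s|\le c/N$ for $k\neq i$. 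Hence the diagonal term is $\le c$, and the empirical-average term is bounded by $\frac{c}{N}\big(|D_u^i X^{i,N}_s|+\sum_{k\neq i}|D_u^i X^{k,N}_s|\big)\le \frac{c}{N}(c+(N-1)\tfrac cN)=O(1/N)$, uniformly in $N$. Thus $|D_u g_s|\le c\,\mathbf 1_{u\le s}$ almost surely, uniformly in $N$, and likewise for $D_v g_t$, so $|\E[D_u g_s D_v g_t]|\le c\,\mathbf 1_{u\le s}\mathbf 1_{v\le t}$. Consequently the integrand in \eqref{eq: Dg >} is dominated by the $N$-independent integrable function $c\,\phi(u,v)\phi(s,t)$, giving the domination and the finiteness of $\E[\|D_\cdot g_\cdot\|^2_{\mathcal H\otimes\mathcal H}]$.

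The main obstacle is the uniform-in-$N$ control of the Lions-derivative (measure) term in $D_u^i g(X^{i,N}_s,\mu^N_s)$: a priori the normalized sum $\frac1N\sum_k$ could fail to be bounded since each summand $D_u^i X^{k,N}_s$ is accumulated over $k$. The resolution is that this term is genuinely $O(1/N)$, which rests exactly on the off-diagonal estimate $|D_u^i X^{k,N}_s|\le c/N$ for $k\neq i$ from Lemma \ref{l: deriv-bds}; this is the sole point where the interaction enters, and it is what keeps every constant independent of $N$. All remaining difficulty reduces to the integrability of $\phi$, for which the standing hypothesis $H>\frac12$ is essential.
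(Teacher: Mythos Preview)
Your proof is correct and follows essentially the same approach as the paper's: polynomial growth of $g$ plus Lemma~\ref{l: moments} and Cauchy--Schwarz for \eqref{eq: g >}, and the chain rule together with Assumption~\ref{as: deriv}($g$) and the Malliavin-derivative bounds of Lemma~\ref{l: deriv-bds} for \eqref{eq: Dg >}, both combined with the integrability of $\phi$ for $H>\tfrac12$. You are slightly more explicit than the paper in isolating the off-diagonal contribution $\frac1N\sum_{k\neq i}|D_u^i X^{k,N}_s|\le c/N$ to justify the uniform-in-$N$ bound on the Lions-derivative term, which is a helpful clarification but not a different argument.
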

\begin{corollary}\label{cor: L2-bound}
{\change Grant Assumptions \ref{as: lip}, \ref{as: bound}, \ref{as: moments}, and \ref{as: deriv}(b) and} let $H \ge \frac{1}{2}$. Then $\int_0^T b(\bar{X}^{i}_r, \bar{\mu}_r)\sigma dB^{H,i}_r$ as well as $\int_0^T b({X}^{i,N}_r, {\mu}^{N}_r)\sigma dB^{H,i}_r$ are well defined as $L^2$-random vectors. 
\end{corollary}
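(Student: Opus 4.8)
The plan is to deduce the corollary directly from Meyer's inequality \eqref{eq: Meyer} combined with Lemma \ref{l: L2-bound-loclip}. Since $b=(b_1,\dots,b_p)$ is vector-valued and the divergence integral is taken componentwise, it suffices to show that each scalar integral $\delta^i\big(b_m(X^{i,N}_\cdot,\mu^N_\cdot)\sigma\big)$, and likewise $\delta^i\big(b_m(\bar X^i_\cdot,\bar\mu_\cdot)\sigma\big)$, lies in $L^2(\Omega)$ for every $m=1,\dots,p$. Viewing the integrand $u_m:=b_m(X^{i,N}_\cdot,\mu^N_\cdot)\sigma$ as the element of $\mathcal H^N$ supported in the $i$-th coordinate, one has $\|u_m\|_{\mathcal H^N}=\|b_m(\cdot)\sigma\|_{\mathcal H}$ and $\delta(u_m)=\delta^i(b_m(\cdot)\sigma)$, so Meyer's inequality \eqref{eq: Meyer} with $p=2$ gives
\begin{equation*}
\E\big[|\delta^i(u_m)|^2\big]\le C_2\Big(\E\big[\|u_m\|^2_{\mathcal H}\big]+\E\big[\|D^i_\cdot u_m\|^2_{\mathcal H\otimes\mathcal H}\big]\Big).
\end{equation*}
Thus it is enough to check $u_m\in\mathbb D^{1,2}(\mathcal H)$; the constant $\sigma$ merely factors out of both seminorms, and Malliavin differentiability of the integrand along its time variable is already provided by Lemma \ref{l: deriv-bds} together with the chain rule \eqref{eq: DFx}.

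For $H\in(\tfrac12,1)$ I would apply Lemma \ref{l: L2-bound-loclip} with $g=b_m$. Its hypotheses transfer immediately to each component: Assumption \ref{as: lip} makes $b_m$ globally Lipschitz, hence locally Lipschitz in the sense of \eqref{eq: def loc lip} (the case $k=l=0$); Assumption \ref{as: deriv}(b), which is granted, is precisely Assumption \ref{as: deriv}($b_m$); and Assumption \ref{as: bound} gives $|b_m(0,\delta_0)|\le c<\infty$. The lemma then guarantees that both $\E[\|b_m(\cdot)\|^2_{\mathcal H}]$ and $\E[\|D^i_\cdot b_m(\cdot)\|^2_{\mathcal H\otimes\mathcal H}]$ are finite, and it is stated for $b_m$ evaluated along either the interacting process $(X^{i,N},\mu^N)$ or the independent one $(\bar X^i,\bar\mu)$. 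Substituting into the Meyer bound above yields the claim for $H>\tfrac12$.

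The boundary case $H=\tfrac12$ is not covered by Lemma \ref{l: L2-bound-loclip} (there the kernel $\phi$ degenerates and $\mathcal H=L^2([0,T])$ carries the usual inner product), but it is classical. In this case $B^{H,i}$ is a standard Brownian motion and $r\mapsto b_m(X^{i,N}_r,\mu^N_r)$ is adapted, so the Skorohod integral coincides with the Itô integral; the latter is a well-defined $L^2$-random variable because the linear growth \eqref{eq: linear b} and the moment bounds of Lemma \ref{l: moments}(1) give $\E\big[\int_0^T b_m^2(X^{i,N}_r,\mu^N_r)\,dr\big]<\infty$, and identically for $(\bar X^i,\bar\mu)$.

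In effect this corollary is a packaging statement, and I do not expect a genuine obstacle in its proof: essentially all the analytic effort sits inside Lemma \ref{l: L2-bound-loclip}, whose proof must tame the near-diagonal singularity of $\phi(t,s)=H(2H-1)|t-s|^{2H-2}$ in the double and quadruple integrals \eqref{eq: g >}–\eqref{eq: Dg >}, the range $H>\tfrac12$ (or $H>\tfrac14$, cf.\ Remark \ref{rk: H>1/4}) being exactly what secures integrability of these kernels. The only point meriting care here is verifying that the hypotheses of that lemma hold for each $b_m$, which, as noted, follows at once from Assumptions \ref{as: lip}, \ref{as: bound} and \ref{as: deriv}(b).
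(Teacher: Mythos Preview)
Your proposal is correct and follows essentially the same route as the paper: for $H\in(\tfrac12,1)$ one combines Lemma \ref{l: L2-bound-loclip} with the continuity of $\delta$ on $\mathbb{D}^{1,2}(\mathcal H)$ (the paper cites Theorem 2.5.5 in \cite{NPBook}, which is the same mechanism as Meyer's inequality \eqref{eq: Meyer} at $p=2$), and for $H=\tfrac12$ one invokes It\^o isometry. Your explicit verification that each $b_m$ satisfies the hypotheses of Lemma \ref{l: L2-bound-loclip} is a useful addition.
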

\begin{proof}
    The case $H\in (\frac{1}{2},1)$ is a direct consequence of Lemma \ref{l: L2-bound-loclip} and Theorem 2.5.5 in \cite{NPBook}. The case $H=\frac{1}{2}$ follows by It\^o isometry.
\end{proof}

To conclude this section, we present some bounds on the derivative with respect to the initial condition, the proof of which can be found in Section \ref{s: proof technical}. These bounds are crucial for establishing the asymptotic results of the estimator defined in \eqref{eq: def estimator increments}. Since the derivatives with respect to the initial conditions behave similarly to the Malliavin derivatives, the resulting bounds are to be expected.

\begin{lemma}{\label{l: x0}}
Grant Assumptions \ref{as: lip}, \ref{as: bound}, \ref{as: moments}, and \ref{as: deriv}(b). Then, for each $i,j \in \{1, ... , N \}$ and for any $t \in [0, T]$, the following hold true.
\begin{enumerate}
    \item $|\partial_{x_0^j} X_t^{i,x_0^j}| \le c(1_{\{i = j\}} + \frac{1}{N})$, 
    \item Assume moreover Assumption \ref{as: second derivatives}. Then, for any $\tau \in [0, 1]$, $|\partial^2_{x_0^j} X_t^{i,x_0^j + \tau}| \le c(1_{\{i = j\}} + \frac{1}{N})$.
\end{enumerate} 
\end{lemma}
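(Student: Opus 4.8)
The plan is to exploit the structural identity between the first-variation system \eqref{eq: dx0 X 7} and the Malliavin system \eqref{eq: DsX}. Fixing $j$ and writing $Y_t^i := \partial_{x_0^j} X_t^{i,x_0^j}$, I observe that $(Y_t^i)_i$ solves a linear system with exactly the same homogeneous operator as $(D_s^j X_t^{i,N})_i$ — diagonal coefficient $\langle\theta^0,\partial_x b(X_r^i,\mu_r^N)\rangle$ and off-diagonal coefficient $\frac1N\langle\theta^0,(\partial_\mu b)(X_r^i,\mu_r^N)(X_r^k)\rangle$ — the only difference being that the inhomogeneity is the initial datum $1_{\{i=j\}}$ rather than $\sigma 1_{\{i=j\}}$. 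By Assumption~\ref{as: deriv}(b) these coefficients are bounded uniformly in $r$ and $N$ by constants $A,C$, so Point~1 follows by the same two-step Gronwall argument as in Lemma~\ref{l: deriv-bds}, Point~2. First I would sum the scalar inequality $|Y_t^i|\le 1_{\{i=j\}}+\int_0^t(A|Y_r^i|+\frac CN\sum_k|Y_r^k|)\,dr$ over $i$ to obtain $\Sigma_t:=\sum_i|Y_t^i|\le 1+(A+C)\int_0^t\Sigma_r\,dr$, hence $\Sigma_t\le e^{(A+C)T}=:c_0$; then, inserting $\frac1N\sum_k|Y_r^k|\le c_0/N$ back into the scalar inequality and applying Gronwall once more gives $|Y_t^i|\le c/N$ for $i\neq j$ and $|Y_t^j|\le c$, which is the claim.

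For Point~2, I would differentiate \eqref{eq: dx0 X 7} a second time in $x_0^j$, starting particle $j$ at $x_0^j+\tau$; since the shift $\tau\in[0,1]$ does not enter any of the uniform bounds on the derivatives of $b$, all estimates will be uniform in $\tau$ (Point~1 itself holds at an arbitrary initial point). Writing $Z_t^i:=\partial^2_{x_0^j}X_t^{i,x_0^j+\tau}$ and carrying out the differentiation with the chain rule for the $L$-derivative through the empirical measure (Proposition~\ref{prop: deriv empirical measure}), Assumption~\ref{as: second derivatives} guarantees that the emerging second derivatives $\partial_{xx}b,\partial_{x\mu}b,\partial_{v\mu}b,\partial_{\mu\mu}b$ are bounded. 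The outcome is a linear integral equation $Z_t^i=\int_0^t(A_rZ_r^i+\frac1N\sum_k C_r^{ik}Z_r^k+R_r^i)\,dr$ with the \emph{same} homogeneous operator as for $(Y_t^i)_i$ and a source $R_r^i$ consisting of products $Y_r^{i_1}Y_r^{i_2}$ of two first-variation derivatives weighted by bounded second derivatives of $b$.

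The whole argument then closes by the same two-step Gronwall scheme, provided I establish the two source estimates $\sum_i|R_r^i|\le c$ and $|R_r^i|\le c(1_{\{i=j\}}+\frac1N)$, uniformly in $r,N$. These I would deduce from Point~1 via the elementary bounds $\frac1N\sum_k|Y_r^k|\le c_0/N$ and, crucially, $\frac1N\sum_k(Y_r^k)^2\le\frac1N(\max_k|Y_r^k|)\sum_k|Y_r^k|\le c\,c_0/N$, the latter being of size $1/N$ precisely because the sum is dominated by the single diagonal index $k=j$ at which $|Y_r^j|=O(1)$. Granting these, summing over $i$ and applying Gronwall gives $\sum_i|Z_t^i|\le c$, and reinserting $\frac1N\sum_k|Z_r^k|\le c/N$ into the scalar inequality yields $|Z_t^i|\le c/N$ for $i\neq j$ and $|Z_t^j|\le c$.

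I expect the only delicate point to be the second differentiation itself: correctly decomposing $\partial_{x_0^j}[(\partial_\mu b)(X_r^i,\mu_r^N)(X_r^k)]$ into its $x$-, $v$- and measure-contributions through the Lions calculus, and then tracking orders so as to confirm that the off-diagonal source stays at size $1/N$ (rather than $O(1)$) — which is exactly the estimate $\frac1N\sum_k(Y_r^k)^2=O(1/N)$ and is what forces the off-diagonal rate $c/N$ in the conclusion.
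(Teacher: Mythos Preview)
Your proposal is correct and follows essentially the same approach as the paper's proof: both arguments use the identical two-step Gronwall scheme (first sum/average over $i$ to bound the aggregate, then reinsert to get the pointwise bound), and both identify the source in the second-variation equation as quadratic in the first derivatives, controlled via Point~1. Your explicit flagging of the estimate $\frac{1}{N}\sum_k (Y_r^k)^2 = O(1/N)$ and of the Lions-calculus bookkeeping in the second differentiation is slightly more careful than the paper's somewhat condensed presentation of the source terms, but the substance is the same.
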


\section{Proof main results}{\label{s: proof main}}
This section is devoted to proving the propagation of chaos for the particles and their derivatives, as established in Theorem \ref{th: poc} and Theorem \ref{cor: PoC-deriv}/Corollary \ref{cor: Df}, respectively, as well as the statistical main results stated in Section \ref{s: main}.


\subsection{About the propagation of chaos: proofs}
We begin by proving the propagation of chaos as described in Theorem \ref{th: poc}. It follows Sznitman’s direct approach \cite{79imp}, which is based on comparing the dynamics of interacting particles with their limiting counterparts. 
\subsubsection{Proof of Theorem \ref{th: poc}}
\begin{proof}
From Equations \eqref{eq: model} and \eqref{eq: McK}, we can write, for any \(i \in \{1, \dots, N\}\),  
\[
\E\left[\sup_{t \in [0, T]} |X_t^{i,N} - \bar{X}_t^i|^q\right] \leq c T^{q-1} \int_0^T \E\left[\|b(X_t^{i,N}, \mu_t^N) - b(\bar{X}_t^i, \bar{\mu}_t)\|^q\right] dt,
\]
which, by the Lipschitz continuity of \(b\) from Assumption \ref{as: lip}, leads to
\[
\leq c T^{q-1} \int_0^T \left(\E[|X_t^{i,N} - \bar{X}_t^i|^q] + \E[W_2^q(\mu_t^N, \bar{\mu}_t)]\right) dt.
\]
To apply Grönwall’s lemma, the first term is already in the desired form, but we need to further analyze the Wasserstein distance. To that end, we introduce the empirical measure over the independent particle system as:
\[
\bar{\mu}_t^N := \frac{1}{N} \sum_{j=1}^N \delta_{\bar{X}_t^j}.
\]
Clearly, we have:
\[
\E[W_2^q(\mu_t^N, \bar{\mu}_t)] \leq c \E[W_2^q(\mu_t^N, \bar{\mu}_t^N)] + c \E[W_2^q(\bar{\mu}_t^N, \bar{\mu}_t)].
\]
For the first term, since \(q \geq 2\), Jensen’s inequality gives:
\[
W_2^q(\mu_t^N, \bar{\mu}_t^N) \leq \left(\frac{1}{N} \sum_{j=1}^N |X_t^{j,N} - \bar{X}_t^j|^2\right)^\frac{q}{2} \leq \frac{1}{N} \sum_{j=1}^N |X_t^{j,N} - \bar{X}_t^j|^q.
\]
Thus, it follows that:
\[
\E[W_2^q(\mu_t^N, \bar{\mu}_t^N)] \leq \frac{1}{N} \sum_{j=1}^N \E[|X_t^{j,N} - \bar{X}_t^j|^q] = \E[|X_t^{i,N} - \bar{X}_t^i|^q]
\]
for any \(i \in \{1, \dots, N\}\), by symmetry. 

Next, we turn to the analysis of \(\E[W_2^q(\bar{\mu}_t^N, \bar{\mu}_t)]\). By Theorem 1 in \cite{FouGui} (for \(q = 2\) and \(d = 1\) in our case), we have:
\[
\E[W_2^q(\bar{\mu}_t^N, \bar{\mu}_t)] \leq c N^{-\frac{1}{2}} + c N^{-\frac{2-q}{2}} \leq c N^{-\frac{1}{2}},
\]
where the last inequality follows from \(q \geq 2\).

Combining everything, we obtain:
\begin{align*}
\E\left[\sup_{t \in [0, T]} |X_t^{i,N} - \bar{X}_t^i|^q\right] & \leq c \int_0^T \E\left[|X_t^{i,N} - \bar{X}_t^i|^q\right] dt + c N^{-\frac{1}{2}}, \\
& \leq c \int_0^T \E\left[ \sup_{s \le t}|X_s^{i,N} - \bar{X}_s^i|^q\right] dt + c N^{-\frac{1}{2}}
\end{align*}
which, after applying Grönwall’s lemma and incorporating \(T\) into the constant \(c\) (as we are working over a fixed time horizon), completes the proof. 

The proof of \( \E \left[ \sup_{t \in [0, T]} W_2^q(\mu_t^N, \bar{\mu}_t) \right] \leq c \left( \frac{1}{\sqrt{N}} \right) \) naturally follows as a direct consequence of the arguments presented earlier.

\end{proof}

The same approach used to prove the propagation of chaos earlier can be applied to establish the propagation of chaos for the Malliavin derivatives as well. Specifically, our method focuses on directly bounding the difference between the Malliavin derivatives of the interacting particle system and those of the independent particle system. The Malliavin derivatives for both types of particles are computed in detail in the proof of Point 1 of Lemma \ref{l: deriv-bds}.

\subsubsection{Proof of Theorem \ref{cor: PoC-deriv}}
\begin{proof}
Point 1. The proof will focus on the case $t > s$, as otherwise the Malliavin derivatives are simply zero. We begin by proving the almost surely propagation of chaos in the case where \( j \neq i \). From Point 1 of Lemma \ref{l: deriv-bds}, we know that \( D_s^j\bar{X}^i_t = 0 \) in this case, so that
\[
\sup_{t \in [s, T]} |D_s^j{X}^{i,N}_t - D_s^j\bar{X}^i_t| = \sup_{t \in [s, T]} |D_s^j{X}^{i,N}_t|.
\]
Then, the second point of Lemma \ref{l: deriv-bds} concludes the proof.

Point 2. Let us now consider the case where \( i = j \). From the dynamics of the Malliavin derivatives for both the independent and interacting particle systems, we have for any $t \in [s, T]$
\begin{align*}
D_s^i{X}^{i,N}_t - D_s^i\bar{X}^i_t & = \int_s^t \langle\theta^0,\,  \left( \partial_x b({X}^{i,N}_r, {\mu}^N_r) D_s^i {X}^{i,N}_r - \partial_x b(\bar{X}^{i}_r, \bar{\mu}_r) D_s^i \bar{X}^i_r \right)\rangle dr \\
& + \int_s^t ( \langle\theta^0,\, \frac{1}{N} \sum_{k = 1, k \neq i}^N (\partial_\mu b)({X}^{i,N}_r, \mu^N_r)(X^{k,N}_r) D_s^i X^k_r + \frac{1}{N} (\partial_\mu b)({X}^{i,N}_r, \mu^N_r)(X^{i,N}_r) D_s^i X^i_r)\rangle \, dr.
\end{align*}
{\rev Thanks to Point~2 of Lemma~\ref{l: deriv-bds}, we have 
\(|D_s^i X_r^k| \le \frac{c}{N}\) for \(k \neq i\) and \(|D_s^i X_r^i| \le c\).
Moreover, the term \(D_s^i X_r^i\) is multiplied by an additional factor \(\frac{1}{N}\), 
so that both contributions are of order \(\frac{1}{N}\).
Using the boundedness of \(\partial_\mu b\), we then deduce that the second integral above is bounded by 
\(c\,\frac{|t-s|}{N}\).
It follows that}
\[
|D_s^i{X}^{i,N}_t - D_s^i\bar{X}^i_t|^q \leq c \left( \int_s^t \langle\theta^0,\, \left( \partial_x b({X}^{i,N}_r, {\mu}^N_r) D_s^i {X}^{i,N}_r - \partial_x b(\bar{X}^{i}_r, \bar{\mu}_r) D_s^i \bar{X}^i_r \right)\rangle dr \right)^q + \frac{c T^q}{N^q},
\]
where the constant \( c \) is uniform in \( N \). This yields the inequality
\begin{equation}{\label{eq: A}}
\E[\sup_{t \in [s, T]}|D_s^i{X}^{i,N}_t - D_s^i\bar{X}^i_t|^q] \leq A + \frac{c}{N^q},
\end{equation}
where \( A = A_1 + A_2 \), and
\[
A_1 := c T^{q-1} \int_s^T \E[(\langle\theta^0,\, \partial_x b({X}^{i,N}_r, {\mu}^N_r) D_s^i {X}^{i,N}_r - \partial_x b(\bar{X}^{i}_r, \bar{\mu}_r) D_s^i {X}^{i,N}_r\rangle)^q] dr,
\]
\[
A_2 := c T^{q-1} \int_s^T \E[(\langle\theta^0,\, \partial_x b(\bar{X}^{i}_r, \bar{\mu}_r) D_s^i {X}^{i,N}_r - \partial_x b(\bar{X}^{i}_r, \bar{\mu}_r) D_s^i \bar{X}^i_r \rangle)^q] dr.
\]
For \( A_1 \), we apply the Lipschitz continuity of \( \partial_x b \), to obtain
\[
|\langle\theta^0,\, \partial_x b({X}^{i,N}_r, {\mu}^N_r) - \partial_x b(\bar{X}^{i}_r, \bar{\mu}_r)\rangle| |D_s^i {X}^{i,N}_r| \leq c (|{X}^{i,N}_r - \bar{X}^{i}_r| + W_2({\mu}^N_r, \bar{\mu}_r)) |D_s^i {X}^{i,N}_r|.
\]
The uniform boundedness of \( D_s^i {X}^{i,N}_r \), as shown in Point 2 of Lemma \ref{l: deriv-bds}, together with the propagation of chaos in Theorem \ref{th: poc}, implies
\begin{equation}{\label{eq: bound A1}}
A_1 \leq c \sup_{r\in [s,T]}\E[|{X}^{i,N}_r - \bar{X}^{i}_r|^q + W_2^q({\mu}^N_r, \bar{\mu}_r)] \leq c N^{-\frac{1}{2}}.
\end{equation}
Now let us consider \( A_2 \). Since \( \|\partial_x b \|_\infty < c \), this term is already in the right form to apply Gronwall's lemma:
\begin{equation}{\label{eq: bound A2}}
A_2 \leq c T^{q-1} \int_s^T \E[|D_s^i{X}^{i,N}_r - D_s^i \bar{X}^i_r|^q] dr.
\end{equation}
Combining \eqref{eq: A}, \eqref{eq: bound A1}, and \eqref{eq: bound A2}, we get
\begin{align*}
\E[\sup_{t \in [s, T]}|D_s^i{X}^{i,N}_t - D_s^i \bar{X}^i_t|^q] & \leq \frac{c}{N^q} + \frac{c}{N^{\frac{1}{2}}} + c T^{q-1} \int_s^T \E[|D_s^i{X}^{i,N}_r - D_s^i \bar{X}^i_r|^q] dr \\
& \leq \frac{c}{N^\frac{1}{2}} + c T^{q-1} \int_s^T \E[\sup_{u \in [s, r]}|D_s^i{X}^{i,N}_u - D_s^i \bar{X}^i_u|^q] dr.
\end{align*}
It concludes the proof via an application of Gronwall's lemma.

\end{proof}

From Theorem \ref{cor: PoC-deriv}, it is straightforward to derive Corollary \ref{cor: Df}. The proof relies on a formal computation of the Malliavin derivatives, combined with bounds similar in nature to those used in the previous proof.

\subsubsection{Proof of Corollary \ref{cor: Df}}
\begin{proof}
Point 1. From \eqref{eq: Df barX} we know that, for $j \neq i$, $D_s^j f (\bar{X}_t^i, \bar{\mu}_t) = 0$, so that the result follows from the boundedness of the derivatives of $f$, as in Assumption \ref{as: deriv}($f$), and Point 2 of Lemma \ref{l: deriv-bds}. 

Point 2. In the following, in order to lighten the notation, we will write simply $D_s$ for $D_s^i$. It follows clearly from Assumption \ref{as: deriv}($f$) and Point 2 of Lemma \ref{l: deriv-bds} that, for any \( t,s \in [0, T] \) and any \( q \geq 2 \),
\begin{align*}
&\mathbb{E}[\sup_{t \in [s, T]}|D_s f(X^{i,N}_t, \mu_t^N) - D_s f(\bar{X}^i_t, \bar{\mu}_t)|^q] \\
& \leq \mathbb{E}[\sup_{t \in [s, T]}|\partial_x f(X^{i,N}_t, \mu_t^N) D_s X^{i,N}_t - \partial_x f(\bar{X}^i_t, \bar{\mu}_t) D_s \bar{X}^i_t|^q] + \frac{c}{N^q} \\
& =: \tilde{A} + \frac{c}{N^q}.
\end{align*}
We recognize the same pattern as in \eqref{eq: A}, and thus, exactly as for \( A \), we split \( \tilde{A} \) into \( \tilde{A}_1 \) and \( \tilde{A}_2 \), where:
\[
\tilde{A}_1 \leq c \mathbb{E}[\sup_{t \in [s, T]}|\partial_x f(X^{i,N}_t, \mu_t^N) - \partial_x f(\bar{X}^i_t, \bar{\mu}_t)|^q | D_s X^{i,N}_t|^q] \leq c \frac{1}{N^\frac{1}{2}},
\]
due to the boundedness of \( D X^{i,N}_t \), the Lipschitz continuity of \( \partial_x f \), and the propagation of chaos from Theorem \ref{th: poc}. Moreover:
\[
\tilde{A}_2 \leq c \mathbb{E}[\sup_{t \in [s, T]}|D_s X^{i,N}_t - D_s \bar{X}^i_t|^q] \leq \frac{c}{N^\frac{1}{2}},
\]
utilizing the boundedness of \( \partial_x f \) and the propagation of chaos for Malliavin derivatives from Proposition \ref{cor: PoC-deriv}. Therefore, the proof is concluded.

\end{proof}


\subsection{About the asymptotic properties of the fake-stimator $\tilde{\theta}_N$: proofs}
We now proceed with the proof of the asymptotic properties of \(\tilde{\theta}_N\). Specifically, we begin by establishing the consistency stated in Theorem \ref{th: consistency}, along with the two supporting propositions upon which it is based.

\subsubsection{Proof of Theorem \ref{th: consistency}}
\begin{proof}
By \eqref{eq: est for consistency}, the consistency of $\tilde{\theta}_N$ follows directly from Propositions \ref{prop: denominator} and \ref{prop: conv num}, combined with the straightforward observation that convergence in \( L^2 \) implies convergence in probability.
\end{proof}

\subsubsection{Proof of Proposition \ref{prop: denominator}}
\begin{proof}
To establish the desired convergence, we first show that, as $N \rightarrow \infty$,
\[
\frac{1}{N} \sum_{i=1}^N \int_0^T g(X_s^{i,N}, \mu_s^N) \, ds - \frac{1}{N} \sum_{i=1}^N \int_0^T g(\bar{X}_s^i, \bar{\mu}_s) \, ds \xrightarrow{L^1} 0.
\]
From here, the proof follows by invoking the law of large numbers, that is applicable as we have finite second moments. It guarantees that:
\[
\frac{1}{N} \sum_{i=1}^N \int_0^T g(\bar{X}_s^i, \bar{\mu}_s) \, ds \rightarrow \int_0^T \E[g(\bar{X}_s^1, \bar{\mu}_s)] \, ds.
\]
Thus, it suffices to prove that:
\[
\int_0^T \E[|g(X_s^{i,N}, \mu_s^N) - g(\bar{X}_s^i, \bar{\mu}_s)|] \, ds \rightarrow 0,
\]
where \( i \) is fixed and the integral is taken over a bounded domain.
Since \( g \) is locally Lipschitz, we can apply the following bound:
\begin{align*}
&\E[|g(X_s^{i,N}, \mu_s^N) - g(\bar{X}_s^i, \bar{\mu}_s)|] \\
& \leq c\E\left[ \left( |X_s^{i,N} - \bar{X}_s^i| + W_2(\mu_s^N, \bar{\mu}_s) \right) \left( 1 + |X_s^{i,N}|^k + |\bar{X}_s^i|^k + W_2^l(\mu_s^N, \delta_0) + W_2^l(\bar{\mu}_s, \delta_0) \right)  \right].
\end{align*}
By using Cauchy-Schwarz inequality and the first point of Lemma \ref{l: moments}, we obtain:
\[
\E[|g(X_s^{i,N}, \mu_s^N) - g(\bar{X}_s^i, \bar{\mu}_s)|] \leq c \E[|X_s^{i,N} - \bar{X}_s^i|^2]^{1/2} + c \E[W_2^2(\mu_s^N, \bar{\mu}_s)]^{1/2}.
\]
The proof is then concluded by applying the propagation of chaos result in Theorem \ref{th: poc}.
\end{proof}

As demonstrated in the proof above, the convergence in probability of $\Psi_N$ follows from a fairly standard application of the propagation of chaos. The convergence in \(L^2\) of the numerator, gathered in Proposition \ref{prop: conv num}, relies heavily on \eqref{eq: Meyer}, which is a direct consequence of Meyer’s inequality.

\subsubsection{Proof of Proposition \ref{prop: conv num}}
\begin{proof}
  We show the statement coordinatewise. For $m=1,\dots ,p$ we write
    $$ \frac{1}{N}Z^N_m = \frac{1}{N}\bar{Z}^N_m + \frac{1}{N}(Z^N_m-\bar{Z}^N_m)=:S_1+S_2.$$
    As for $S_1$, we have due to $\bar Z^{i,N}$ being i.i.d.
    \begin{align*}
        \E[S_1^2] = \frac{1}{N}\E [(\bar Z^{1,N}_m)^2],
    \end{align*}
   since Skorokhod integrals are zero-mean. We know from Corollary \ref{cor: L2-bound} that $\E[ (\bar Z^{1,N}_m)^2]<\infty$, and hence, $ \E[S_1^2]\to 0$. For $S_2$ we need to evaluate

    \begin{align*}
        &\E \left(\frac{1}{N}\sum_{i=1}^N (Z^{i,N}_m-\bar Z^{i,N}_m)\right)^2 =\frac{1}{N^2} \sum_{i, j=1}^N \E [(Z^{i,N}_m-\bar Z^{i,N}_m)(Z^{j,N}_m-\bar Z^{j,N}_m)]\\
        &\quad \leq \frac{1}{N^2}\sum_{i, j=1}^N \|Z^{i,N}_m-\bar Z^{i,N}_m\|_{L^2(\Omega)}\|Z^{j,N}_m-\bar Z^{j,N}_m\|_{L^2(\Omega)} = \|Z^{1,N}_m-\bar Z^{1,N}_m\|^2_{L^2(\Omega)},
    \end{align*}
    because, thanks to the symmetry of the definition, all $Z^{i,N}_m-\bar Z^{i,N}_m$ are identically distributed.

    Thus, it suffices to show that
    $$\E \left(\int_0^T [b_m(X_t^{1,N}, \mu_t^N) - b_m(\bar{X}_t^1, \bar{\mu}_t)] \sigma dB^{1,H}_t\right)^2 $$
    tends to zero as $N\to \infty$. The case \( H = \frac{1}{2} \) is straightforward due to the Itô isometry, the Lipschitz continuity of \( b \) as stated in Assumption \ref{as: lip}, and the propagation of chaos from Theorem \ref{th: poc}. Let us now turn to the case where \( H > \frac{1}{2} \).
    For this we can use the inequality \eqref{eq: Meyer} for the one-dimensional integral $\delta^1$ and bound the two summands on its right hand side. We have
    \begin{align*}
        &\lim_{N\to\infty} \E [ \|b_m(X_t^{1,N}, \mu_t^N) - b_m(\bar{X}_t^1, \bar{\mu}_t) \|_{\mathcal H}^2]\\
        &\quad = \lim_{N\to\infty} \int_0^T\int_0^T \E [(b_m(X_u^{1,N}, \mu_u^N) - b_m(\bar{X}_u^1, \bar{\mu}_u))(b_m(X_v^{1,N}, \mu_v^N) - b_m(\bar{X}_v^1, \bar{\mu}_v))] \phi(u,v) du\,dv.
    \end{align*}
    The uniform integrability in the proof of Lemma \ref{l: L2-bound-loclip} allows us to use dominated convergence theorem and swap the limit and the integrals, and Assumption \ref{as: lip} together with the propagation of chaos in Theorem \ref{th: poc} yields convergence to zero. Similarly, for the second summand in the right hand side of \eqref{eq: Meyer} we have
        \begin{align*}
        &\lim_{N\to\infty} \E [ \|D_\cdot(b_m(X_t^{1,N}, \mu_t^N) - b_m(\bar{X}_t^1, \bar{\mu}_t)) \|_{\mathcal H\otimes \mathcal H}^2]\\
        &\quad = \lim_{N\to\infty} \int_0^T\int_0^T \int_0^T\int_0^T \E [D_u(b_m(X_s^{1,N}, \mu_s^N) - b_m(\bar{X}_s^1, \bar{\mu}_s))D_v(b_m(X_t^{1,N}, \mu_t^N) - b_m(\bar{X}_t^1, \bar{\mu}_t))] \\
        &\quad \quad \times \phi(u,v) \phi(t,s)du\,dv\,dt \,ds.
    \end{align*}
   Here we wrote $D$ for $D^1$. We can again use the uniform integrability in the proof of Lemma \ref{l: L2-bound-loclip} to bring the limit inside the integral. {Finally, we can write by Cauchy-Schwarz
   \begin{align*}
      &\E [D_u(b_m(X_s^{1,N}, \mu_s^N) - b_m(\bar{X}_s^1, \bar{\mu}_s))D_v(b_m(X_t^{1,N}, \mu_t^N) - b_m(\bar{X}_t^1, \bar{\mu}_t))] \\
      &\: \leq \sqrt{\E\left[\left|D_u(b_m(X_s^{1,N}, \mu_s^N) - b_m(\bar{X}_s^1, \bar{\mu}_s))\right|^2\right]}\sqrt{\E\left[\left|D_v(b_m(X_t^{1,N}, \mu_t^N) - b_m(\bar{X}_t^1, \bar{\mu}_t))\right|^2\right]}\leq c\frac{1}{\sqrt{N}}.
   \end{align*}
   The last bound, and with it the convergence to zero, follows by Corollary \ref{cor: Df}}.

\end{proof}

The three proofs presented above establish the first property of the fake-stimator \(\tilde{\theta}_N\), namely its consistency. We now turn our attention to proving its asymptotic gaussianity. The first step in this direction is to demonstrate that the numerator is asymptotically gaussian, as indicated by the fluctuation analysis detailed in Theorem \ref{th: fluctuations}.

\subsubsection{Proof of Theorem \ref{th: fluctuations}}
\begin{proof}
By the triangular inequality, we have  
\[
W_1\left(\frac{1}{\sqrt{N}} Z^N, Z\cdot \Sigma\right) \leq W_1\left(\frac{1}{\sqrt{N}} Z^N, \frac{1}{\sqrt{N}} \bar{Z}^N\right) + W_1\left(\frac{1}{\sqrt{N}} \bar{Z}^N, Z\cdot \Sigma\right),
\]  
where we recall that \(Z = \mathcal{N}(0,\operatorname{Id}_p)\). {\change It follows from the multivariate Berry–Esseen theorem for sums of i.i.d. random variables (see, e.g., \cite{KorShe} for a comprehensive historical review of the optimal constant in the Berry–Esseen inequality, and \cite{Rai}, Remark 1.6, for the multivariate extension) that}
\begin{equation}\label{eq: Berry Esseen}
W_1\left(\frac{1}{\sqrt{N}} \bar{Z}^N, Z\cdot \Sigma\right) \leq \frac{c}{\sqrt{N}}.
\end{equation}  
Introducing the notation \(F_N := \frac{1}{\sqrt{N}} Z^N\) and \(\bar{F}_N := \frac{1}{\sqrt{N}} \bar{Z}^N\), we now study the behavior of \(W_1(F_N, \bar{F}_N)\). From the definition of the Wasserstein distance, it is clear that  
\begin{equation}\label{eq: first bound W1}
W_1(F_N, \bar{F}_N) \leq \mathbb{E}[\|F_N - \bar{F}_N\|^2]^{\frac{1}{2}}.  
\end{equation}  
Next,  
\[
\mathbb{E}[\|F_N - \bar{F}_N\|^2] = \mathbb{E}[\|F_N\|^2] - 2 \mathbb{E}[\langle F_N,\, \bar{F}_N\rangle] + \mathbb{E}[\|\bar{F}_N\|^2].
\]  
We aim to show the following claims:  
\begin{equation}\label{eq: claim 1}
\mathbb{E}\bigl[\|F_N\|^2\bigr] \;=\; K \;+\; O\bigl(N^{-\tfrac14}\bigr).
\end{equation}  
\begin{equation}\label{eq: claim 2}
\mathbb{E}[\langle F_N,\, \bar{F}_N\rangle ] = K + O\bigl(N^{-\tfrac14}\bigr),    
\end{equation}  
\begin{equation}\label{eq: claim 3}
\mathbb{E}[\|\bar{F}_N\|^2] = K
\end{equation}  
where $K=\sum_{m=1}^p K_m$ with
\begin{align*}
    &K_m = \sigma ^2\int_0^T \int_0^T \mathbb{E}[b_m(\bar{X}_s^1, \bar{\mu}_s) b_m(\bar{X}_t^1, \bar{\mu}_t)] \phi(s,t) \, ds \, dt\\
    &\quad +\sigma ^2 \int_0^T \int_0^T \int_0^T \int_0^T \mathbb{E}\big[D^1_v b_m(\bar{X}_s^1, \bar{\mu}_s) D^1_u b_m(\bar{X}_t^1, \bar{\mu}_t)\big] \phi(v,s) \phi(u,t) \, dv \, du \, ds \, dt.
\end{align*}

The three claims together imply  
\[
\mathbb{E}[\|F_N - \bar{F}_N\|^2] = O\bigl(N^{-\tfrac14}\bigr).
\]  
Substituting this into \eqref{eq: first bound W1}, and combining it with \eqref{eq: Berry Esseen}, we obtain  
\[
W_1\left(\frac{1}{\sqrt{N}} Z^N, Z\cdot \Sigma\right) \leq c N^{-\frac{1}{8}} + c N^{-\frac{1}{2}} = c N^{-\frac{1}{8}},  
\]  
as desired.  

We now proceed to prove the claims \eqref{eq: claim 1}, \eqref{eq: claim 2}, and \eqref{eq: claim 3}. The approach for all of them is similar and heavily relies on a multidimensional isometry for divergence integrals, as described in Theorem 3.11.1 of \cite{Biagini}. Observe that this theorem is stated in \cite{Biagini} for the fractional Wick–Itô–Skorohod (fWIS) integral, which coincides with our divergence-type integral for \(H > \frac{1}{2}\) (see also Section 3.12 of \cite{Biagini} for further details).

Let us begin by proving \eqref{eq: claim 1}. Using the empirical projection as explained in Remark \ref{rk: projection}, we can write \(Z^N_m\) as \({\change \sigma}\int_0^T B^N_m({X}_s) \, d{B}_s^H\), where  
\begin{equation}{\label{eq: Bmn 40.5}}
B^N_m({x}) = (b^{1,N}_m({x}), \dots, b^{N,N}_m({x}))^T = \left(b_m(x^1, \mu^N), \dots, b_m(x^N, \mu^N)\right)^T, 
\end{equation}
and \({B}^H\) is an \(N\)-dimensional fractional Brownian motion. Consequently, as $\|Z^N\|^2 = \sum_{m = 1}^p \|Z_m^N\|^2$,
\[
\mathbb{E}[\|Z^N\|^2] = \mathbb{E}\left[\sum_{m=1}^p \left\|\int_0^T B_m^N({X}_s) \,  \sigma  d{B}_s^H\right\|^2\right].  
\]  
From the multidimensional isometry in Theorem 3.11.1 of \cite{Biagini}, we obtain  
\[
\mathbb{E}\left[\left\|\int_0^T B^N_m({X}_s) \sigma \, d{B}_s^H\right\|^2\right] =  \sigma ^2 \sum_{i = 1}^N \int_0^T \int_0^T \mathbb{E}[(B^N_m({X}_s))_i (B^N_m({X}_t))_i] \phi(s,t) \, ds \, dt 
\]  
\[
+  \sigma ^2\sum_{i,j = 1}^N \int_0^T \int_0^T \mathbb{E}\left[D_t^{\phi,i}(B^N_m({X}_s))_j \, D_s^{\phi,j}(B^N_m({X}_t))_i\right] ds \, dt,  
\]  
where we use the same notation as in \cite{Biagini}, and according to (3.31) therein,  
\[
D_t^{\phi, i} F := \int_0^T D^i_v F \, \phi(t,v) \, dv.
\]  
{\change By explicitly writing out the components of \( B_m^N \) as in \eqref{eq: Bmn 40.5}, we obtain the following:} 
\[
\mathbb{E}\left[\left\|\int_0^T B^N_m({X}_s)  \sigma \, d{B}_s^H\right\|^2\right] =  \sigma ^2\sum_{i = 1}^N \int_0^T \int_0^T \mathbb{E}[b_m(X_s^{i,N}, \mu_s^N) b_m(X_t^{i,N}, \mu_t^N)] \phi(s,t) \, ds \, dt  
\]  
\[
+  \sigma ^2\sum_{i = 1}^N \int_0^T \int_0^T \int_0^T \int_0^T \mathbb{E}[D^i_v b_m(X_s^{i,N}, \mu_s^N) D^i_u b_m(X_t^{i,N}, \mu_t^N)] \phi(v,s) \phi(u,t) \, dv \, du \, ds \, dt  
\]  
\[
+  \sigma ^2\sum_{i \neq j} \int_0^T \int_0^T \int_0^T \int_0^T \mathbb{E}[D^i_v b_m(X_s^{j,N}, \mu_s^N) D^j_u b_m(X_t^{i,N}, \mu_t^N)] \phi(v,s) \phi(u,t) \, dv \, du \, ds \, dt.  
\]  
We denote these three terms as \(I_1\), \(I_2\), and \(I_3\), respectively.  

Next, we study the convergence of \(\frac{1}{N}(I_1 + I_2 + I_3)\). By leveraging the propagation of chaos, we show that \(I_1\) and \(I_2\) contribute to the limit, while \(I_3\) is negligible. Following the proof of Proposition \ref{prop: denominator}, we first demonstrate that we can transition to independent particles using the propagation of chaos. Then, since the particles are identically distributed, the law of large numbers allows us to conclude the proof. Indeed,  
\begin{align*}
\frac{I_1}{N } &= \frac{1}{N}  \sigma ^2\sum_{i = 1}^N \int_0^T \int_0^T \mathbb{E}[(b_m(X_s^{i,N}, \mu_s^N) - b_m(\bar{X}_s^i, \bar{\mu}_s)) b_m(X_t^{i,N}, \mu_t^N)] \phi(s,t) \, ds \, dt \\
&\quad + \frac{1}{N} \sigma ^2\sum_{i = 1}^N \int_0^T \int_0^T \mathbb{E}[b_m(\bar{X}_s^i, \bar{\mu}_s)(b_m(X_t^{i,N}, \mu_t^N) - b_m(\bar{X}_t^i, \bar{\mu}_t))] \phi(s,t) \, ds \, dt \\
&\quad + \frac{1}{N} \sigma ^2 \sum_{i = 1}^N \int_0^T \int_0^T \mathbb{E}[b_m(\bar{X}_s^i, \bar{\mu}_s) b_m(\bar{X}_t^i, \bar{\mu}_t)] \phi(s,t) \, ds \, dt \\
&=: I_{1,1} + I_{1,2} + I_{1,3}.
\end{align*}
Let us start by analyzing \(I_{1,1}\) and \(I_{1,2}\). Using the Lipschitz continuity and linear growth of \(b\), along with the Cauchy-Schwarz inequality, we obtain the following bounds. By also applying Point 1 of Lemma \ref{l: moments} to control the moments of the processes \((X^{i,N}_t)_{t \in [0,T]}\) and \((\bar{X}^i_t)_{t \in [0,T]}\), we have  
\begin{align*}
|I_{1,1} + I_{1,2}| &\le \frac{c}{N} \sum_{i = 1}^N \int_0^T \int_0^T \big(\mathbb{E}[|X_s^{i,N} - \bar{X}_s^i|^2]^{\frac{1}{2}} + \mathbb{E}[W_2^2(\mu_s^N, \bar{\mu}_s)]^{\frac{1}{2}} \\
&\quad + \mathbb{E}[|X_t^{i,N} - \bar{X}_t^i|^2]^{\frac{1}{2}} + \mathbb{E}[W_2^2(\mu_t^N, \bar{\mu}_t)]^{\frac{1}{2}}\big) \phi(s,t) \, ds \, dt \le c{N}^{-\frac{1}{4}},
\end{align*}
as a direct consequence of the propagation of chaos in Theorem \ref{th: poc} and the integrability of \(\int_0^T \int_0^T \phi(s,t) \, ds \, dt\).  
Furthermore, using the fact that the particles are identically distributed, we obtain  
\[
\frac{I_1}{N} = I_{1,3} + O\left({N}^{-\frac{1}{4}}\right) = \sigma ^2\int_0^T \int_0^T \mathbb{E}[b_m(\bar{X}_s^1, \bar{\mu}_s) b_m(\bar{X}_t^1, \bar{\mu}_t)] \phi(s,t) \, ds \, dt + O\left({N}^{-\frac{1}{4}}\right).
\]

The analysis of \(I_2\) follows a similar approach. Specifically, we write:  
\begin{align*}
\frac{I_2}{N} &= \frac{\sigma ^2}{N} \sum_{i=1}^N \int_0^T \int_0^T \int_0^T \int_0^T \mathbb{E}\big[(D^i_v b_m(X_s^{i,N}, \mu_s^N) - D^i_v b_m(\bar{X}_s^i, \bar{\mu}_s)) D^i_u b_m(X_t^{i,N}, \mu_t^N)\big] \phi(v,s) \phi(u,t) \, dv \, du \, ds \, dt \\
&\quad + \frac{\sigma ^2}{N} \sum_{i=1}^N \int_0^T \int_0^T \int_0^T \int_0^T \mathbb{E}\big[D^i_v b_m(\bar{X}_s^i, \bar{\mu}_s) (D^i_u b_m(X_t^{i,N}, \mu_t^N) - D^i_u b_m(\bar{X}_t^i, \bar{\mu}_t))\big] \phi(v,s) \phi(u,t) \, dv \, du \, ds \, dt \\
&\quad + \frac{\sigma ^2}{N} \sum_{i=1}^N \int_0^T \int_0^T \int_0^T \int_0^T \mathbb{E}\big[D^i_v b_m(\bar{X}_s^i, \bar{\mu}_s) D^i_u b_m(\bar{X}_t^i, \bar{\mu}_t)\big] \phi(v,s) \phi(u,t) \, dv \, du \, ds \, dt \\
&=: I_{2,1} + I_{2,2} + I_{2,3}.
\end{align*}  
From the uniform boundedness of the Malliavin derivatives, as shown in Point 2 of Lemma \ref{l: deriv-bds}, and the propagation of chaos for Malliavin derivatives established in Corollary \ref{cor: Df}, we deduce:  
\[
|I_{2,1} + I_{2,2}| \le c{N}^{-\frac{1}{4}},  
\]  
where we also used the boundedness of the integrals.  
Finally, noting that the independent particles are identically distributed, we obtain:  
\[
\frac{I_2}{N} = \sigma ^2 \int_0^T \int_0^T \int_0^T \int_0^T \mathbb{E}\big[D^1_v b_m(\bar{X}_s^1, \bar{\mu}_s) D^1_u b_m(\bar{X}_t^1, \bar{\mu}_t)\big] \phi(v,s) \phi(u,t) \, dv \, du \, ds \, dt + O\left({N}^{-\frac{1}{4}}\right).
\]
The term \(\frac{I_3}{N}\) vanishes asymptotically. This follows directly from Corollary \ref{cor: Df}, which shows that \(|D^j_u b(X_t^{i,N}, \mu_t^N)| \le \frac{c}{N}\) for \(i \neq j\). Hence,  
\begin{align*}
\frac{I_3}{N} &\le \frac{\sigma ^2}{N} \sum_{i \neq j} \int_0^T \int_0^T \int_0^T \int_0^T \left(\frac{c}{N}\right)^2 \phi(v,s) \phi(u,t) \, dv \, du \, ds \, dt \le \frac{c}{N}.
\end{align*}
It follows that  
\begin{equation}{\label{eq: 34.5 end ZN}}
 \mathbb{E}[\|F_N\|^2] = \frac{1}{N} \mathbb{E}[\|Z^N\|^2] = \sum_{m=1}^p K_m + O\left({N}^{-\frac{1}{4}}\right) + O\left(\frac{1}{N}\right) = \sum_{m=1}^p K_m + O\left({N}^{-\frac{1}{4}}\right),   
\end{equation}
as claimed in \eqref{eq: claim 1}.

To prove \eqref{eq: claim 2}, we follow a similar route. Specifically, we express \(\bar{Z}^N_m\), $m=1,\dots ,p$, using the empirical projection:  
\[
\bar{Z}^N_m = \int_0^T \bar{B}_m^N({\bar{X}_s}) \sigma \, d{B}_s^H,
\]  
where \(\bar{B}^N_m({\bar{X}_s}) = \big(b_m(\bar{X}_s^1, \bar{\mu}_s), \dots, b_m(\bar{X}_s^N, \bar{\mu}_s)\big)^T\).  
Applying Theorem 3.11.1 in \cite{Biagini}, we obtain:  
\begin{align*}
\mathbb{E}[Z^N_m \bar{Z}^N_m] &= \mathbb{E}\Big[\Big\langle\int_0^T B^N_m({X}_s) \sigma \, d{B}_s^H,\, \int_0^T \bar{B}^N_m({\bar{X}_s}) \sigma \, d{B}_s^H\Big\rangle\Big] \\
&= \sigma ^2 \sum_{i=1}^N \int_0^T \int_0^T \mathbb{E}[b_m(X_s^{i,N}, \mu_s^N) b_m(\bar{X}_t^i, \bar{\mu}_t)] \phi(s,t) \, ds \, dt \\
&\quad + \sigma ^2\sum_{i, j=1}^N \int_0^T \int_0^T \int_0^T \int_0^T \mathbb{E}[D^i_v b_m(X_s^{j,N}, \mu_s^N) D^j_u b_m(\bar{X}_t^i, \bar{\mu}_t)] \phi(v,s) \phi(u,t) \, dv \, du \, ds \, dt.
\end{align*}  
Since \(D^j_u b_m(\bar{X}_t^i, \bar{\mu}_t) = 0\) for \(j \neq i\), the double sum simplifies to \(i = j\). Thus,  
\begin{align*}
\mathbb{E}[\langle Z^N,\, \bar{Z}^N\rangle ] &= \sigma ^2\sum_{m=1}^p \Big(\sum_{i=1}^N \int_0^T \int_0^T \mathbb{E}[b_m(X_s^{i,N}, \mu_s^N) b_m(\bar{X}_t^i, \bar{\mu}_t)] \phi(s,t) \, ds \, dt \\
&\quad + \sum_{i=1}^N \int_0^T \int_0^T \int_0^T \int_0^T \mathbb{E}[D^i_v b_m(X_s^{i,N}, \mu_s^N) D^i_u b_m(\bar{X}_t^i, \bar{\mu}_t)] \phi(v,s) \phi(u,t) \, dv \, du \, ds \, dt\Big).
\end{align*}  
By using the propagation of chaos, as outlined in Theorem \ref{th: poc} for the first term, and in Corollary \ref{cor: Df} for the second term, we conclude:  
\[
\mathbb{E}[\langle F_N ,\,\bar{F}_N\rangle ] = \frac{1}{N} \mathbb{E}[\langle Z^N,\, \bar{Z}^N\rangle ] = K + O\left({N}^{-\frac{1}{4}}\right),
\]  
as claimed in \eqref{eq: claim 2}.  

To finish, observe that:  
\[
\mathbb{E}[\|\bar{F}_N\|^2] = \frac{1}{N} \mathbb{E}[\|\bar{Z}^N\|^2] = K,
\]  
where we again applied Theorem 3.11.1 in \cite{Biagini} and \(D^j_u b_m(\bar{X}_t^i, \bar{\mu}_t) = 0\) for \(j \neq i\), as well as the fact that the independent particles are identically distributed.  
This concludes the proof of Theorem \ref{th: fluctuations}.

\end{proof}

The asymptotic gaussianity of the fake-stimator \(\tilde{\theta}_N\) follows directly from the theorem above, as detailed below.

\subsubsection{Proof of Theorem \ref{th: CLT fakestimator}}
\begin{proof}
From \eqref{eq: est for consistency}, we can express  
\[
\sqrt{N}(\tilde{\theta}_N - \theta^0) = (\frac{1}{N}\Psi_N)^{-1}\cdot{\frac{1}{\sqrt{N}} \sum_{i=1}^N \int_0^T b(X_t^{i,N}, \mu_t^N) \sigma \, dB_t^{i,H}}.
\]  
Theorem \ref{th: fluctuations} guarantees that the vector $\frac{1}{\sqrt{N}} \sum_{i=1}^N \int_0^T b(X_t^{i,N}, \mu_t^N) \sigma \, dB_t^{i,H}$ converges in law to \(\mathcal{N}(0, \Sigma^2)\), while Proposition \ref{prop: denominator} establishes that the matrix $\frac{1}{N}\Psi_N$ converges in probability to
$$\Psi = \left(\int_0^T \mathbb{E}[b_i(\bar{X}_t, \bar{\mu}_t)b_j(\bar{X}_t, \bar{\mu}_t)] \, dt\right)_{i,j=1,\dots, p}.$$
By applying Slutsky's theorem, we deduce that \(\sqrt{N}(\tilde{\theta}_N - \theta^0)\) converges in law to \(\mathcal{N}(0, \tilde{\Sigma}^2)\), where  
\[
\tilde{\Sigma}^2 = \Psi^{-2}\Sigma^2,
\]  
as desired.

\end{proof}

\subsection{About the alternative estimators: proofs}
This section is dedicated to showing that the computable estimators provide a reliable approximation of the estim-actor \(\tilde{\theta}_N\), which exhibits several favorable asymptotic properties, as demonstrated in the previous subsection. We begin by establishing a bound on the error introduced when approximating the Malliavin derivatives with the exponential, as described in Proposition \ref{prop: approx Db}.

\subsubsection{Proof of Proposition \ref{prop: approx Db}}
\begin{proof}
To begin, observe that by the definition of \(D^i_s b(X_t^{i,N}, \mu_t^N)\), we have  
\begin{align}{\label{eq: start approx Db}}  
& \big|D^i_s b(X_t^{i,N}, \mu_t^N) - \sigma \partial_x b(X_t^{i,N}, \mu_t^N) \exp\big(\int_s^t \langle \theta^0, \partial_x b(X_r^{i,N}, \mu_r^N) \rangle dr\big)1_{s \leq t} \big| \nonumber \\  
& \le \bigg|\frac{1}{N} \sum_{k = 1}^N (\partial_\mu b)(X^{i,N}_t, \mu_t^N)(X_t^{k,N}) D_s^i X^{k,N}_t \bigg| \nonumber \\
& \qquad \qquad + |\partial_x b(X^{i,N}_t, \mu_t^N)| \big| D_s^i X^{i,N}_t - \sigma \exp\big(\int_s^t \langle \theta^0, \partial_x b(X_r^{i,N}, \mu_r^N) \rangle dr\big)1_{s \leq t} \big| \nonumber \\  
& \le \frac{c}{N} + c \big| D_s^i X^{i,N}_t - \sigma \exp\big(\int_s^t \langle \theta^0, \partial_x b(X_r^{i,N}, \mu_r^N) \rangle dr\big)1_{s \leq t} \big|,  
\end{align}  
where we used the boundedness of the derivatives of \(b\) as stated in Assumption \ref{as: deriv}($b$) and the bound on the Malliavin derivatives from Point 2 of Lemma \ref{l: deriv-bds}. {\rev In particular, we have isolated in the sum the term corresponding to \( k = i \), so that the two bounds 
\( |D_s^i X_r^k| \le \frac{c}{N} \) for \( k \neq i \) and \( |D_s^i X_r^i| \le c \), 
together with the fact that the latter is multiplied by an additional factor \( \frac{1}{N} \), 
yield the bound \( \frac{c}{N} \) appearing above.} To bound the last term on the right-hand side of \eqref{eq: start approx Db}, let us introduce the process  
\begin{equation}{\label{eq: process Zst}}  
Z_{s,t}^i := \sigma  + \int_s^t \langle \theta^0, \partial_x b (X_r^{i,N}, \mu_r^N) \rangle Z_{s,r}^i \, dr.  
\end{equation}  
This equation has an explicit solution given by the exponential:  
\[
Z_{s,t}^i = \sigma \exp\left(\int_s^t \langle \theta^0, \partial_x b (X_r^{i,N}, \mu_r^N) \rangle \, dr \right).  
\]  
Observe that \(|D_s^i X_t^{i,N} - Z_{s,t}^i 1_{s \le t}| = 0\) for \(s > t\). For \(s \le t\), the dynamics of the Malliavin derivatives in \eqref{eq: DsX}, together with \eqref{eq: process Zst}, imply  
\begin{align*}  
|D_s^i X_t^{i,N} - Z_{s,t}^i| &\le \int_s^t |\partial_x b(X_r^{i,N}, \mu_r^N)| |D_s^i X_r^{i,N} - Z_{s,r}^i| \, dr \\  
& \quad + \int_s^t \frac{1}{N} \sum_{k = 1}^N |\partial_\mu b (X_r^{i,N}, \mu_r^N)(X_r^{k,N})| |D_s^i X_r^{k,N}| \, dr \\  
& \le c \int_s^t |D_s^i X_r^{i,N} - Z_{s,r}^i| \, dr + \frac{c}{N},  
\end{align*}  
where we employed the boundedness of the derivatives of \(b\) as given by Assumption \ref{as: deriv}($b$) and Point 2 of Lemma \ref{l: deriv-bds}. By Gronwall's lemma, this yields  
\begin{equation}{\label{eq: Ds - Z}}  
|D_s^i X_t^{i,N} - Z_{s,t}^i| \le \frac{c}{N}.  
\end{equation}  
Substituting this bound into \eqref{eq: start approx Db}, we obtain the desired result.  
\end{proof}

We now leverage the bound established in the proposition above to demonstrate that \(\hat{\theta}_{N, \epsilon}^{(2)}\) performs well asymptotically, as asserted in Theorem \ref{th: estim increments}.

\subsubsection{Proof of Theorem \ref{th: estim increments}}
\begin{proof}
From the definitions of \(\hat{\theta}_{N, \epsilon}^{(2)}\) in \eqref{eq: def estimator increments}, and using the process \(Z_{s,t}^i\) introduced in \eqref{eq: process Zst}, we can write, thanks to the approximation \eqref{eq: Ds - Z},  
\begin{align}{\label{eq: comparaison}}
&\tilde{\theta}_N - \hat{\theta}_{N, \epsilon}^{(2)} = o(\frac{1}{N}) \\
& + (\frac{1}{N}\Psi_N)^{-1}\cdot{\frac{1}{N} \sum_{i=1}^N \int_0^T \int_0^t \partial_x b(X_t^{i,N}, \mu_t^N) \left( \frac{\frac{1}{\epsilon}(X_t^{i, x_0^i + \epsilon} - X_t^{i, x_0^i})}{\frac{1}{\epsilon}(X_s^{i, x_0^i + \epsilon} - X_s^{i, x_0^i}) \lor 1}  \sigma - Z_{s,t}^i \right) \phi(t,s)  \sigma\, ds \, dt}. \nonumber
\end{align}
The factor $(\frac{1}{N}\Psi_N)^{-1}$ is of order \(1\), as it converges in probability to a constant matrix \(\Psi\), according to Proposition \ref{prop: denominator}. Let us now analyze remaining factor in the second term, which is bounded (in absolute value) by  
\[
\frac{c}{N} \sum_{i=1}^N \int_0^T \int_0^t \left| \frac{\frac{1}{\epsilon}(X_t^{i, x_0^i + \epsilon} - X_t^{i, x_0^i})}{\frac{1}{\epsilon}(X_s^{i, x_0^i + \epsilon} - X_s^{i, x_0^i}) \lor 1}  \sigma - Z_{s,t}^i \right| \phi(t,s) \, ds \, dt.
\]  
Define  
\begin{align*}
\left| \frac{\frac{1}{\epsilon}(X_t^{i, x_0^i + \epsilon} - X_t^{i, x_0^i}) \sigma}{\frac{1}{\epsilon}(X_s^{i, x_0^i + \epsilon} - X_s^{i, x_0^i}) \lor 1} - Z_{s,t}^i \right| & \le \left| \frac{\frac{1}{\epsilon}(X_t^{i, x_0^i + \epsilon} - X_t^{i, x_0^i}) \sigma}{\frac{1}{\epsilon}(X_s^{i, x_0^i + \epsilon} - X_s^{i, x_0^i}) \lor 1} - \frac{\partial_{x_0^i}X_t^{i, x_0^i} \sigma}{\partial_{x_0^i}X_s^{i, x_0^i} \lor \frac{1}{2}} \right| + \left| \frac{\partial_{x_0^i}X_t^{i, x_0^i} \sigma}{\partial_{x_0^i}X_s^{i, x_0^i} \lor \frac{1}{2}} - Z_{s,t}^i \right| \\
&=: E_1 + E_2.
\end{align*}
To bound \(E_2\), introduce the process  
\[
Y_t^i := 1 + \int_0^t \langle \theta^0,\,\partial_x b(X_r^{i,N}, \mu_r^N)\rangle Y_r^i \, dr.
\]  
This process has the explicit solution \(Y_t^i = \exp\left( \int_0^t  \langle \theta^0,\,\partial_x b(X_r^{i,N}, \mu_r^N)\rangle \, dr \right)\), so for \(s \le t\),  
\[
 \sigma\frac{Y_t^i}{Y_s^i} =  \sigma\exp\left( \int_s^t \partial_x b(X_r^{i,N}, \mu_r^N) \, dr \right) = Z_{s,t}^i.
\]  
Thus,  
\[
E_2 = \sigma \left| \frac{\partial_{x_0^i}X_t^{i, x_0^i}}{\partial_{x_0^i}X_s^{i, x_0^i} \lor \frac{1}{2}} - \frac{Y_t^i}{Y_s^i} \right| \le \frac{ \sigma\left| \partial_{x_0^i}X_t^{i, x_0^i} - Y_t^i \right|}{Y_s^i} + \frac{ \sigma\left| \partial_{x_0^i}X_t^{i, x_0^i} \right|}{\frac{1}{2} Y_s^i} \left| \partial_{x_0^i}X_s^{i, x_0^i} \lor \frac{1}{2} - Y_s^i \right|.  
\]  
From Assumption \ref{as: lb deriv drift}, we know \(Y_t^i \ge \exp(Mt) \ge 1\) for \(t \in [0, T]\), and from Lemma \ref{l: x0}, \(|\partial_{x_0^i}X_t^{i, x_0^i}| \le c\). Furthermore, acting as we did in order to prove \eqref{eq: Ds - Z}, it is straightforward to obtain
\begin{equation}{\label{eq: partialx0 - Y}}
|\partial_{x_0^i}X_t^{i, x_0^i} - Y_t^i| \le \frac{c}{N}.
\end{equation}
Additionally, \(Y_s^i \le \exp(cT) = c\) for \(s \in [0, T]\). Substituting these bounds, we find  
\[
E_2 \le \frac{c}{N} + c \cdot 1_{\{\partial_{x_0^i} X_s^{i, x_0^i} < \frac{1}{2}\}}.
\]  
Taking the expectation,  
\[
\mathbb{E}[E_2] \le \frac{c}{N} + \mathbb{P}(\partial_{x_0^i} X_s^{i, x_0^i} < \frac{1}{2}).
\]  
Using  
\begin{equation}{\label{eq: proba partialx0}}
\mathbb{P}(\partial_{x_0^i} X_s^{i, x_0^i} < \frac{1}{2}) = \mathbb{P}(Y_s^i - \partial_{x_0^i} X_s^{i, x_0^i} > Y_s^i - \frac{1}{2}) \le \mathbb{P}(Y_s^i - \partial_{x_0^i} X_s^{i, x_0^i} > \frac{1}{2}) \le \frac{c}{N},
\end{equation}  
where we used \(Y_s^i \ge 1\), Markov's inequality, and \eqref{eq: partialx0 - Y}, we conclude  
\[
\mathbb{E}[E_2] \le \frac{c}{N}.
\]  
Let us now proceed to study \(E_1\).
From a Taylor expansion, we directly obtain, for any \( t \in [0, T] \):  
\[
X_t^{i, x_0^i + \epsilon} - X_t^{i, x_0^i} = \epsilon \partial_{x_0^i} X_t^{i, x_0^i} + \epsilon^2 \partial^2_{x_0^i} X_t^{x_0^i + \tau \epsilon},
\]  
for some \( \tau \in [0, 1] \). Thus, using also Point 2 of Lemma \ref{l: x0}, we deduce:  
\[
\left| \frac{1}{\epsilon}(X_t^{i, x_0^i + \epsilon} - X_t^{i, x_0^i}) - \partial_{x_0^i} X_t^{i, x_0^i} \right| \le \epsilon |\partial^2_{x_0^i} X_t^{x_0^i + \tau \epsilon}| \le c \epsilon.
\]  
Now, observe that using the boundedness of \( |\partial_{x_0^i} X_t^{i, x_0^i}| \) from Point 1 of Lemma \ref{l: x0}, we have:  
\[
\begin{aligned}
E_1 & \le \frac{1}{2} \left| \frac{1}{\epsilon}(X_t^{i, x_0^i + \epsilon} - X_t^{i, x_0^i}) - \partial_{x_0^i} X_t^{i, x_0^i} \right|  
+ \frac{c}{2} \left| \frac{1}{\epsilon}(X_t^{i, x_0^i + \epsilon} - X_t^{i, x_0^i}) \lor 1 - \partial_{x_0^i} X_t^{i, x_0^i} \lor \frac{1}{2} \right| \\
& \le 2 c \epsilon + c 1_{\{\partial_{x_0^i} X_s^{i, x_0^i} < \frac{1}{2}\}} + \frac{c}{2} \left| \frac{1}{\epsilon}(X_t^{i, x_0^i + \epsilon} - X_t^{i, x_0^i}) \lor 1 - \partial_{x_0^i} X_t^{i, x_0^i} \right| 1_{\{\partial_{x_0^i} X_s^{i, x_0^i} \ge \frac{1}{2}\}} \\
& \le c \epsilon + c 1_{\{\partial_{x_0^i} X_s^{i, x_0^i} < \frac{1}{2}\}}  
+ c 1_{\{\partial_{x_0^i} X_s^{i, x_0^i} \ge \frac{1}{2}\} \cap \{\frac{1}{\epsilon}(X_t^{i, x_0^i + \epsilon} - X_t^{i, x_0^i}) < 1\}}.
\end{aligned}
\]  
Using Markov's inequality and Point 2 of Lemma \ref{l: x0}, we have:  
\[
\mathbb{P} \big( \{\partial_{x_0^i} X_s^{i, x_0^i} \ge \frac{1}{2}\} \cap \{\frac{1}{\epsilon}(X_t^{i, x_0^i + \epsilon} - X_t^{i, x_0^i}) < 1\} \big)  
\le \mathbb{P}(\epsilon |\partial^2_{x_0^i} X_t^{x_0^i + \tau \epsilon}| > \frac{1}{2}) \le 2 \epsilon \E[|\partial^2_{x_0^i} X_t^{x_0^i + \tau \epsilon}|] \le c \epsilon.
\]  
Together with \eqref{eq: proba partialx0}, this ensures:  
\[
\E[E_1] \le \frac{c}{N} + c \epsilon.
\]  
Combining these results, we conclude that the numerator of \eqref{eq: comparaison} is bounded in \( L^1 \) by \( c \epsilon + \frac{c}{N} \). Thus, requiring \( \epsilon = o(1) \) ensures that the numerator of \eqref{eq: comparaison} converges to \( 0 \) in \( L^1 \) and hence in probability. Since $(\frac{1}{N} \Psi_N)^{-1}$ converges in probability to a constant matrix $\Psi$, we deduce that \( \tilde{\theta}_N - \hat{\theta}_{N, \epsilon}^{(2)} \) converges to \( 0 \) in probability as well.  

Therefore, the consistency of \( \tilde{\theta}_N \), proven in Theorem \ref{th: consistency}, implies the consistency of \( \hat{\theta}_{N, \epsilon}^{(2)} \) as well. Similarly, under the assumption \( \epsilon = o\left(\frac{1}{\sqrt{N}}\right) \), it follows that \( \sqrt{N}(\tilde{\theta}_N - \hat{\theta}_{N, \epsilon}^{(2)}) \) converges to \( 0 \) in probability (and hence in law), which implies the asymptotic Gaussianity of \( \hat{\theta}_{N, \epsilon}^{(2)} \), thanks to Theorem \ref{th: CLT fakestimator}.  
The proof is therefore concluded.  
\end{proof}

Let us now proceed with the proof of the asymptotic properties of the fixed-point estimator, from which we will also derive the asymptotic properties of the iterative estimator.

\subsubsection{Proof of Theorem \ref{th: fp}}
\begin{proof}
Recall that we are considering the case \( p = 1 \) here. For the sake of consistency, we have chosen to retain the notation \( (\Psi_N)^{-1} \) for the denominator.

Observe that, according to \eqref{eq: fake with strato} and the definition of \( F_N \), we can write
\begin{align*}
 & \tilde{\theta}_N = F_N(\theta^0) \\
 &+ \Psi_N^{-1} \sum_{i = 1}^N \int_0^T \int_0^t \left( D^i_s b(X_t^{i,N}, \mu_t^N)- \sigma \partial_x b(X_t^{i,N}, \mu_t^N) \exp\left( \int_s^t \theta^0 \partial_x b(X_r^{i,N}, \mu_r^N)  \, dr \right) 1_{s \leq t} \right) \phi(t,s) \, ds \, dt,   
\end{align*}
which we denote by
\[
\tilde{\theta}_N = F_N(\theta^0) + R_N.
\]
On the other hand, from \eqref{eq: est for consistency} we have
\[
\theta^0 = \tilde{\theta}_N - \Psi_N^{-1} Z^N.
\]
Thus, from the definition of the fixed-point estimator, we get
\begin{equation}{\label{eq: start fp}}
\hat{\theta}_N^{(fp)} - \theta^0 = F_N(\hat{\theta}_N^{(fp)}) - F_N(\theta^0) - R_N + \Psi_N^{-1} Z^N.
\end{equation}
We now wish to analyze each term in the right hand side of the equation above. In particular, we have already thoroughly studied the estim-actor, so we know the asymptotic behavior of the last term $\Psi_N^{-1} Z^N$. Furthermore, Proposition \ref{prop: approx Db} implies that
\begin{equation}{\label{eq: bound RN 48.5}}
 |R_N| \leq \frac{c}{N}.   
\end{equation}
Therefore, to understand the behavior of \( \hat{\theta}_N^{(fp)} - \theta^0 \), we need to examine in detail the asymptotic behavior of \( F_N(\hat{\theta}_N^{(fp)}) - F_N(\theta^0) \).

Observe that  
\begin{equation}\label{eq: increment FN}
F_N(\hat{\theta}_N^{(fp)}) - F_N(\theta^0) = \Psi_N^{-1} \sum_{i = 1}^N \int_0^T \int_0^t \sigma \partial_x b(X_t^{i,N}, \mu_t^N) \big(f(\theta^0) - f(\hat{\theta}_N^{(fp)})\big) \phi(t,s) \, ds \, dt,
\end{equation}  
where \( f(\theta) = \exp\Big(\int_s^t \theta  \partial_x b(X_r^{i,N}, \mu_r^N) \, dr\Big) \).
Applying a Taylor expansion yields  
\[
f(\theta^0) - f(\hat{\theta}_N^{(fp)}) = f'(\theta^0)(\theta^0 - \hat{\theta}_N^{(fp)}) + f''(\bar{\theta})(\theta^0 - \hat{\theta}_N^{(fp)})^2,
\]  
where \(\bar{\theta}\) lies between \(\theta^0\) and \(\hat{\theta}_N^{(fp)}\). From this expansion, two terms emerge, for which we introduce the following notation:  
\begin{equation}\label{eq: FN 2}
F_N(\hat{\theta}_N^{(fp)}) - F_N(\theta^0) =: (\theta^0 - \hat{\theta}_N^{(fp)}) V_N + \tilde{R}_N,
\end{equation}  
where  
\[
V_N := \Psi_N^{-1} \sum_{i = 1}^N \int_0^T \int_0^t \int_s^t \sigma \partial_x b(X_t^{i,N}, \mu_t^N) \partial_x b(X_r^{i,N}, \mu_r^N) \exp\Big(\int_s^t \theta^0 \partial_x b(X_r^{i,N}, \mu_r^N) \, dr\Big) \phi(t,s) \, dr \, ds \, dt.
\] 
Under our hypotheses (in particular, \(\partial_x b \leq 0\) and \(\Theta\) compact in \((\mathbb{R}^+)^p\)), it is evident that \(V_N \geq 0\). Furthermore,  
\begin{align}\label{eq: bound VN}
V_N &\leq \|\partial_x b\|_\infty^2 \bigg(\frac{1}{N} \Psi_N\bigg)^{-1} \frac{\sigma}{N} \sum_{i = 1}^N \int_0^T \int_0^t |t-s| \phi(t,s) \, dt \, ds \nonumber \\
&\leq \frac{\|\partial_x b\|_\infty^2}{l^2 T} \frac{2H-1}{2H+1} T^{2H+1} = C_T,
\end{align}  
where we used the fact that the exponential term is bounded by \(1\) (since its argument is negative under our hypotheses) and that \(\big(\frac{1}{N} \Psi_N\big)^{-1} \leq \frac{1}{l^2 T}\). Recall that \(C_T < 1\) by assumption, ensuring that \(V_N \in [0, 1)\).  

Observe that additionally we can introduce $\tilde{V}_N$ such that \(V_N = : \psi_N^{-1} \tilde{V}_N\), where \(\big(\frac{1}{N}\psi_N\big)^{-1}\) is of order \(1\), as it converges in probability to a constant, {\change and \(\frac{1}{N} \tilde{V}_N\) converges in probability, by Proposition \ref{prop: denominator}, to \(\tilde{V}\) as defined in \eqref{eq: Vtilde 24.5}, which we recall here for convenience }
\[
\tilde{V} = \int_0^T \int_0^t \int_s^t \mathbb{E}\big[\partial_x b (\bar{X}_t, \bar{\mu}_t) \partial_x b (\bar{X}_r, \bar{\mu}_r) \exp\Big(\int_s^t \theta^0 \partial_x b (\bar{X}_r, \bar{\mu}_r) \, dr\Big)\big] \sigma \phi(t,s) \, dr \, ds \, dt,
\]  
as \(N \to \infty\). Therefore,  
\begin{equation}\label{eq: conv VN}
V_N \xrightarrow{\mathbb{P}} \Psi^{-1} \tilde{V} =: V,
\end{equation}  
as \(N \to \infty\). In particular, this implies that \(V_N\) is of order one.

We now replace \eqref{eq: FN 2} into \eqref{eq: start fp}, obtaining  
\[
(\hat{\theta}_N^{(fp)} - \theta^0)(1 - V_N) = \tilde{R}_N - R_N + \Psi_N^{-1} Z^N.
\]  
We claim that  
\begin{equation}\label{eq: claim end fp}
\tilde{R}_N = o_{L^1}\Big(\frac{1}{N}\Big).
\end{equation}  
Once this claim is proven, the proof is completed, as it would follow that  
\[
\hat{\theta}_N^{(fp)} - \theta^0 = o_{L^1}\Big(\frac{1}{N}\Big) + \frac{\Big(\frac{1}{N}\Psi_N\Big)^{-1}\frac{Z^N}{N}}{(1 - V_N)}.
\]  
{\change
The consistency of the estimator follows directly from the convergence of \(\frac{Z^N}{N}\) to zero, as stated in Proposition \ref{prop: conv num}, together with the convergence in probability of  
\[
\frac{\left( \frac{1}{N} \Psi_N \right)^{-1}}{1 - V_N}
\]  
to a constant (see Equation \eqref{eq: conv VN}).

For the asymptotic normality, recall from Theorem \ref{th: fluctuations} that  
\[
\left( \frac{1}{N} \Psi_N \right)^{-1} \frac{Z^N}{\sqrt{N}} \xrightarrow{d} \mathcal{N}(0, \tilde{\Sigma}^2),
\]  
so that Slutsky’s theorem yields
\[
\sqrt{N} \left( \hat{\theta}_N^{(fp)} - \theta^0 \right) \xrightarrow{d} \mathcal{N}\left(0, \left( \frac{\tilde{\Sigma}}{1 - V} \right)^2 \right).
\]
Recall that \(\tilde{\Sigma} = \frac{\Sigma}{\Psi}\), so that
\[
\frac{\tilde{\Sigma}}{1 - V} = \frac{\Sigma}{\Psi (1 - V)}.
\]
Using the identity \(V = \Psi^{-1} \tilde{V}\) from Equation \eqref{eq: conv VN}, we obtain
\[
\Psi (1 - V) = \Psi - \tilde{V},
\]
and thus the asymptotic variance simplifies to
\[
\left( \frac{\Sigma}{\Psi - \tilde{V}} \right)^2,
\]
as claimed.}

In order to conclude the proof, let us prove the claim in \eqref{eq: claim end fp}. To this end, we again employ \eqref{eq: start fp}. However, to bound \(F_N(\hat{\theta}_N^{(fp)}) - F_N(\theta^0)\), instead of using \eqref{eq: FN 2}, we apply the rough bound 
\begin{equation}{\label{eq: lip FN}}
 |F_N(\hat{\theta}_N^{(fp)}) - F_N(\theta^0)| \leq C_T |\theta^0 - \hat{\theta}_N^{(fp)}|,   
\end{equation}
which is derived using similar reasoning as in the proof of \eqref{eq: bound VN}. This implies  
\[
|\hat{\theta}_N^{(fp)} - \theta^0| \leq C_T |\hat{\theta}_N^{(fp)} - \theta^0| + |R_N| + |\Psi_N^{-1} Z^N|.
\]  
Rearranging and using the fact that \(C_T < 1\), we find  
\[
|\hat{\theta}_N^{(fp)} - \theta^0| \leq \frac{|R_N|}{1 - C_T} + \frac{1}{1 - C_T}|\Psi_N^{-1} Z^N|.
\]  
Next, we estimate the expectation of \(|\tilde{R}_N|\):  
\[
\mathbb{E}[|\tilde{R}_N|] \leq c \mathbb{E}[|\hat{\theta}_N^{(fp)} - \theta^0|^2].
\]  
Substituting the bound above, we get  
\[
\mathbb{E}[|\tilde{R}_N|] \leq c \mathbb{E}[|R_N|^2] + c \mathbb{E}[|\Psi_N^{-1} Z^N|^2].
\]  
Under our hypotheses, since \(|N \Psi_N^{-1}|\) is bounded by a constant and \(\mathbb{E}[|Z^N|^2] \leq cN\) (see \eqref{eq: 34.5 end ZN}) it follows, using also \eqref{eq: bound RN 48.5}, that  
\[
\mathbb{E}[|\tilde{R}_N|] \leq \frac{c}{N^2} + \frac{c}{N^2} \mathbb{E}[|Z^N|^2] \leq \frac{c}{N}.
\]  
This establishes the claim in \eqref{eq: claim end fp}, and the proof of the theorem is thus concluded.

\end{proof}

\subsubsection{Proof of Corollary \ref{cor: it}}
\begin{proof}
This result follows easily from Theorem \ref{th: fp}, along with the observation that  
\[
\hat{\theta}^{(it)}_{N,n} - \theta^0 = \hat{\theta}^{(it)}_{N,n} - \hat{\theta}_N^{(fp)} + \hat{\theta}_N^{(fp)} - \theta^0.
\]  
Using the definitions of \(\hat{\theta}^{(it)}_{N,n}\) and \(\hat{\theta}_N^{(fp)}\), and proceeding as in the derivation of \eqref{eq: lip FN}, we have indeed
\[
|\hat{\theta}^{(it)}_{N,n} - \hat{\theta}_N^{(fp)}| = |F_N(\hat{\theta}^{(it)}_{N,n-1}) - F_N(\hat{\theta}_N^{(fp)})| \le C_T |\hat{\theta}^{(it)}_{N,n-1} - \hat{\theta}_N^{(fp)}|.
\]  
By iterating this argument, we obtain  
\[
|\hat{\theta}^{(it)}_{N,n} - \hat{\theta}_N^{(fp)}| \le C_T^n |\hat{\theta}^{(it)}_{N,0} - \hat{\theta}_N^{(fp)}|.
\]  
This error is therefore negligible for establishing consistency, as \(C_T^n \to 0\) for \(n \to \infty\), which always holds since \(C_T < 1\). Additionally, it is negligible for proving asymptotic Gaussianity, provided that \(\sqrt{N} (C_T)^n \to 0\) as \(N, n \to \infty\), which is a condition stated in our theorem. The proof is thus complete.

\end{proof}

\section{Proof auxiliary results}{\label{s: proof technical}}
In this section, we present the proofs of all technical results that have been stated but not yet proven, as they were essential for establishing our main results. We begin with the proof of Lemma \ref{l: moments}, which collects various bounds on the moments of the processes under consideration.

\subsection{Proof of Lemma \ref{l: moments}}
\begin{proof}
Point 1. Let us begin by proving the boundedness of the moments. From the particle dynamics described in \eqref{eq: model}, we have for any \( i=1, \ldots, N \), \( 0 \le t \le T \), and \( q \ge 2 \):

\[
\E[|X_t^{i,N}|^q] = \E\left[ \left| X_0^{i,N} + \int_0^t  \langle \theta^0,\,b(X_s^{i,N}, \mu_s^N)\rangle \, ds + \sigma B_t^{i,H} \right|^q \right].
\]
Using Jensen's inequality, we can bound this expression as follows:
\[
\E[|X_t^{i,N}|^q] \leq c \E[|X_0^{i,N}|^q] + c \|\theta^0\| t^{q-1} \int_0^t \E[\|b(X_s^{i,N}, \mu_s^N)\|^q] \, ds + c \E[|B_t^{i,H}|^q].
\]
By Assumption \ref{as: moments}, we know that \( \E[|X_0^{i,N}|^q] < \infty \). Additionally, Assumptions \ref{as: lip} and \ref{as: bound} ensure that:
\begin{equation}{\label{eq: bound b}}
 \|b(X_s^{i,N}, \mu_s^N)\| \leq c \left( 1 + |X_s^{i,N}| + W_2(\mu_s^N, \delta_0) \right).   
\end{equation}
Applying Jensen’s inequality yields:
\begin{equation}{\label{eq: bound W}}
 \E[W_2^q(\mu_s^N, \delta_0)] \leq \E\left[\left( \frac{1}{N} \sum_{j=1}^N |X_s^{j,N}|^2 \right)^{\frac{q}{2}}\right] \leq \frac{1}{N} \sum_{j=1}^N \E[|X_s^{j,N}|^q] = \E[|X_s^{i,N}|^q],   
\end{equation}
where the last equality follows from the fact that the particles are identically distributed. Moreover, since the fractional Brownian motion \( B^{H,i} \) is an \( H \)-self-similar process {\change (see for example Point 2 of Proposition 2.2 in \cite{Ivan fBm})}, we have for any \( q > 0 \):

\begin{equation}{\label{eq: moments fBm}}
 \E\left[\sup_{t \in [0, T]} |B_t^{i,H}|^q \right] \leq c T^{qH}.   
\end{equation}
Putting everything together, we obtain:
\[
\E[|X_t^{i,N}|^q] \leq c + c \|\theta^0 \| t^{q-1} \int_0^t \E[|X_s^{i,N}|^q] \, ds + c t^{qH}.
\]
Applying Gronwall’s lemma, we conclude that \( \E[|X_t^{i,N}|^q] \leq c \), where \( c \) is a constant depending on \( t \) but uniform in \( N \). This establishes the boundedness of the moments for the interacting particles.

The boundedness of the moments for the associated McKean-Vlasov equation can be derived in a similar manner. Consequently, the proof of Point 1 is completed due to the bound in \eqref{eq: bound W}.\\
\\
Point 2: Let us proceed with the analysis of the increments of the interacting particle system. The proof again relies on their dynamics. For any \( 0 \leq s \leq t \leq T \), we have:
\[
X_t^{i,N} - X_s^{i,N} = \int_s^t  \langle \theta^0,\,b(X_u^{i,N}, \mu_u^N)\rangle \, du + \sigma B_t^{i,H} - \sigma B_s^{i,H}.
\]
By applying Jensen's inequality we directly obtain, for any $q \ge 2$:
\[
\E[|X_t^{i,N} - X_s^{i,N}|^q] \leq c (t-s)^{q-1} \int_s^t \E[\|b(X_u^{i,N}, \mu_u^N)\|^q] \, du + c \E[|B_t^{i,H} - B_s^{i,H}|^q],
\]
which simplifies, by \eqref{eq: bound b}, \eqref{eq: bound W}, and \eqref{eq: moments fBm}, to:
\begin{equation}{\label{eq: increments}}
\E[|X_t^{i,N} - X_s^{i,N}|^q] \leq c (t-s)^q + c (t-s)^{qH},    
\end{equation}
as desired. Furthermore, by Jensen's inequality, we also obtain:
\[
\E[W_2^q(\mu_t^N, \mu_s^N)] \leq \E\left[ \left( \frac{1}{N} \sum_{j=1}^N |X_t^{j,N} - X_s^{j,N}|^2 \right)^{\frac{q}{2}} \right] \leq \frac{1}{N} \sum_{j=1}^N \E[|X_t^{j,N} - X_s^{j,N}|^q] = \E[|X_t^{i,N} - X_s^{i,N}|^q],
\]
due to the identical distribution of the particles.
Thus, inequality \eqref{eq: increments} concludes the proof of Point 2. \\
\\
Point 3 is obtained in the same way as Point 2, by replacing the dynamics of the interacting particle system with those of the limiting independent particles.
    
\end{proof}

\subsection{Proof of Lemma \ref{l: deriv-bds}}
\begin{proof}
Point 1. The explicit computation of the Malliavin derivative for independent particles follows directly from the chain rule. Let us now move to the case of interacting particles. By employing the empirical projection representation of the drift coefficient as introduced in Definition \ref{def: empirical projection}, the system can be rewritten as follows: for \(i = 1, \dots, N\),
\[
X_t^{i,N} = X_0^{i,N} + \int_0^t  \langle \theta^0,\, b^{i,N}(X_r^{1,N}, \dots, X_r^{N, N})\rangle \, dr +  \sigma dB_t^{i,H}.
\]
The Lipschitz assumption on \(b\), provided in Assumption \ref{as: lip}, directly implies that the coefficient \(b^{i,N}\) is also Lipschitz. Applying results from \cite{NuaBook} for classical SDEs (such as Theorem 2.2.1), we immediately obtain that the Malliavin derivatives of \(X^i\) exist, are unique, and square-integrable. Furthermore, for \(s > t\), we have \(D_s^j X_t^{i,N} = 0\) \(\mathbb{P}\)-a.e. for all \(i, j = 1, \dots, N\).
Using the chain rule, the Malliavin derivative for \(0 \leq s \leq t \leq T\) is expressed as
\[
D_s^j X_t^{i,N} =  \sigma 1_{\{ i = j \}} + \int_s^t \langle\theta^0,\,\sum_{k = 1}^N \left(\partial_{x^k} b^{i,N}\right)(X_r^{1,N}, \dots, X_r^{N,N})\rangle D_s^j X_r^{k, N} \, dr.
\]
Next, we apply Proposition \ref{prop: deriv empirical measure} and revert the empirical projection maps to their original form, yielding
\[
D_s^j {X}^{i,N}_t =  \sigma 1_{\{i=j\}} +  \int_s^t \langle\theta^0,\,\Big( \partial_x b({X}^{i,N}_r, {\mu}^N_r) D_s^j {X}^{i,N}_r \, + \frac{1}{N}\sum_{k=1}^N (\partial_{\mu}b)({X}^{i,N}_r, \mu^N_r)(X^{k,N}_r) D_s^j X^{k,N}_r \Big)\rangle \, dr,
\]
noting that the derivative \(\partial_{x^i} b^{i,N}\) generates two terms, as both \({X}^{i,N}_r\) and \({\mu}^N_r\) depend on \(x^i\).
This concludes the proof of Point 1.\\
\\
Point 2. Observe that
    \begin{align*}
        &|D_s^i\bar{X}^i_t | \leq  \sigma  + c\int_s^t \|\partial_x b(\bar{X}^i_r, \bar{\mu}_r)\||D^i_s\bar{X}^i_r| dr \leq  \sigma  + cK \int_s^t  |D^i_s\bar{X}^i_r| dr,
    \end{align*}
    where $K$ is the constant from Assumption \ref{as: deriv}($b$). The Grönwall lemma now yields the desired result. As for the interacting case, using the dynamics of \( D_s^j{X}^{i,N}_t \), we have
\begin{equation}{\label{eq: dynamics DX}}
|D_s^j{X}^{i,N}_t| \leq c 1_{\{i=j\}} + c \int_s^t \left( |D_s^j{X}^{i,N}_r| +  \frac{1}{N} \sum_{k = 1}^N |D_s^j{X}^{k,N}_r| \right) dr,
\end{equation}
where we have used the boundedness of the derivatives of \( b \) and the fact that
\[
\left\|\frac{1}{N} \sum_{k = 1}^N (\partial_{\mu}b)({X}^{i,N}_r, \mu^N_r)(X^{k,N}_r)\right\| | D_s^j{X}^{k,N}_r| \leq \frac{c}{N} \sum_{k = 1}^N |D_s^j{X}^{k,N}_r|.
\]
This implies that, by averaging over \( i \),
\[
\frac{1}{N} \sum_{i = 1}^N |D_s^j{X}^{i,N}_r| \leq \frac{c}{N} + 2c \int_s^t \frac{1}{N} \sum_{i = 1}^N |D_s^j{X}^{i,N}_r| dr.
\]
Using Gronwall's lemma, as in Point 2 of Lemma \ref{l: deriv-bds}, we deduce that
\begin{equation}{\label{eq: bound sum DX}}
\sup_{t \in [s, T]} \frac{1}{N} \sum_{i = 1}^N |D_s^j{X}^{i,N}_r| \leq \frac{c}{N},
\end{equation}
where the constant \( c \) is uniform in \( N \), $t$ and $s$, though not in \( T \), having used $|t -s| \le T$. Substituting this into \eqref{eq: dynamics DX}, and recalling that we are considering the case \( i \neq j \), we get
\[
\sup_{t \in [s, T]}|D_s^j{X}^{i,N}_t| \leq c \int_s^T \sup_{u \in [s, r]} |D_s^j{X}^{i,N}_u| dr + \frac{c}{N}.
\]
A final application of Gronwall's lemma concludes the proof of the bound on $D_s^j X_t^i$ for $j \neq i$. For $j = i$ we clearly have
\begin{align} \label{inter-deriv}
        &|D_s^i{X}^{i,N}_t| \leq  \sigma  + c\int_s^t  |D^i_s{X}^{i,N}_r| dr+\int_s^t \frac{c}{N}\sum_{k=1}^N |D_s^i{X}^{k,N}_r| dr.
    \end{align}
    Plugging \eqref{eq: bound sum DX} into \eqref{inter-deriv} and applying Grönwall's lemma we conclude that 
$$|D_s^i{X}^{i,N}_t| \leq c(1 + \frac{1}{N}) \exp(c |t-s|) \le c,$$
with a constant $c$ that does not depend on $N$, $t$ or $s$ as $N \ge 1$ and $|t-s| \le T$. \\
\\
Point 3. Let us begin by proving the bound on \( D_u^i\bar{X}^i_t - D_v^i\bar{X}^i_t \).  
For \( v \le u \le T \), the dynamics in \eqref{eq: Malliavin indep} gives us the following:
\begin{align}{\label{eq: increment DXbar}}
D_u^i\bar{X}^i_t - D_v^i\bar{X}^i_t = \int_u^t \langle\theta^0,\,\partial_x b(\bar{X}^i_r, \bar{\mu}_r)\rangle (D_u^i\bar{X}^i_r - D_v^i\bar{X}^i_r) \, dr - \int_v^u \langle\theta^0,\,\partial_x b(\bar{X}^i_r, \bar{\mu}_r) \rangle D_v^i\bar{X}^i_r \, dr.
\end{align}
Using the boundedness of \( \partial_x b \) and \( D_v^i\bar{X}^i_r \), as given in Assumption \ref{as: deriv}($b$) and the previously proven Point 2, we can deduce:
\begin{equation}{\label{eq: increment 4}}
|D_u^i\bar{X}^i_t - D_v^i\bar{X}^i_t| \le c \int_u^t |D_u^i\bar{X}^i_r - D_v^i\bar{X}^i_r| \, dr + c |u-v|.
\end{equation}
Applying Gronwall's Lemma then provides the desired result.
Next, let us analyze \( D_u^i\bar{X}^i_t - D_u^i\bar{X}^i_s \). From the dynamics in \eqref{eq: Malliavin indep}, for \( u \le s \le t \le T \), we obtain:
\[
|D_u^i\bar{X}^i_t - D_u^i\bar{X}^i_s| \le c\int_s^t \|\partial_x b(\bar{X}^i_r, \bar{\mu}_r)\| |D_s^i\bar{X}^i_r| \, dr \le c |t-s|,
\]
which follows again from the boundedness of \( \partial_x b \) and \( D_s^i\bar{X}^i_r \).
For the final bound in Point 3, we use \eqref{eq: increment DXbar} to write:
\[
|D_u^i\bar{X}^i_t - D_v^i\bar{X}^i_t - (D_u^i\bar{X}^i_s - D_v^i\bar{X}^i_s)| \le c\int_s^t \|\partial_x b(\bar{X}^i_r, \bar{\mu}_r)\| |D_u^i\bar{X}^i_r - D_v^i\bar{X}^i_r| \, dr.
\]
Thus,
\[
|D_u^i\bar{X}^i_t - D_v^i\bar{X}^i_t - (D_u^i\bar{X}^i_s - D_v^i\bar{X}^i_s)| \le c |u-v| \int_s^t \|\partial_x b(\bar{X}^i_r, \bar{\mu}_r)\| \, dr \le c |u-v| |t-s|,
\]
which completes the proof of Point 3.\\
\\
Point 4. We now apply a similar approach for the interacting particle system. For any \( i, j \in \{1, \dots, N\} \), \( v \le u \), and \( s \le t \), we have:
\begin{align*}
 D_u^j{X}^{i,N}_t - D_v^j{X}^{i,N}_t & = \int_u^t \langle\theta^0,\,\partial_x b({X}^{i,N}_r, {\mu}^N_r)\rangle (D_u^j{X}^{i,N}_r - D_v^j{X}^{i,N}_r) \, dr \\
 & + \int_u^t \frac{1}{N}\sum_{k=1}^N \langle\theta^0,\,(\partial_{\mu}b)({X}^{i,N}_r, \mu^N_r)(X^{k,N}_r)\rangle (D_u^j{X}^{k,N}_r - D_v^j{X}^{k,N}_r) \, dr  \\
 & - \int_v^u \langle\theta^0,\,\partial_x b({X}^{i,N}_r, {\mu}^N_r)\rangle  D_v^j{X}^{i,N}_r \, dr\\
 &- \int_v^u \frac{1}{N}\sum_{k=1}^N \langle\theta^0,\,(\partial_{\mu}b)({X}^{i,N}_r, \mu^N_r)(X^{k,N}_r)\rangle D_v^j{X}^{k,N}_r \, dr.
\end{align*}
This leads to the inequality:
\begin{align}{\label{eq: increment D IPS 5}}
& |D_u^j{X}^{i,N}_t - D_v^j{X}^{i,N}_t| \\
& \le c \int_u^t |D_u^j{X}^{i,N}_r - D_v^j{X}^{i,N}_r| \, dr + c \int_u^t \frac{1}{N}\sum_{k=1}^N |D_u^j{X}^{k,N}_r - D_v^j{X}^{k,N}_r| \, dr + c |u - v|(1_{i = j} + \frac{1}{N}), \nonumber
\end{align}
using \eqref{eq: bound sum DX} for the last term.
Averaging over \( i \), we obtain:
\[
\frac{1}{N}\sum_{i=1}^N |D_u^j{X}^{i,N}_t - D_v^j{X}^{i,N}_t| \le c \int_u^t \frac{1}{N}\sum_{i=1}^N |D_u^j{X}^{i,N}_r - D_v^j{X}^{i,N}_r| \, dr + \frac{c}{N} |u-v|.
\]
Applying Gronwall's Lemma yields:
\begin{equation}{\label{eq: end increment IPS sum}}
\frac{1}{N}\sum_{i=1}^N |D_u^j{X}^{i,N}_t - D_v^j{X}^{i,N}_t| \le \frac{c}{N} |u-v|,
\end{equation}
which can now be substituted into \eqref{eq: increment D IPS 5} to obtain the desired result.
Finally, observe that, from Point 2, we have directly:
\[
|D_u^j{X}^{i,N}_t - D_u^j{X}^{i,N}_s| \le c \int_s^t \left[|D_u^j{X}^{i,N}_r| + \frac{1}{N}\sum_{k=1}^N |D_u^j{X}^{k,N}_r|\right] \, dr \le c \left(1_{i = j} + \frac{1}{N}\right) |t-s|.
\]
Lastly, we can verify that:
\begin{align*}
 &|D_u^j{X}^{i,N}_t - D_v^j{X}^{i,N}_t - (D_u^j{X}^{i,N}_s - D_v^j{X}^{i,N}_s)| \\
 & \le c \int_s^t \left[|D_u^j{X}^{i,N}_r - D_v^j{X}^{i,N}_r| + \frac{1}{N}\sum_{k=1}^N |D_u^j{X}^{k,N}_r - D_v^j{X}^{k,N}_r|\right] \, dr \\
 & \le c |u - v| |t-s|\left(1_{\{i=j\}} + \frac{1}{N}\right),
\end{align*}
using \eqref{eq: increment D IPS 5} and \eqref{eq: end increment IPS sum}. This concludes the proof of the lemma.

\end{proof}

\subsection{Proof of Lemma \ref{l: L2-bound-loclip}}
\begin{proof}
We prove the statement only for interacting particles. The proof for independent particles is completely analogous.\\
   As $H>\frac{1}{2}$, we have
    \begin{align*}
         \E [\|g_\cdot\|_{\mathcal H}^2] &= \E \left[\left(\int_0^T\int_0^T g({X}^{i,N}_s, {\mu}^{N}_s) g({X}^{i,N}_r, {\mu}^{N}_r)\phi(s,r)ds\,dr\right)\right] \\
         &= \int_0^T\int_0^T \E[g({X}^{i,N}_s, {\mu}^{N}_s) g({X}^{i,N}_r, {\mu}^{N}_r)]\phi(s,r)ds\,dr.
    \end{align*}
    {\change By applying the Cauchy-Schwarz inequality and leveraging the polynomial growth of \( g \), which follows from its local Lipschitz continuity and the fact that \( |g(0, \delta_0)| < \infty \), we obtain the bound  
\[
\E[g({X}^{i,N}_s, {\mu}^{N}_s) g({X}^{i,N}_r, {\mu}^{N}_r)] \leq c \sup_{s \in [0, T]} \E\left[(1 + |X_s^{i,N}| + W_2(\mu_s^N, \delta_0))^k\right],
\]  
for some \( k \geq 2 \). This quantity is finite due to the boundedness of the moments of the process, as stated in Lemma \ref{l: moments}, Point 1.}
    Since $\int_0^T\int_0^T |s-r|^{2H-2}ds\,dr$ is finite, we can conclude that $\E [\|g_\cdot\|_{\mathcal H}^2]$ is bounded. For the boundedness of $\E[\|D_\cdot g({X}^{i,N}_\cdot, {\mu}^{N}_\cdot)\|^2_{\mathcal H\otimes \mathcal H}] $ we write
    \begin{align*}
    &\E[\|D_\cdot g({X}^{i,N}_\cdot, {\mu}^{N}_\cdot)\|^2_{\mathcal H\otimes \mathcal H}] \\
    &\quad = \int_0^T\int_0^T \int_0^T\int_0^T \E [D_u g(X_s^{i,N}, \mu_s^N)D_vg(X_t^{i,N}, \mu_t^N) ]\phi(u,v) \phi(t,s)du\,dv\,dt\,ds
    \end{align*}
    and recall that for $u\leq s$
    \[D_u g(X_s^{i,N}, \mu_s^N)= \partial_xg(X_s^{i,N}, \mu_s^N)D_u X_s^{i,N} + \frac{1}{N}\sum_{k=1}^N (\partial_\mu g)(X_s^{i,N}, \mu_s^N)(X_s^{k,N})D_u X^{k,N}_s\]
    is uniformly bounded thanks to the Assumption \ref{as: deriv}($g$) and Lemma \ref{l: deriv-bds}, Point 2.

\end{proof}

\subsection{Proof of Lemma \ref{l: x0}}
\begin{proof}
Point 1.
Starting from the dynamics in \eqref{eq: dx0 X 7}, one can write:  
\begin{equation}\label{eq: bound x0 8}
|\partial_{x_0^j} {X}^{i, x_0^j}_t| \leq 1_{\{i=j\}} + \int_0^t | \partial_{x_0^j} {X}^{i, x_0^j}_r| \, dr + \frac{c}{N} \sum_{k=1}^N \int_0^t | \partial_{x_0^j} X^{k, x_0^j}_r| \, dr,
\end{equation}  
which, upon averaging over \(i\), yields:  
\[
\frac{1}{N} \sum_{i=1}^N |\partial_{x_0^j} {X}^{i, x_0^j}_t| \leq \frac{c}{N} + 2c \int_0^t \frac{1}{N} \sum_{i=1}^N |\partial_{x_0^j} {X}^{i, x_0^j}_r| \, dr.
\]  
Applying Grönwall's lemma shows that the average is bounded by \(\frac{c}{N}\). Substituting this bound back into \eqref{eq: bound x0 8} gives:  
\[
|\partial_{x_0^j} {X}^{i, x_0^j}_t| \leq c \left(1_{\{i=j\}} + \frac{1}{N}\right) + \int_0^t |\partial_{x_0^j} {X}^{i, x_0^j}_r| \, dr.
\]  
A final application of Grönwall's lemma concludes the proof. \\
\\
Point 2. To establish a bound on the second derivative with respect to the initial condition, we begin by formally computing it. Following the approach in Point 1 of Lemma \ref{l: deriv-bds} and transitioning to the empirical projections, it is straightforward to verify that  
\begin{align*}  
\partial^2_{x_0^j} X_t^{i, x_0^j + \tau} &= \int_0^t \bigg[ \partial_x^2 b (X_r^{i, x_0^j + \tau}, \mu_r^N)(\partial_{x_0^j} X_r^{i, x_0^j + \tau})^2 \\  
&\quad + \frac{1}{N} \sum_{k = 1}^N \partial_\mu(\partial_x b (X_r^{i, x_0^j + \tau}, \mu_r^N))(X_r^{k, x_0^j + \tau}) \partial_{x_0^j} X_r^{k, x_0^j + \tau}\partial_{x_0^j} X_r^{i, x_0^j + \tau} \\  
&\quad + \frac{1}{N} \sum_{\tilde{k} = 1}^N \partial_x(\partial_\mu b (X_r^{i, x_0^j + \tau}, \mu_r^N) (X_r^{\tilde{k}, x_0^j + \tau})) \partial_{x_0^j} X_r^{\tilde{k}, x_0^j + \tau}\partial_{x_0^j} X_r^{i, x_0^j + \tau} \\  
&\quad + \frac{1}{N^2} \sum_{k, \tilde{k} = 1}^N \partial_\mu (\partial_\mu b (X_r^{i, x_0^j + \tau}, \mu_r^N) (X_r^{\tilde{k}, x_0^j + \tau})) (X_r^{k, x_0^j + \tau}) \partial_{x_0^j} X_r^{\tilde{k}, x_0^j + \tau}\partial_{x_0^j} X_r^{k, x_0^j + \tau} \\  
&\quad + \partial_x b (X_r^{i, x_0^j + \tau}, \mu_r^N) \partial^2_{x_0^j} X_r^{i, x_0^j + \tau} \\  
&\quad + \frac{1}{N} \sum_{k = 1}^N \partial_\mu b (X_r^{i, x_0^j + \tau}, \mu_r^N) (X_r^{k, x_0^j + \tau}) \partial^2_{x_0^j} X_r^{k, x_0^j + \tau} \bigg] dr.  
\end{align*}  
From Point 1, we know that \( |\partial_{x_0^j} X_r^{k, x_0^j + \tau}| \le c (1_{\{i=j\}} + \frac{1}{N}) \). Consequently, using the boundedness of the first and second derivatives of \( b \) as assumed in Assumptions \ref{as: deriv} and \ref{as: second derivatives}, we obtain  
\begin{align}{\label{eq: second end}}  
|\partial^2_{x_0^j} X_t^{i, x_0^j + \tau}| &\le \int_0^t \bigg[ (1_{\{i=j\}} + \frac{1}{N})^2 + 2 \frac{c}{N} \sum_{k = 1}^N (1_{\{k=j\}} + \frac{1}{N})(1_{\{i=j\}} + \frac{1}{N}) \nonumber\\  
&\quad + \frac{c}{N^2} \sum_{k,\tilde{k}= 1}^N (1_{\{k=j\}} + \frac{1}{N})(1_{\{\tilde{k} = j\}} + \frac{1}{N}) + c |\partial^2_{x_0^j} X_r^{i, x_0^j + \tau}| + \frac{c}{N} \sum_{k = 1}^N |\partial^2_{x_0^j} X_r^{k, x_0^j + \tau}| \bigg] dr \nonumber\\  
&\le c(1_{\{i=j\}} + \frac{1}{N})^2 + \frac{c}{N}(1_{i = j} + \frac{1}{N}) + \frac{c}{N^2} \nonumber\\  
&\quad + c \int_0^t |\partial^2_{x_0^j} X_r^{i, x_0^j + \tau}| dr + \int_0^t \frac{c}{N} \sum_{k = 1}^N |\partial^2_{x_0^j} X_r^{k, x_0^j + \tau}| dr.  
\end{align}  
Averaging over \( i \), we find  
\[
\frac{1}{N} \sum_{i = 1}^N |\partial^2_{x_0^j} X_t^{i, x_0^j + \tau}| \le \frac{c}{N} + \int_0^t \frac{c}{N} \sum_{k = 1}^N |\partial^2_{x_0^j} X_r^{k, x_0^j + \tau}| dr,  
\]  
where the remaining terms are negligible. By applying Grönwall's lemma, we conclude that this quantity is bounded by \( \frac{c}{N} \).  
Substituting this result into the bound in \eqref{eq: second end}, we obtain  
\[
|\partial^2_{x_0^j} X_t^{i, x_0^j + \tau}| \le c(1_{\{i=j\}} + \frac{1}{N}) + c \int_0^t |\partial^2_{x_0^j} X_r^{i, x_0^j + \tau}| dr.  
\]  
Finally, another application of Grönwall's lemma yields the desired result.

\end{proof}

\addcontentsline{toc}{section}{References}


\begin{thebibliography}{4}

\bibitem{2poc} Albi, G. {it et al} (2019). Vehicular traffic, crowds, and swarms: From kinetic theory and multiscale methods to applications and research perspectives. Mathematical Models and Methods in Applied Sciences, 29(10), 1901-2005.

\bibitem{cov3} Ali, R. N., \& Sarkar, S. (2023). Impact of opinion dynamics on the public health damage inflicted by COVID-19 in the presence of societal heterogeneities. Frontiers in Digital Health, 5, 1146178.

\bibitem{ALN} Al\`os, E., Le\'on, J., Nualart, D. (2001). Stochastic Stratonovich calculus for fBm with Hurst parameter less than $1/2$.

\bibitem{Pol} Amorino, C., Belomestny, D., Pilipauskaitė, V., Podolski, M., \& Zhou, S. Y. (2024). Polynomial rates via deconvolution for nonparametric estimation in McKean-Vlasov SDEs. Probability Theory and Related Fields.

\bibitem{Amo23} Amorino, C., Heidari, A., Pilipauskaitė, V., \& Podolskij, M. (2023). Parameter estimation of discretely observed interacting particle systems. Stochastic Processes and their Applications, 163, 350-386.

\bibitem{AmoPil} Amorino, C., \& Pilipauskaitė, V. (2024). Kinetic interacting particle system: parameter estimation from complete and partial discrete observations. arXiv preprint arXiv:2410.10226.

\bibitem{cov1} Ancona, C., Iudice, F. L., Garofalo, F., \& De Lellis, P. (2022). A model-based opinion dynamics approach to tackle vaccine hesitancy. Scientific reports, 12(1), 11835.

\bibitem{BCD21} Bailleul, I., Catellier, R., \& Delarue, F. (2021). Propagation of chaos for mean field rough differential equations. Annals of Probability, 49(2), 944-996.

\bibitem{BCD20} Bailleul, I., Catellier, R., \& Delarue, F. (2020). Solving mean field rough differential equations. Electron. J. Probab, 25(21), 1-51.

\bibitem{Vyt} Belomestny, D., Pilipauskaitė, V., \& Podolskij, M. (2023). Semiparametric estimation of McKean–Vlasov SDEs. Annales de l'Institut Henri Poincar\'e, Probabilit\'es et Statistiques, 59(1), 79-96. 

\bibitem{Shiwi} Belomestny, D., Podolskij, M., \& Zhou, S. Y. (2024). On nonparametric estimation of the interaction function in particle system models. arXiv preprint arXiv:2402.14419.

\bibitem{Bercu} Bercu, B., Coutin, L., \& Savy, N. (2011). Sharp large deviations for the fractional Ornstein–Uhlenbeck process. Theory of Probability \& Its Applications, 55(4), 575-610.

\bibitem{11Est} Bergstrom, A.R. (1990) Continuous Time Econometric Modeling, Oxford University Press, Oxford.

\bibitem{Biagini} Biagini, F., Hu, Y., Øksendal, B., \& Zhang, T. (2008). Stochastic calculus for fractional Brownian motion and applications. Springer Science \& Business Media.

\bibitem{Bis} Bishwal, J. P. N. (2011). Estimation in interacting diffusions: Continuous and discrete sampling. Applied Mathematics, 2(9), 1154-1158.

\bibitem{24poc} Boltzmann, L. (1970). Weitere studien über das wärmegleichgewicht unter gasmolekülen. Kinetische Theorie II, 115-225.

{\change \bibitem{Bossy2022} Bossy, M., Jabir, J.-F., \& Martínez Rodríguez, K., Instantaneous turbulent kinetic energy modelling based on lagrangian stochastic approach in cfd and application to wind energy, Journal of Computational Physics, 464 (2022),
p. 110929.}

\bibitem{Cag04} Çağlar, M. (2004). A long-range dependent workload model for packet data traffic. Mathematics of Operations Research, 29(1), 92-105.

\bibitem{Car10} Cardaliaguet, P. (2010). Notes on mean field games (p. 120). Technical report.

\bibitem{30poc} Cardaliaguet, P., Delarue, F., Lasry, J. M., \& Lions, P. L. (2019). The master equation and the convergence problem in mean field games. Princeton University Press 

\bibitem{CarDel15} Carmona, R., \& Delarue, F. (2015). Forward–backward stochastic differential equations and controlled McKean–Vlasov dynamics. The Annals of Probability, 2647-2700.

\bibitem{CarDel18} Carmona, R., \& Delarue, F. (2018). Probabilistic theory of mean field games with applications I-II. Berlin: Springer Nature.

\bibitem{39poc} Carrillo, J. A., Jin, S., Li, L., \& Zhu, Y. (2021). A consensus-based global optimization method for high dimensional machine learning problems. ESAIM: Control, Optimisation and Calculus of Variations, 27, S5.

\bibitem{poc} Chaintron, L. P., \& Diez, A. (2022). Propagation of chaos: a review of models, methods and applications. I. Models and methods. arXiv preprint arXiv:2203.00446.

\bibitem{poc2} Chaintron, L. P., \& Diez, A. (2021). Propagation of chaos: a review of models, methods and applications. II. Applications. arXiv preprint arXiv:2106.14812.

\bibitem{op1} Chazelle, B., Jiu, Q., Li, Q., \& Wang, C. (2017). Well-posedness of the limiting equation of a noisy consensus model in opinion dynamics. Journal of Differential Equations, 263(1), 365-397.

\bibitem{47poc} Chizat, L., \& Bach, F. (2018). On the global convergence of gradient descent for over-parameterized models using optimal transport. Advances in neural information processing systems, 31.

\bibitem{Chr12} Chronopoulou, A., \& Viens, F. G. (2012). Estimation and pricing under long-memory stochastic volatility. Annals of finance, 8(2-3), 379-403.


\bibitem{Coeur} Coeurjolly, J.-F. (2001). Estimating the parameters of a fractional Brownian motion by discrete variations of its sample paths. Statistical Inference for stochastic processes, 4, 199-227.

\bibitem{CDFM20} Coghi, M., Deuschel, J. D., Friz, P. K., \& Maurelli, M. (2020). Pathwise McKean–Vlasov theory with additive noise. Annals of applied probability, 30(5), 2355-2392.

\bibitem{Com12} Comte, F., Coutin, L., \& Renault, E. (2012). Affine fractional stochastic volatility models. Annals of Finance, 8, 337-378.


\bibitem{ComMar19} Comte, F., \& Marie, N. (2019). Nonparametric estimation in fractional SDE. Statistical Inference for Stochastic Processes, 22, 359-382.

{\change \bibitem{CM} Comte, F., \& Marie, N. (2021). Nonparametric estimation for IID paths of fractional SDE. Statistical Inference for Stochastic Processes, 24(3), 669-705.}

\bibitem{22Wang} D'Argenio, D. Z., \& Park, K. (1997). Uncertain pharmacokinetic/pharmacodynamic systems: design, estimation and control. Control Engineering Practice, 5(12), 1707-1716.

\bibitem{56poc} De Bortoli, V., Durmus, A., Fontaine, X., \& Simsekli, U. (2020). Quantitative propagation of chaos for SGD in wide neural networks. Advances in Neural Information Processing Systems, 33, 278-288.

\bibitem{59poc} Degond, P. (2018). Mathematical models of collective dynamics and self-organization. In Proceedings of the International Congress of Mathematicians: Rio de Janeiro 2018 (pp. 3925-3946).

\bibitem{65poc} Del Moral, P. (2013). Mean field simulation for Monte Carlo integration. Monographs on Statistics and Applied Probability, 126, 26.


\bibitem{DelHof} Della Maestra, L., \& Hoffmann, M. (2022). Nonparametric estimation for interacting particle systems: McKean–Vlasov models. Probability Theory and Related Fields, 182(1), 551-613.

\bibitem{Hof2}Della Maestra, L., \& Hoffmann, M. (2023). The LAN property for McKean–Vlasov models in a mean-field regime. Stochastic Processes and their Applications, 155, 109-146.


\bibitem{21Wang} Donnet, S., \& Samson, A. (2013). A review on estimation of stochastic differential equations for pharmacokinetic/pharmacodynamic models. Advanced drug delivery reviews, 65(7), 929-939.

\bibitem{DosWil} Dos Reis, G. D., \& Wilde, Z. (2023). Malliavin differentiability of McKean-Vlasov SDEs with locally Lipschitz coefficients. arXiv preprint arXiv:2310.13400.

\bibitem{Duncan2000} Duncan, T. E., Hu, Y., \& Pasik-Duncan, B. (2000). Stochastic calculus for fractional Brownian motion I. Theory. SIAM Journal on Control and Optimization, 38(2), 582-612.

\bibitem{Earthquakes} Eftaxias, K., et al. (2008). Evidence of fractional-Brownian-motion-type asperity model for earthquake generation in candidate pre-seismic electromagnetic emissions. Natural Hazards and Earth System Sciences 8.4, 657-669.


\bibitem{FanHuang} Fan, X., Huang, X., Suo, Y., \& Yuan, C. (2022). Distribution dependent SDEs driven by fractional Brownian motions. Stochastic Processes and their Applications, 151, 23-67.

\bibitem{Fle11} Fleming, J., \& Kirby, C. (2011). Long memory in volatility and trading volume. Journal of Banking \& Finance, 35(7), 1714-1726.

\bibitem{FouGui} Fournier, N., \& Guillin, A. (2015). On the rate of convergence in Wasserstein distance of the empirical measure. Probability theory and related fields, 162(3), 707-738.

{\change \bibitem{Franzke2015}  Franzke, C. L. E., O’Kane, T. J., Berner, J., Williams, P.D., \& Lucarini, V., Stochastic climate theory and modeling, WIREs Climate Change, 6 (2015), pp. 63–78.}

\bibitem{GaiImkSheTud} Gairing, J., Imkeller, P., Shevchenko, R., \& Tudor, C.A. (2020). Hurst index estimation in stochastic differential equations driven by fractional Brownian motion. Journal of theoretical probability, 33, 1691-1714.

\bibitem{Galeati} Galeati, L., Lê, K., \& Mayorcas, A. (2024). Quantitative Propagation of Chaos for Singular Interacting Particle Systems Driven by Fractional Brownian Motion. arXiv preprint arXiv:2403.05454.

\bibitem{GenLar1} Genon-Catalot, V., \& Larédo, C. (2021). Parametric inference for small variance and long time horizon McKean-Vlasov diffusion models. Electronic Journal of Statistics, 15(2), 5811-5854.

\bibitem{GenLar2} Genon-Catalot, V., \& Lar\'edo, C. (2021). Probabilistic properties and parametric inference of small variance nonlinear self-stabilizing stochastic differential equations. Stochastic Processes and their Applications, 142, 513-548.

\bibitem{op2} Goddard, B. D., Gooding, B., Short, H., \& Pavliotis, G. A. (2022). Noisy bounded confidence models for opinion dynamics: the effect of boundary conditions on phase transitions. IMA Journal of Applied Mathematics, 87(1), 80-110.

\bibitem{Giesecke} Giesecke, K., Schwenkler, G., \& Sirignano, J. (2019). Inference for large financial systems. Mathematical Finance, 1-44.

\bibitem{Gra04} Granger, C. W., \& Hyung, N. (2004). Occasional structural breaks and long memory with an application to the S\&P 500 absolute stock returns. Journal of empirical finance, 11(3), 399-421.

\bibitem{90poc} Grassi, S., \& Pareschi, L. (2021). From particle swarm optimization to consensus based optimization: stochastic modeling and mean-field limit. Mathematical Models and Methods in Applied Sciences, 31(08), 1625-1657.

\bibitem{HarHu21} Haress, E. M., \& Hu, Y. (2021). Estimation of all parameters in the fractional Ornstein–Uhlenbeck model under discrete observations. Statistical Inference for Stochastic Processes, 24, 327-351.

\bibitem{HarRic23} Haress, E. M., \& Richard, A. (2024). Estimation of several parameters in discretely-observed Stochastic Differential Equations with additive fractional noise. Statistical Inference for Stochastic Processes, 1-51.

\bibitem{29Est} Holden, A.V. (1976) Models for Stochastic Activity of Neurones, Springer-Verlag, New York. Berlin/Boston.

{\change \bibitem{House2025} House, J., Janu\v{s}onis, S., Metzler, R., \& Vojta, T. (2025). Fractional Brownian motion with mean-density interaction. arXiv preprint arXiv:2503.15255.}

\bibitem{HuNua10} Hu, Y., \& Nualart, D. (2010). Parameter estimation for fractional Ornstein–Uhlenbeck processes. Statistics \& probability letters, 80(11-12), 1030-1038.

\bibitem{HuNuaZho SISP} Hu, Y., Nualart, D., \& Zhou, H. (2019). Parameter estimation for fractional Ornstein–Uhlenbeck processes of general Hurst parameter. Statistical Inference for Stochastic Processes, 22, 111-142.

\bibitem{HuNuaZho19} Hu, Y., Nualart, D., \& Zhou, H. (2019). Drift parameter estimation for nonlinear stochastic differential equations driven by fractional Brownian motion. Stochastics, 91(8), 1067-1091.

{\change \bibitem{HuRam} Hu, K., Ramanan, K., \& Salkeld, W. (2024). A Mimicking Theorem for processes driven by fractional Brownian motion. arXiv preprint arXiv:2405.08803. }

\bibitem{Hurst} Hurst, H. E. (1951). Long-term storage capacity of reservoirs. Transactions of the American society of civil engineers, 116(1), 770-799.

\bibitem{IstLan} Istas, J., Lang, G. (1997). Quadratic variations and estimation of the local Hölder index of a Gaussian process. Annales de l'Institut Henri Poincare (B) probability and statistics, 33(4), 407-436.

\bibitem{104poc} Kac, M. (1956, January). Foundations of kinetic theory. In Proceedings of The third Berkeley symposium on mathematical statistics and probability (Vol. 3, No. 600, pp. 171-197).

{\change \bibitem{KorShe} Korolev, V. Y., \& Shevtsova, I. G. (2010). On the upper bound for the absolute constant in the Berry–Esseen inequality. Theory of Probability \& Its Applications, 54(4), 638-658.}

\bibitem{Kas} Kasonga, R.A. (1990). Maximum likelihood theory for large interacting systems. SIAM Journal on Applied Mathematics, 50(3), 865-875.

\bibitem{KleLeB02} Kleptsyna, M. L., \& Le Breton, A. (2002). Statistical analysis of the fractional Ornstein–Uhlenbeck type process. Statistical Inference for stochastic processes, 5, 229-248.

\bibitem{36Est} Kushner, H.J. (1967) Stochastic Stability and Control, Academic Press, New York.


\bibitem{Lac23} Lacker, D. (2023). Hierarchies, entropy, and quantitative propagation of chaos for mean field diffusions. Probability and Mathematical Physics, 4(2), 377-432.

\bibitem{Lai00} Lai, D., Davis, B. R., \& Hardy, R. J. (2000). Fractional Brownian motion and clinical trials. Journal of Applied Statistics, 27(1), 103-108.

{\change \bibitem{Lilly2017} Lilly, J.M., Sykulski, A.M., Early, J.J., \& Olhede, S.C., Fractional brownian motion, the matérn process, and stochastic modeling of turbulent dispersion, Nonlinear Processes in Geophysics, 24 (2017), pp. 481–514.}

{\change \bibitem{LAN Eul} Liu, Y., Nualart, E., \& Tindel, S. (2015). LAN property for stochastic differential equations with additive fractional noise and continuous time observation. arXiv preprint arXiv:1509.00003.}

\bibitem{Liu} Liu, M., \& Qiao, H. (2022). Parameter estimation of path-dependent McKean-Vlasov stochastic differential equations.
Acta Mathematica Scientia 42, 876-886. 

\bibitem{Mar23} Marie, N. (2024). On a computable Skorokhod's integral‐based estimator of the drift parameter in fractional SDE. Scandinavian Journal of Statistics.


\bibitem{60imp} McKean, H. P. (1966). A class of Markov processes associated with nonlinear parabolic equations. Proceedings of the
National Academy of Sciences of the United States of America, 56(6), 1907-1911.

\bibitem{61imp} McKean, H. P. (1967). Propagation of chaos for a class of non-linear parabolic equations. Stochastic Differential Equations (Lecture Series in Differential Equations, Session 7, Catholic Univ., 1967), 41-57.

\bibitem{cov2} Müller, J., Tellier, A., \& Kurschilgen, M. (2022). Echo chambers and opinion dynamics explain the occurrence of vaccination hesitancy. Royal Society Open Science, 9(10), 220367.

\bibitem{132poc} Muntean, A., \& Toschi, F. (Eds.). (2014). Collective dynamics from bacteria to crowds: an excursion through modeling, analysis and simulation (Vol. 553). Springer Science \& Business Media.

\bibitem{134poc} Naldi, G., Pareschi, L., \& Toscani, G. (2010). Mathematical modeling of collective behavior in socio-economic and life sciences. Springer Science \& Business Media.

\bibitem{NeuTin} Neuenkirch, A., \& Tindel, S. (2014). A least square-type procedure for parameter estimation in stochastic differential equations with additive fractional noise. Statistical Inference for Stochastic Processes, 17, 99-120.

\bibitem{Richard} Nickl, R., Pavliotis, G. A., \& Ray, K. (2024). Bayesian Nonparametric Inference in McKean-Vlasov models. arXiv preprint arXiv:2404.16742.

\bibitem{Nor95} Norros, I. (1995). On the use of fractional Brownian motion in the theory of connectionless networks. IEEE Journal on selected areas in communications, 13(6), 953-962.

{\change \bibitem{Ivan fBm} Nourdin, I. (2012). Selected aspects of fractional Brownian motion (Vol. 4). Milan: Springer.}

\bibitem{NPBook} Nourdin, I., Peccati, G. (2012). Normal Approximations with Malliavin Calculus.

\bibitem{Nua} Nualart, D. (2006). Stochastic calculus with respect to fractional Brownian motion.

\bibitem{NuaBook} Nualart, D. Malliavin Calculus and Related Topics.

\bibitem{PTV20} Panloup, F., Tindel, S., \& Varvenne, M. (2020). A general drift estimation procedure for stochastic differential equations with additive fractional noise. Electronic Journal of Statistics, 14, 1075-1136.

\bibitem{44Est} Papanicolaou, G. C. (1995) Diffusion in random media, Surveys in applied mathematics. 201-253.

{\change \bibitem{Rai} Raič, M. (2019). A multivariate Berry–Esseen theorem with explicit constants. Bernoulli, 25(4A), 2824-2853.}

\bibitem{23Wang} Ramanathan, M. (1999). An application of Ito's lemma in population pharmacokinetics and pharmacodynamics. Pharmaceutical research, 16(4), 584.

\bibitem{24Wang} Ramanathan, M. (1999). A method for estimating pharmacokinetic risks of concentration-dependent drug interactions from preclinical data. Drug Metabolism and disposition, 27(12), 1479-1487.

\bibitem{46Est} Ricciardi, L.M. (1977) Diffusion Processes and Related Topics in Biology, Lecture Notes in Biomathematics, Springer, New York.

\bibitem{Ros09} Rostek, S. (2009). Option pricing in fractional Brownian markets (Vol. 622). Springer Science \& Business Media.

\bibitem{142poc} Rotskoff, G., \& Vanden‐Eijnden, E. (2022). Trainability and accuracy of artificial neural networks: An interacting particle system approach. Communications on Pure and Applied Mathematics, 75(9), 1889-1935.

\bibitem{Imp} Sharrock, L., Kantas, N., Parpas, P., \& Pavliotis, G. A. (2021). Parameter estimation for the McKean-Vlasov stochastic differential equation. arXiv preprint arXiv:2106.13751.

\bibitem{147poc} Sirignano, J., \& Spiliopoulos, K. (2020). Mean field analysis of neural networks: A law of large numbers. SIAM Journal on Applied Mathematics, 80(2), 725-752.

\bibitem{79imp} Sznitman, A.-S. (1991). Topics in propagation of chaos. In Ecole d'et\'e de probabilit\'es de Saint-Flour XIX-1989 (pp. 165-251). Springer, Berlin, Heidelberg.

\bibitem{Tan84} Tanaka, H. (1984). Limit theorems for certain diffusion processes with interaction. In North-Holland Mathematical Library (Vol. 32, pp. 469-488). Elsevier.

\bibitem{155poc} Totzeck, C. (2021). Trends in consensus-based optimization. In Active Particles, Volume 3: Advances in Theory, Models, and Applications (pp. 201-226). Cham: Springer International Publishing.

\bibitem{TudVie07} Tudor, C. A., \& Viens, F. G. (2007). Statistical aspects of the fractional stochastic calculus. The Annals of Statistics, 35(3), 1183.

\bibitem{Wang} Wang, Z., Luo, J., Fu, G., Wang, Z., \& Wu, R. (2013). Stochastic modeling of systems mapping in pharmacogenomics. Advanced drug delivery reviews, 65(7), 912-917.

\bibitem{Wen} Wen, J., Wang, X., Mao, S., \& Xiao, X. (2016). Maximum likelihood estimation of McKean-Vlasov stochastic differential equation and its application. Applied Mathematics and Computation, 274, 237-246.

\bibitem{Will} Willinger, W., Taqqu, M. S., Leland, W. E., \& Wilson, D. V. (1995). Self-similarity in high-speed packet traffic: analysis and modeling of Ethernet traffic measurements. Statistical science, 67-85.

\bibitem{Yoshida} Yoshida, N. (1992). Estimation for diffusion processes from discrete observation. Journal of Multivariate Analysis, 41(2), 220-242.


\end{thebibliography}
\end{document}